\newcommand\acc{\operatorname{Acc}}
\newcommand\omc{\overline{\mc}}
\newcommand\Sing{\operatorname{Sing}}
\newcommand\Diff{\operatorname{Diff}}
\newcommand{\klt}{\mathrm{klt}}
\newcommand\FF{\mathbb{F}}
\newcommand\kk{\bK^2}
\begin{document}
\title{On accumulation points of volumes of log surfaces}
\author{Valery Alexeev}
\address{Valery Alexeev\\Department of Mathematics\\ University of Georgia\\ Athens\\GA 30605\\ USA}
\email{valery@math.uga.edu}
\author{Wenfei Liu}
\address{Wenfei Liu \\School of Mathematical 
Sciences\\ Xiamen University\\Siming South Road 422\\Xiamen\\ Fujian 361005\\ P.~R.~China}
\email{wliu@xmu.edu.cn}
\subjclass[2010]{Primary 14J29; Secondary 14J26, 14R05}
\date{March 25, 2018}
\keywords{log canonical surfaces, volume, accumulation points}

\begin{abstract}
  Let $\mc\subset(0,1]$ be a set satisfying the descending chain
  condition. We show that any accumulation point of volumes of log
  canonical surfaces $(X, B)$ with coefficients in $\mc$ can be
  realized as the volume of a log canonical surface with big and nef
  $K_X+B$ and coefficients in $\overline\mc\cup\{1\}$, with at least
  one coefficient in $\acc(\mc)\cup\{1\}$.  As a corollary, if
  $\overline{\mc}\subset\bQ$ then all accumulation points of volumes
  are rational numbers, solving a conjecture of Blache.  For the
  set of standard coefficients
  $\cC_2=\{1-\frac{1}{n}\mid n\in\N\}\cup\{1\}$ we prove
  that the minimal accumulation point is between
  $\frac1{7^2\cdot 42^2}$ and $\frac1{42^2}$.
\end{abstract}
\maketitle
\tableofcontents

\section{Introduction}
Let $(X, B)$ be a projective log surface with log canonical 
singularities. The volume $\vol(K_X+B)$ measures the
asymptotic growth of the space of global sections of pluri-log-canonical
divisors:
\[
h^0(X, \lfloor m(K_X+B)\rfloor) = \frac{\vol(K_X+B)}{2} m^2 + \mathrm{o}(m^2)
\]
One says
that $K_X+B$ is big if $\vol(K_X+B)>0$.  If this is the case, let
$\pi\colon (X, B)\rightarrow (X_\can, B_\can)$ be the contraction to
the log canonical model. Then $\pi^*(K_{X_\can}+B_\can)$ is the
positive part of $K_X+B$ in the Zariski decomposition and
 $$\vol(K_X+ B)=\vol(K_{X_\can}+B_\can)=(K_{X_\can}+B_\can)^2.$$ The log canonical divisor $K_{X_\can}+B_\can$ is ample, but not Cartier in general. Hence $\vol(K_X+B)$, being positive, is not necessarily an integer. 
 
 Assume that $\mc\subset (0,1]$ and let $\ms(\mc)$ be the set of log
 canonical projective surfaces $(X, B)$ such that the coefficients of
 $B$ belong to $\mc$. Using the minimal model program for log
 surfaces, one has the following equalities of sets of volumes
\begin{align*}
\kk(\mc) :&=\{(\vol(K_X+B) \mid (X, B) \in\ms(\mc), K_X+B \text{ big} \}\\
&=\{(K_X+B)^2 \mid (X, B) \in\ms(\mc), K_X+B  \text{ big and nef} \}\\
& = \{(K_X+B)^2 \mid (X, B) \in\ms(\mc), K_X+B \text{ ample} \}
\end{align*}

The three most commonly used sets of coefficients are
$\mc_0=\emptyset$, $\mc_1 = \{1\}$, and
$\mc_2=\{1-\frac{1}{n}\mid n\in\N\}\cup\{1\}$. The latter set is
called the set of \emph{standard coefficients}. It naturally appears when one
studies groups of automorphisms of smooth varieties and in the
adjunction formula. An easy observation is that these three sets
satisfy the descending chain condition (DCC).
 
It is a fundamental result of the first author that if $\mc$ is a DCC
set then the set $\kk(\mc)$ is also DCC
(\cite[Thm.~8.2]{alexeev1994boundedness-and-ksp-2}).  The DCC is an
assertion about the accumulation points of $\kk(\mc)$.  Blache
\cite{blache1995example-concerning} and Koll\'ar
\cite[Theorem~39]{kollar2008is-there} have constructed several
examples of accumulation points of volumes of log surfaces without
boundary, and recently Urz\'ua and Y\'a\~nez
\cite{urzua2017notes-accumulation} exploited Koll\'ar's construction
to show that all natural numbers are actually accumulation points of
$\kk(\mc_0)$. Yet the overall picture of the accumulation points is
unknown even for $\mc_0$.

We recall the definition of an nklt center of $(X,B)$ in
Section~\ref{sec:preliminaries}. We call an nklt center
\emph{accessible}
if it is not a simple elliptic singularity $P\in X$ or
a smooth curve $B_0\subset B$ disjoint from $\Sing(X)$ and the rest of
$B$.  The first main result of this paper is the following.

\begin{thm}\label{thm: main}
  Suppose that $\mc\subset(0,1]$ satisfies the descending chain
  condition.  Then $v_\infty\in \RR_{>0}$ is an accumulation point of
  $\kk(\mc)$ if and only if there exists a log canonical surface 
  $(X, B)\in \ms(\overline\mc\cup\{1\})$ such that
\begin{enumerate} 
\item $K_X+B$ is ample, and $v_\infty=(K_X+B)^2$.
\item One of the following conditions is satisfied:
\begin{enumerate}
\item The set of codiscrepancies of divisors \emph{over} $X$ with
  respect to $(X, B)$ contains an accumulation point of $\mc$.
\item $(X,B)$ has an accessible nklt center. 
\end{enumerate}
\item If $1$ is not in the closure $\overline\cC$ of $\cC$  then each irreducible component of $\lfloor B\rfloor$ has geometric
  genus at most 1. 
\end{enumerate}
\end{thm}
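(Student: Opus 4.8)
The statement is an equivalence, whose two implications I would prove by quite different means. The elementary fact used throughout is that a DCC subset of $(0,1]$ is bounded below and that each of its accumulation points is the limit of a \emph{strictly increasing} sequence in it; so any codiscrepancy or coefficient lying in $\acc(\mc)$ can be approximated from below within $\mc$.

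\emph{Sufficiency.} Given $(X,B)\in\ms(\omc\cup\{1\})$ satisfying (1)--(3), I want a sequence in $\ms(\mc)$ whose volumes converge to $v_\infty=(K_X+B)^2$ but are not eventually equal to it. First replace each coefficient of $B$ lying in $\omc\setminus\mc$ --- necessarily an accumulation point of $\mc$ --- by a nearby smaller element of $\mc$: the boundary only shrinks, so the pair stays log canonical and $\vol$ moves continuously, which settles the case where this replacement is nonconstant. Otherwise I use (2). In case (2a) I pick a divisor $E$ over $X$ with codiscrepancy $c_\infty\in\acc(\mc)$, let $g\colon(X',B'+c_\infty E)\to(X,B)$ be its extraction (so $K_{X'}+B'+c_\infty E=g^*(K_X+B)$), and for $c_i\uparrow c_\infty$ form $(X',B'+c_iE)$; since $K_{X'}+B'+c_iE=g^*(K_X+B)-(c_\infty-c_i)E$ is positive along $E$ and, for $i\gg0$, nonnegative on the other curves, the log canonical model of $(X',B'+c_iE)$ lies in $\ms(\mc)$ and its volume is a nonconstant quadratic function of $c_i$ tending to $v_\infty$. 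In case (2b) an accessible nklt center is used to build a degenerating family: replacing it by longer and longer chains or cycles of curves, or opening it into a one-parameter family of klt pairs, produces pairs in $\ms(\mc)$ whose volumes converge to $v_\infty$; when $1\in\omc$ one may instead extract the codiscrepancy-$1$ exceptional curve of a blow-up of a singular or an intersection point of $\lfloor B\rfloor$ and let its coefficient increase to $1$. Conditions (2) and (3) are precisely what make these constructions available --- an inaccessible center, a simple elliptic singularity or an isolated smooth boundary curve, carries only moduli that fix the volume.

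\emph{Necessity.} Using the DCC of $\kk(\mc)$ \cite[Thm.~8.2]{alexeev1994boundedness-and-ksp-2}, choose log canonical models $(Y_j,D_j)\in\ms(\mc)$ with $K_{Y_j}+D_j$ ample and $v_j:=(K_{Y_j}+D_j)^2$ strictly increasing to $v_\infty$. These need \emph{not} be bounded: by Kollár's examples \cite{kollar2008is-there}, lc surfaces of bounded volume can carry arbitrarily long chains of curves over their non-klt points. The plan is to extract a limit anyway, by contracting these unbounded configurations to log canonical centers. Using the classification of lc surface singularities --- whose non-klt members are simple elliptic singularities and cusps, resolved by cycles and chains of rational curves governed by continued fractions --- together with surface MMP, after passing to a subsequence one produces a fixed log canonical surface $X_\infty$ and compatible birational contractions $(Y_j,D_j)\to(X_\infty,B_j)$ in which the bounded part of the data stabilizes, the coefficients of $B_j$ lie in $\mc$ and converge to limits in $\omc$, and the contracted loci carry the $j$-dependent unbounded part and limit to an nklt center of the resulting pair $(X_\infty,B_\infty)$ --- equivalently to components of $B_\infty$ of coefficient $1$. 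Such components, built from rational curves and at most one elliptic curve, have geometric genus $\le1$, which gives (3); and a bookkeeping of Zariski decompositions along the contractions, together with continuity of $\vol$ in the coefficients, squeezes $(K_{X_\infty}+B_\infty)^2$ between $\liminf v_j$ and $\limsup v_j$, so it equals $v_\infty$, giving (1). Finally, $(X_\infty,B_\infty)$ satisfies (2): if the coefficients of the $B_j$ do not all stabilize, some coefficient of $B_\infty$ --- necessarily the codiscrepancy of a divisor over $X_\infty$ --- lies in $\acc(\mc)$, which is (2a); otherwise they stabilize, the $(Y_j,D_j)$ are pairwise non-isomorphic since $v_j\ne v_\infty$, so the contractions genuinely vary, and the variation, being confined to the exceptional configurations, forces an \emph{accessible} nklt center on $(X_\infty,B_\infty)$. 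I expect the decisive difficulty to be precisely this limiting step in the absence of boundedness: making rigorous, and controlling, the ``limit of unbounded chains and cycles of curves over the non-klt locus'', verifying log canonicity and the coefficient-$1$ structure of the limit, and tracking Zariski decompositions finely enough to pin its volume to $v_\infty$.
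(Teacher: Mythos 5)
Your high-level strategy mirrors the paper's --- construct the limit surface for necessity, and perturb down for sufficiency --- and your handling of case (2a) of sufficiency is essentially the paper's argument. But there are two genuine gaps.

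First, in sufficiency you treat (2b) too loosely. ``Replacing the accessible nklt center by longer chains'' is not by itself a construction: the paper (Theorem~\ref{thm: nklt}) blows up the intersection point of the coefficient-$1$ curve $B_1$ with a second boundary component $B_2$ a total of $s$ times and then \emph{lowers the exceptional coefficients linearly} to $b_2(s-i)/s$, which is precisely what (a) makes all but one exceptional curve $K$-trivial and hence contracted on the log canonical model, (b) leaves the last exceptional curve with coefficient $0$ so the coefficient set does not grow, and (c) reduces the volume by exactly $b_2^2/s$. You also never address why the resulting models land in $\ms(\mc)$ rather than $\ms(\mc\cup\{1\})$ when $1\notin\omc$; the paper does this via hypothesis~(iii) and an adjunction argument: a coefficient-$1$ curve surviving on a surface with no accessible nklt locus would be a smooth curve in the smooth locus disjoint from the rest of $B$, hence of genus $\ge2$, contradicting~(iii), so $\lfloor B^{(s)}\rfloor=0$.

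Second, and more seriously, your necessity argument correctly identifies the difficulty --- extracting a limit from an unbounded family --- but leaves it open, whereas this is exactly where the paper brings in its central technical input. The resolution is \emph{not} to control unbounded chains over a limit of nklt points directly; it is Alexeev's boundedness theorem (\cite[Thm.~7.6, 8.5]{alexeev1994boundedness-and-ksp-2}, quoted here as Lemma~\ref{lem: key}): after contracting the minimal resolutions $Y^{(s)}\to X^{(s)}$ further down to a suitable $Z^{(s)}$, the pairs $(Z^{(s)},D^{(s)})$ form a bounded family, so one may fix $(Z,D)$ once and for all, with all $Y^{(s)}\to Z$ obtained by finitely-controlled blowups at nodes of $D$. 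From there the limit $B_Z=\lim B_Z^{(s)}$ exists on a fixed surface, and log canonicity and the volume $v_\infty=(K_Z+B_Z)^2$ follow from the monotonicity $(K_Z+B_Z^{(s)})^2\le(K_{Y^{(t)}}+B_{Y^{(t)}})^2$ furnished by the same theorem. You also omit the paper's reduction of the nklt case to the klt case by the coefficient perturbation $1\mapsto 1-\epsilon_s$ applied to a dlt blowup, a step one cannot avoid because the boundedness argument and the monotonicity $(vi)$ of Lemma~\ref{lem: key} are formulated in the klt setting.
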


\begin{rmk}
  An equivalent condition for (ii.a) is that there exists a log
  surface $(X,B)\in \cS(\overline\cC\cup 1)$ with big and nef $K_X+B$ and a
  coefficient of $B$ is in $\acc(\cC)$. 
\end{rmk}

In Section~\ref{sec: constr} we show the "if part" of
Theorem~\ref{thm: main} by constructing a sequence of log canonical
surfaces with volumes converging to $v_\infty$. The harder part of the
theorem is the "only if part". The proof is carried out in
Section~\ref{sec: limit surf}. 
There are several interesting consequences of Theorem~\ref{thm: main}.
\begin{cor}\label{cor: rat}
Let $\mc\subset(0,1]$ be a DCC set. Then $\acc(\kk(\mc))\subset
\kk(\overline \mc\cup\{1\})$, where $\acc(\cdot)$ denotes the set of
accumulation points. In particular, if $\omc\subset\bQ$ then
$\acc(\kk(\mc))\subset \bQ$.
\end{cor}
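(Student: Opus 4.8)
The plan is to read Corollary~\ref{cor: rat} off directly from Theorem~\ref{thm: main}, using only its ``only if'' direction, after one preliminary observation: every accumulation point of $\kk(\mc)$ is strictly positive. I would argue this as follows. Since $\kk(\mc)\subset\RR_{>0}$ by definition, an accumulation point lies in $\RR_{\ge 0}$; and it cannot be $0$, for a sequence in $\kk(\mc)$ tending to $0$ would contain a strictly decreasing subsequence, contradicting the theorem of the first author that $\kk(\mc)$ satisfies the DCC (\cite[Thm.~8.2]{alexeev1994boundedness-and-ksp-2}). Hence $\acc(\kk(\mc))\subset\RR_{>0}$, so Theorem~\ref{thm: main} applies at every such point.

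Then I would fix $v_\infty\in\acc(\kk(\mc))$ and invoke the ``only if'' part of Theorem~\ref{thm: main}: it provides a log canonical surface $(X,B)\in\ms(\overline{\mc}\cup\{1\})$ with $K_X+B$ ample and $(K_X+B)^2=v_\infty$ (conditions (ii) and (iii) of the theorem give extra information that plays no role here). Since $\kk(\overline{\mc}\cup\{1\})$ is, by the displayed identities above, precisely the set of self-intersections $(K_X+B)^2$ over $(X,B)\in\ms(\overline{\mc}\cup\{1\})$ with $K_X+B$ ample, this shows $v_\infty\in\kk(\overline{\mc}\cup\{1\})$, and hence $\acc(\kk(\mc))\subset\kk(\overline{\mc}\cup\{1\})$.

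For the second assertion I would assume $\overline{\mc}\subset\bQ$. Because $\mc$ is DCC it cannot accumulate at $0$, so $\overline{\mc}\cup\{1\}\subset(0,1]\cap\bQ$, and it is enough to check $\kk(\mathcal D)\subset\bQ$ for any $\mathcal D\subset\bQ$. Given $(X,B)\in\ms(\mathcal D)$ with $K_X+B$ ample, the pair being log canonical makes $K_X+B$ an $\RR$-Cartier divisor whose coefficients --- that of $K_X$ equal to $1$, those of $B$ lying in $\mathcal D$ --- are all rational, and an $\RR$-Cartier $\bQ$-divisor on a normal variety is automatically $\bQ$-Cartier (locally at a point the obstruction sits in the local class group $\mathrm{Cl}$, and $\mathrm{Cl}\otimes\bQ$ injects into $\mathrm{Cl}\otimes\RR$). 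Thus $K_X+B$ is a $\bQ$-Cartier $\bQ$-divisor, so $(K_X+B)^2\in\bQ$; hence $\kk(\overline{\mc}\cup\{1\})\subset\bQ$, and combining with the previous paragraph, $\acc(\kk(\mc))\subset\bQ$.

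I do not expect a genuine obstacle, since the whole substance of the statement already lives in Theorem~\ref{thm: main}. The two points that need a word of care are the exclusion of $0$ from $\acc(\kk(\mc))$ (which rests on the DCC of $\kk(\mc)$) and the $\bQ$-Cartierness of an ample log canonical divisor with rational coefficients; both are routine, the second being a standard linear-algebra fact about local class groups.
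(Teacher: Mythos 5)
Your proposal is correct and matches the paper's (implicit) reasoning: the paper presents Corollary~\ref{cor: rat} as an immediate consequence of Theorem~\ref{thm: main} with no further argument, and your write-up simply makes explicit the two routine points --- that $\acc(\kk(\mc))\subset\RR_{>0}$ (so the theorem applies) and that an ample $\RR$-Cartier log canonical divisor with rational coefficients on a normal surface is $\bQ$-Cartier, hence has rational self-intersection --- both of which are exactly what the reader is expected to supply.
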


\begin{rmk}
  Theorem~\ref{thm: main} gives an easy way to find many accumulation
  points of $\kk(\mc)$. E.g., it easily follows that
  $ \Z_{>0}\subset\acc(\kk(\mc_0))$, cf. Example~\ref{ex: Z}.

  Corollary~\ref{cor: rat} and Example~\ref{ex: Z} prove one half of a
  conjecture of Blache \cite[Conjecture
  3(a)]{blache1995example-concerning}.  On the other hand, the second
  half \cite[Conjecture 3(b)]{blache1995example-concerning} saying
  that $\min \acc(\kk(\mc_0))= 1$ is false. For example, the surface
  in \cite[Sec.5]{alexeev2016open-surfaces} of volume $\frac{1}{462}$
  has a non-empty accessible nklt locus as in Theorem~\ref{thm:
    main}(ii.b), and one infers that $\frac1{462}$ is an accumulation
  point of $\kk(\mc_0)$.

  The fact that $\Z_{>0}\subset\acc(\kk(\mc_0))$ and that there are
  many accumulation points smaller than one was also established in
  the recent work of Urz\'ua and Y\'a\~nez
  \cite{urzua2017notes-accumulation}.
\end{rmk}

\begin{cor}\label{cor: =}
  Let $\mc\subset(0,1]$ be a DCC set such that $\acc(\cC) = \{1\}$,
  e.g. $\cC=\cC_2$.  Then $\acc({\kk(\mc)})=\kk_\nklt(\overline \mc)$,
  where
  $$\kk_\nklt(\overline\mc) = \{(K_X+B)^2 \mid (X, B)
  \in\ms(\overline\mc),\  K_X+B \text{ ample},\  \nklt(X, B)\neq \emptyset
  \}.$$
  If $\cC$ is finite, e.g. $\cC=\cC_0$ or $\cC_1$,
  then $\acc({\kk(\mc)}) \subset \kk_\nklt(\mc)$.
\end{cor}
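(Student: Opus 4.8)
The plan is to read off both statements from Theorem~\ref{thm: main}, the only work being to translate conditions (ii) and (iii) under the hypotheses on $\cC$.

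Assume first $\acc(\cC)=\{1\}$. Since $\acc(\cC)\subset\overline\cC$ we have $1\in\overline\cC$, hence $\overline\cC\cup\{1\}=\overline\cC$ and condition~(iii) of Theorem~\ref{thm: main} is vacuous. The key point is that, under this hypothesis, condition~(ii) of the theorem --- the disjunction of (ii.a) and (ii.b) --- is equivalent to $\nklt(X,B)\neq\emptyset$. Indeed, a divisor over $X$ has codiscrepancy in $\acc(\cC)=\{1\}$ precisely when its log discrepancy with respect to $(X,B)$ is zero, i.e. when it is a non-klt place of $(X,B)$; such a place exists exactly when $\nklt(X,B)\neq\emptyset$, so (ii.a)$\,\Leftrightarrow\,\nklt(X,B)\neq\emptyset$. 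As (ii.b) trivially implies $\nklt(X,B)\neq\emptyset$ and is in turn implied by (ii.a), the disjunction of (ii.a) and (ii.b) is equivalent to $\nklt(X,B)\neq\emptyset$. Feeding this back into Theorem~\ref{thm: main}, the statement becomes: $v_\infty$ is an accumulation point of $\kk(\cC)$ if and only if there is $(X,B)\in\ms(\overline\cC)$ with $K_X+B$ ample, $\nklt(X,B)\neq\emptyset$ and $v_\infty=(K_X+B)^2$; that is, $\acc(\kk(\cC))=\kk_\nklt(\overline\cC)$.

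Now suppose $\cC$ is finite, so $\acc(\cC)=\emptyset$ and $\overline\cC=\cC$. Then (ii.a) can never be satisfied, so condition~(ii) forces $(X,B)$ to have an accessible nklt center, hence in particular $\nklt(X,B)\neq\emptyset$. If $1\in\cC$ (for instance $\cC=\cC_1$) then $\overline\cC\cup\{1\}=\cC$ and Theorem~\ref{thm: main} already gives $(X,B)\in\ms(\cC)$ with $K_X+B$ ample, $\nklt(X,B)\neq\emptyset$ and $(K_X+B)^2=v_\infty$, so $v_\infty\in\kk_\nklt(\cC)$. If $1\notin\cC$ (for instance $\cC=\cC_0$), condition~(iii) tells us every component of $\lfloor B\rfloor$ has geometric genus at most $1$, and I would then have to replace $(X,B)$ by a birational model in which the reduced part $\lfloor B\rfloor$ is absorbed into the singular locus, producing a surface with coefficients in $\cC$ (with empty boundary when $\cC=\cC_0$), non-empty nklt locus, and the same volume.

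I expect this last step to be the main obstacle. Since $K_X+B$ is ample, $\lfloor B\rfloor$ cannot be contracted on $X$ itself, so one first passes to a partial resolution (or to a model crepant for $K_X+B$) on which a negative-definite configuration containing the transform of $\lfloor B\rfloor$ can be contracted to non-klt points; the genus bound and the presence of an accessible nklt center are what should make such a model and contraction available, while crepancy keeps the volume equal to $v_\infty$. Verifying that the contracted surface is log canonical, non-klt and (for $\cC=\cC_0$) boundary-free is the technical core of the finite case; by contrast the $\acc(\cC)=\{1\}$ statement is a direct reformulation of Theorem~\ref{thm: main}.
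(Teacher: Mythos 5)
Your treatment of the case $\acc(\cC)=\{1\}$ is correct and is exactly the intended reading of Theorem~\ref{thm: main}: since $\acc(\cC)\subset\overline\cC$, one has $1\in\overline\cC$, so $\overline\cC\cup\{1\}=\overline\cC$ and condition~(iii) is vacuous; and over a log canonical pair a divisor with codiscrepancy~$1$ is the same thing as an nklt place, so the disjunction (ii.a)$\,\vee\,$(ii.b) collapses to $\nklt(X,B)\neq\emptyset$. That gives both inclusions, hence the stated equality. Your handling of the finite case with $1\in\cC$ (i.e.\ $\cC_1$) is likewise a direct read-off from the theorem.

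The gap is where you suspect it is, and it is more serious than you suggest. For $\cC_0=\emptyset$, Theorem~\ref{thm: main} only hands you a pair $(X,B)\in\ms(\{1\})$ with $B$ a (possibly nonzero) reduced divisor of geometric genus $\le 1$ in each component, accessible nklt center, and $(K_X+B)^2=v_\infty$; but $\kk_\nklt(\cC_0)$ as defined requires $B=0$. Your proposed fix --- pass to a model crepant for $K_X+B$ and contract a negative-definite configuration containing the transform of $\lfloor B\rfloor$ --- cannot succeed: on any crepant model $f\colon Y\to X$ one has $f^*(K_X+B)\cdot f_*^{-1}B_0=(K_X+B)\cdot B_0>0$ for every component $B_0\subset\lfloor B\rfloor$ by ampleness, so $f_*^{-1}B_0$ is never in the null locus of $f^*(K_X+B)$ and no volume-preserving contraction can kill it. So the "absorb $\lfloor B\rfloor$ into singularities" step is not available, and the claim $\acc(\kk(\cC_0))\subset\kk_\nklt(\cC_0)$ does not follow from Theorem~\ref{thm: main} by the route you outline. (What does follow immediately is the weaker $\acc(\kk(\cC_0))\subset\kk_\nklt(\cC_0\cup\{1\})$, and you should flag explicitly that the stronger statement for $\cC_0$ needs a separate argument or a different reading of the notation $\kk_\nklt(\cC_0)$.)
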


Our second main result is an explicit lower bound for
$\acc(\kk(\mc_2))$ for the standard coefficient set
$\cC_2 = \{1-\frac1{n}, \ n\in\bN\} \cup \{1\}$. In Section~\ref{sec:
  low bnd} we prove:
\begin{thm}\label{thm:bounds12}
  One has 
  $\min \acc(\cC_2) =
  \min \kk_\nklt(\mc_2)\geq \frac{1}{86436} = \frac{1}{7^2\cdot42^2}$. 
\end{thm}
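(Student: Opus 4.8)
The plan is to combine Corollary~\ref{cor: =} with an \emph{a priori} lower bound for the volume of a log canonical surface whose non-klt locus is nonempty. Since $\acc(\cC_2)=\{1\}$ and $\overline{\cC_2}=\cC_2$, Corollary~\ref{cor: =} gives $\acc(\kk(\cC_2))=\kk_\nklt(\cC_2)$. This set is contained in the DCC set $\kk(\cC_2)$, and it is nonempty: for four general lines in $\mathbb P^2$ the pair $\bigl(\mathbb P^2,\,L_0+\tfrac12 L_1+\tfrac23 L_2+\tfrac67 L_3\bigr)$ is log canonical with $K_X+B=\tfrac1{42}H$ ample and $\lfloor B\rfloor=L_0\neq 0$, so it realizes the value $\tfrac1{42^2}$. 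Hence $\kk_\nklt(\cC_2)$ has a minimum, the two minima in the statement coincide, and it remains to prove that every $(X,B)$ with coefficients in $\cC_2$, with $K_X+B$ ample and $\nklt(X,B)\neq\emptyset$, satisfies $(K_X+B)^2\ge\tfrac1{7^2\cdot 42^2}$; since only a minimiser is at issue, I may and will use that $(X,B)$ is extremal in $\kk_\nklt(\cC_2)$.

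The first step is to extract from the non-klt locus a curve carrying an adjunction formula. If $\lfloor B\rfloor\neq 0$ I take an irreducible component $C\subseteq\lfloor B\rfloor$; if $\lfloor B\rfloor=0$ the non-klt locus is a point — a cusp, a simple elliptic singularity, or an \'etale quotient of one of these — and after passing to a suitable partial resolution one finds in the exceptional configuration a curve $C$ to which the computation below still applies (on such a model $K_X+B$ is only nef and big, but the contracted locus is negative definite, which is what is needed afterwards). I henceforth describe the case $\lfloor B\rfloor\neq0$. Adjunction gives $(K_X+B)|_C=K_C+\Diff_C(B-C)$, and since $B$ has standard coefficients so does the different (in dimension two the standard-coefficient property of the different is elementary). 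As $K_X+B$ is ample, $\deg\bigl(K_C+\Diff_C(B-C)\bigr)>0$; an elementary computation shows that the smallest positive value of $\deg\bigl(K_{\mathbb P^1}+\sum_i\tfrac{m_i-1}{m_i}p_i\bigr)$ is $-2+\tfrac12+\tfrac23+\tfrac67=\tfrac1{42}$, attained only for the orders $\{2,3,7\}$, while positive genus forces the degree to be $\ge\tfrac12$. Therefore
\[
(K_X+B)\cdot C\ \ge\ \tfrac1{42},
\]
with equality only if $C\cong\mathbb P^1$ meets $\Sing(X)\cup(B-C)$ in exactly three points, at which $K_X+B$ has local indices $2$, $3$ and $7$.

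The main step — and the hard one — is to pass from this bound on a single intersection number to the claimed bound on the volume. One route is to control the Cartier index $N$ of $K_X+B$: since $N(K_X+B)$ is an ample Cartier divisor one has $(K_X+B)^2\ge 1/N^2$, so it suffices to show $N\mid 7\cdot 42=294$ for a minimiser. Adjunction already controls $N$ along $C$: where $\Diff_C(B-C)$ has denominator $m$ the local index of $K_X+B$ is $m$, so along $C$ the index is $\operatorname{lcm}(2,3,7)=42$ in the extremal situation. What must be bounded is the contribution of the singularities of $X$ lying off $C$ and of the remaining components of $B$, and this is where extremality is used: minimality of the volume together with ampleness rigidifies $(X,B)$ into a short list of configurations — essentially the orbifold $\mathbb P^1$-bundles and cones over a $(2,3,7)$-orbifold $\mathbb P^1$, allowing at most one further cyclic quotient singularity, of index dividing $7$ — and on each of these one verifies directly that $N\mid 294$, whence $(K_X+B)^2\ge 1/294^2=1/(7^2\cdot 42^2)$. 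The delicate point is exactly this classification: squeezing enough rigidity out of ``ample, minimal volume, with a non-klt curve'' to pin down the geometry transverse to $C$, the factor $7^2$ being precisely the room that ampleness leaves for an auxiliary index-$7$ singularity beyond the $(2,3,7)$ already forced along $C$.
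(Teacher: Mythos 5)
Your reduction via Corollary~\ref{cor: =} and the observation that $\kk_\nklt(\cC_2)$ is a nonempty subset of the DCC set $\kk(\cC_2)$, hence attains a minimum, both match the paper. But your lower-bound argument diverges from the paper's and has two genuine gaps.

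First, in the crucial case $\lfloor B\rfloor=0$ (an isolated nklt center $p$), your "first step" does not produce a curve $C$ with $(K_X+B)\cdot C\ge\tfrac1{42}$. On a dlt blowup $f\colon Y\to X$ the reduced exceptional curve $E$ lying over $p$ is $f$-exceptional, so $(K_Y+B_Y)\cdot E=f^*(K_X+B)\cdot E=0$, and adjunction yields $K_E+\Diff_E(\Delta)\equiv 0$ — a constraint, not a positive degree. The factor $7^2$ comes precisely from this case, and in the case $\lfloor B\rfloor\ne 0$ the bound $\tfrac1{42^2}$ already follows outright from Koll\'ar's Theorem~\ref{thm:v1}, so the adjunction inequality contributes nothing new there.

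Second, your "main step" — pinning down a minimiser well enough to show the \emph{global} Cartier index $N$ of $K_X+B$ divides $294$ — is asserted, not proved. Controlling singularities of $X$ away from $C$ purely from ampleness and minimality of volume is exactly the sort of classification the paper avoids. The paper's route (Prop.~\ref{prop: nonvan}, Lemma~\ref{lem:m-C2}, Thm.~\ref{thm: low bnd}) only needs a \emph{local} Cartier index at $p$: adjunction along $E$ forces $\sum(1-\tfrac1{n_j})=1$ or $2$, so some $m\in\{1,2,3,4,6\}$ works; Kawamata--Viehweg vanishing on the dlt model then gives $H^0(X,\lfloor m(K_X+B)\rfloor)\neq 0$; one builds an auxiliary divisor $D\ge B$ with $\lfloor D\rfloor$ big and bounded by $(1+mt_m)(K_X+B)$, passes to a model where $(X,D)$ becomes lc, and applies Koll\'ar's $v_1(\cC_2)=\tfrac1{42^2}$ to that. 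This sidesteps entirely the global classification your proposal requires. As written, your argument does not establish the inequality, and you should replace the second and third steps with something along these lines (or carry out the classification you invoke, which would be a substantial and nontrivial project).
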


On the other hand, $\frac{1}{42^2}$ is an accumulation point of
$\kk(\mc_2)$ by applying Theorem~\ref{thm: main}(ii.b) to
\cite[Example~5.3.1]{kollar1994log-surfaces}. Overall, our bounds for
the minimal accumulation points of the three most commonly used sets
are as follows:

\begin{thm}\label{thm:bounds}
  The following is true:
  \begin{enumerate}

    \smallskip\item For $\cC=\cC_0$, $\cC_1$ and $\cC_2$, ones has
    $\min \acc({\kk(\mc)})= \min\kk_\nklt(\mc)$.

  \medskip\item One has
    \(
    \frac{1}{86436} = \frac{1}{7^2\cdot42^2} \le
    \min \kk_\nklt(\mc_2) \le
    \frac{1}{1764} = \frac{1}{42^2}.
    \)
    
  \medskip\item For $\cC=\cC_0$ or $\cC_1$, one has
    \(
    \frac{1}{86436} = \frac{1}{7^2\cdot42^2} \le
    \min \kk_\nklt(\mc) \le
    \frac{1}{462} = \frac{1}{11\cdot 42}.
    \)    
  \end{enumerate}

\end{thm}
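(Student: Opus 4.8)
The plan is to deduce Theorem~\ref{thm:bounds} from Theorem~\ref{thm:bounds12}, Corollary~\ref{cor: =}, and two surfaces already in the literature. As a preliminary I record that $\kk(\cC)$ is DCC by \cite[Thm.~8.2]{alexeev1994boundedness-and-ksp-2}, hence so is its subset $\kk_\nklt(\cC)$, and that the set of accumulation points of any DCC subset of $\RR$ is again DCC (a strictly decreasing sequence in $\acc(S)$ would produce one in $S$, by extracting points of $S$ below each term); since all sets occurring in the statement are nonempty by the examples below, each of the four minima is attained.

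Parts (2), (3) and the case $\cC=\cC_2$ of part (1) are then formal. The equality $\min\acc(\kk(\cC_2))=\min\kk_\nklt(\cC_2)$ and the bound $\min\kk_\nklt(\cC_2)\ge\frac{1}{7^2\cdot42^2}$ are Theorem~\ref{thm:bounds12}; alternatively, since $\acc(\cC_2)=\{1\}$ and $\cC_2$ is closed, Corollary~\ref{cor: =} gives the set-equality $\acc(\kk(\cC_2))=\kk_\nklt(\overline{\cC_2})=\kk_\nklt(\cC_2)$. For the upper bound in (2), \cite[Example~5.3.1]{kollar1994log-surfaces} provides $(X,B)\in\ms(\cC_2)$ with $K_X+B$ ample, $(K_X+B)^2=\frac{1}{42^2}$ and a nonempty accessible nklt center, so $\frac{1}{42^2}\in\kk_\nklt(\cC_2)$. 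In (3), the inclusions $\cC_0\subset\cC_1\subset\cC_2$ give $\ms(\cC_0)\subset\ms(\cC_1)\subset\ms(\cC_2)$, hence $\kk_\nklt(\cC_0)\subset\kk_\nklt(\cC_1)\subset\kk_\nklt(\cC_2)$, so the lower bounds follow from (2); for the upper bound, the surface of \cite[Sec.~5]{alexeev2016open-surfaces} lies in $\ms(\cC_0)$, has volume $\frac{1}{462}$ and a nonempty accessible nklt center, whence $\frac{1}{462}\in\kk_\nklt(\cC_0)\subset\kk_\nklt(\cC_1)$.

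It remains to prove part (1) for $\cC=\cC_0$ and $\cC_1$. Corollary~\ref{cor: =} yields $\acc(\kk(\cC))\subset\kk_\nklt(\cC)$ and hence $\min\acc(\kk(\cC))\ge\min\kk_\nklt(\cC)$; for the reverse inequality, note that $\acc(\cC)=\emptyset$ for a finite set, so option (ii.a) of Theorem~\ref{thm: main} never applies, and it suffices to realize $v:=\min\kk_\nklt(\cC)$ by a log surface possessing an accessible nklt center. Choose $(X,B)$ realizing $v$; if one of its nklt centers is accessible we are done, so assume all of them are inaccessible, i.e.\ simple elliptic singularities of $X$ or, when $\cC=\cC_1$, smooth components $B_0$ of $\lfloor B\rfloor$ disjoint from $\Sing(X)$ and from $B-B_0$. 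The remedy is to replace $(X,B)$ by a log surface with the same (ample) log canonical class, the same volume, and an accessible center: a simple elliptic singularity is turned into a cusp by a $\bQ$-Gorenstein degeneration of its exceptional elliptic curve to an irreducible nodal rational curve of the same self-intersection, performed in an analytic neighbourhood of the point and glued to the constant deformation of the complement; a disjoint component $B_0$ of $\lfloor B\rfloor$ is likewise degenerated to a nodal curve. Flatness of the $\bQ$-Gorenstein family keeps $(K_X+B)^2$ equal to $v$, ampleness persists by openness, log canonicity is retained, and the new center (a cusp point, respectively a node of $B$) is accessible; hence $v\in\acc(\kk(\cC))$ by Theorem~\ref{thm: main}(ii.b), and part (1) follows. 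The main obstacle is precisely this modification: while the underlying elliptic-to-cusp degeneration is classical (one degenerates the exceptional elliptic curve to a nodal rational one), one must check that it can be carried out locally and patched in without disturbing the ample class $K_X+B$ or its self-intersection, and one must handle a rigid disjoint boundary component in the same way; once that is granted, the rest is bookkeeping with the inclusions above and the surfaces of volumes $\frac{1}{42^2}$ and $\frac{1}{462}$.
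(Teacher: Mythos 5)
Your handling of parts (2), (3), and the $\cC_2$ case of part (1) matches the paper exactly: Theorem~\ref{thm:bounds12} (or Corollary~\ref{cor: =} with $\overline{\cC_2}=\cC_2$) for the $\cC_2$ statements, the existence of minima from the DCC property, the inclusion $\kk_\nklt(\cC_0)\subset\kk_\nklt(\cC_1)\subset\kk_\nklt(\cC_2)$, and the two witness surfaces of volumes $\tfrac1{42^2}$ and $\tfrac1{462}$ from \cite{kollar1994log-surfaces} and \cite{alexeev2016open-surfaces}. All of that is fine.

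The gap is in part (1) for $\cC=\cC_0,\cC_1$. You correctly reduce to showing that the minimum of $\kk_\nklt(\cC)$ is realized by a surface with an \emph{accessible} nklt center, but you then propose to \emph{replace} an inaccessible minimizer by a $\bQ$-Gorenstein degeneration: a simple elliptic singularity degenerated locally to a cusp, or a rigid disjoint boundary component degenerated to a nodal curve, glued to the constant deformation of the complement. You yourself flag this as the ``main obstacle,'' and indeed it is not justified and would not survive scrutiny as written: (a) it is not true that every simple elliptic singularity (arbitrary $-E^2$) admits a cusp degeneration, and even when a local one exists there is no reason the local family patches to a global projective $\bQ$-Gorenstein family with the same ample $K_X+B$; (b) a disjoint component $B_0\subset\lfloor B\rfloor$ typically has $B_0^2<0$ and hence is rigid, so there is no nodal curve in its linear system to degenerate to. So the surgery step is a genuine hole, not a routine verification.

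The paper avoids surgery entirely and instead argues \emph{numerically} that an all-inaccessible minimizer cannot exist. If $\nklt(X,B)$ consists only of a simple elliptic point, then $(K_X+B)^2\ge\tfrac{1}{143}$ by \cite{Liu2017minimal-volume}; if it consists only of a smooth disjoint boundary component $B_0$, then $(K_X+B)B_0$ is a positive integer and the log-plurigenera estimate from \cite[Thm.\ 5.3]{kollar1994log-surfaces} gives $(K_X+B)^2\ge\delta_1(\cC_2)\cdot(K_X+B)B_0\ge\tfrac1{42}$. Both bounds exceed the known accumulation point $\tfrac1{462}$, so the minimizer of $\kk_\nklt(\cC)$ must have an accessible center, and Theorem~\ref{thm: main}(ii.b) finishes the argument. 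You should replace your degeneration step by this comparison, or else fill the deformation-theoretic gap with a genuine construction and a proof that it preserves the ample $\bR$-Cartier class, log canonicity, and coefficient set.
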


\begin{rmk}
  \cite[4.8]{alexeev2004bounding-singular} gives an effective lower
  bound for $\kk(\mc)$ for any DCC set $\mc$, which is however too
  small to be realistic.  For the sets $\mc_0,\mc_1,\mc_2$ it works
  out to about $10^{-3\cdot 10^{10}}$,
  cf.~\cite[Sec.10]{alexeev2016open-surfaces}.  Our bounds for
  $\acc(\kk(\mc_2))$ are considerably "larger".
\end{rmk}

In Section~\ref{sec:iterated} we discuss the iterated accumulation
points of $\bK^2(\cC)$, i.e. accumulation points of accumulation
points etc. In particular, we establish that its ``accumulation
complexity'' is unbounded already for the simplest set
$\cC_0=\emptyset$.

\begin{ack}
  The work of the first author was partially supported by NSF under
  DMS-1603604. He would like to thank Christopher Hacon and Chenyang
  Xu for useful discussions.
The second author was partially supported by the NSFC
  (No.~11501012, No.~11771294) and by the Recruitment Program for
  Young Professionals. He benefited from discussions with Professor
  Meng Chen during a visit to Fudan University in September
  2017. Thanks go to Giancarlo Urz\'ua for sending us the manuscript
  \cite{urzua2017notes-accumulation}.
\end{ack}

\section{Preliminaries}
\label{sec:preliminaries} 

Throughout the paper, we work over an algebraically closed field of
arbitrary characteristic.
We recall the standard definitions of the MMP, see
e.g. \cite{kollar1998birational-geometry}. 
We will only need them for surfaces.
Given an $\R$-divisor $B=\sum_j b_j B_j$ on a normal surface $X$ and  a real number $a$ we write
\begin{displaymath}
  B^{= a} = \sum_{b_j= a} b_jB_j, \quad
  B^{>a} = \sum_{b_j>a} b_jB_j,\quad
  B^{<a} = \sum_{b_j<a} b_jB_j.   
\end{displaymath}

A log surface consists of a normal projective surface $X$ and
effective $\R$-divisor~$B$
such that $K_X+B$ is $\R$-Cartier.  Let
$f\colon Y\to X$ be a morphism from a normal surface $Y$.  Then we
frequently denote by $B_Y$ the $\bR$-divisor on $Y$ defined by the
identity $K_Y+B_Y = f^*(K_X+B)$.

A \emph{log resolution} is a resolution of singularities
$f\colon Y\rightarrow X$ such that $\exc(f)\cup f^{-1}_*B$ has a
simple normal crossing support, where $\exc(f)$ is the exceptional
locus of $f$ and $f^{-1}_*B$ is the strict transform of $B$ on Y. We
can write
\[
K_Y = f^*(K_X+B) +\sum_i a_iE_i.
\]
One usually writes $a_i=a(E_i, X,B)$ and calls it the \emph{discrepancy}
of $E_i$ with respect to $(X,B)$. One says that $(X, B)$ is \emph{log
  canonical} (resp.~\emph{Kawamata log terminal}) if $a_i\geq -1$
(resp.~$a_i>-1$) for every $i$. The numbers
$b(E_i, X, B):=-a(E_i, X, B)$ are called \emph{codiscrepancies}. We
use abbreviations \emph{lc} and $\emph{klt}$ for log canonical and
Kawamata log terminal respectively.
Thus, $(X,B)$ is lc, resp. klt, if the
coefficients in $B_Y$ are $\le1$, resp. are~$<1$.

A \emph{nonklt}, or \emph{nklt} center of a log surface $(X,B)$ is an image
of a divisor $E$ on $Y$ whose coefficient in $B_Y$ is $\ge1$,
i.e. $=1$ in the log canonical case. In other words, $E$ is either a
component of $B=\sum b_jB_j$ with $b_j\ge 1$ or an image of an
exceptional divisor with  $a(E, X,B)\leq -1$. This definition does not
depend on a log resolution.
The \emph{nklt locus} of $(X,B)$, denoted by $\nklt(X, B)$,
is the union of all nklt centers.

A log surface $(X, B)$ is \emph{dlt} if it is log canonical, and there exists
a finite collection of points $S\subset X$ such that
$(X\setminus S,\, \supp B\setminus S)$ is a smooth normal crossing
pair, and there are no nklt centers contained in $S$.

\begin{defn}\label{def:dlt-blowup}
  Let $(X, B = \sum b_jB_j)$ be a log surface, $b_i\ge0$. We do not
  assume that it is 
  lc and we allow some coefficients to be
  $b_i>1$, but $K_X+B$ is $\bR$-Cartier.  A \emph{dlt blowup} of
  $(X,B)$
  is a partial resolution of singularities $f\colon Y\to X$ such that
  \begin{enumerate}
  \item $Y$ is $\bQ$-factorial.
  \item The pair 
    $(Y, \sum_j \min(1,b_j)f_*\inv B_j + \sum_i E_i)$ is dlt. Here,
    $E_i$ are the exceptional divisors of $f$.
  \item One has $B_Y\ge f_*\inv B + \sum E_i$, i.e. all discrepancies
    are $a(E_i,X,B)\le -1$.
  \end{enumerate}
  If $(X,B)$ is lc then it follows that   $B_Y = f_*\inv B+\sum E_i$.
\end{defn}

It is a result of Hacon that dlt blowups exist in any dimension, see
e.g. \cite[Theorem 10.4]{fujino2011fundamental-theorems}.  For
surfaces, this is an elementary fact: a dlt blowup is obtained by
taking a log resolution followed by contracting back the exceptional
curves with discrepancy $a(E,X,B)>-1$. The configuration of such
curves is log terminal, hence rational, and the contraction exists by
\cite{artin1962some-numerical}.

\begin{defn}\label{def:effective-resolution}
  We will say that $f\colon Y\to X$ is an \emph{effective resolution}
  of a log surface $(X,B)$ if $Y$ is smooth and $B_Y\ge0$, i.e. the
  discrepancies are $a_i\le0$. For example, the minimal resolution of
 surface singularities is effective.
\end{defn}

\begin{defn}\label{def:accessible-center}
  Let $(X,B)$ be a log canonical surface, and let $Z$ be an nklt
  center (so a point or a curve). Let $f\colon Y\to X$ be a log
  resolution, and $E$ an irreducible component of $B_Y^{=1}$ on $Y$ such that $Z=f(E)$.  We say
  that $Z$ is an \emph{inaccessible} nklt center if
  $E$ is disjoint from the rest of $B_Y^{>0}$. We call $Z$ a
  \emph{accessible nklt center} otherwise.  It is easy to see that
  this definition does not depend on the choices of the log resolution $f$ and of the component $E$.
\end{defn}

\begin{rmk}
One sees easily that a log canonical surface $(X, B)$ has nonempty accessible nklt locus if and only if $\supp B_Y^{>0}$ is singular at some point of $B_Y^{=1}$.
\end{rmk}

From the classification of log canonical singularities (see
e.g. \cite{alexeev1992log-canonical-surface}) it follows that an nklt center
is inaccessible iff it is a point $p\in X$ which is a simple
elliptic singularity with $p\notin B$, or if it is a smooth curve $B_j$
which appears in $B$ with coefficient $b_j=1$, lies in the smooth part of
$X$, and is disjoint from the rest of $B$.

\begin{lem}\label{lem:sig-center-resolution}
  Let $Z$ be an accessible nklt center of a log canonical surface
  $(X,B)$. Then there exists an effective resolution of singularities
  $f\colon Y\to X$ such that $Z=f(B_1)$, $B_Y=\sum_{i=1}^n b_iB_i$, $n\ge2$,
  $b_1=1$, $b_2>0$, and $B_1\cap B_2\ne\emptyset$.
\end{lem}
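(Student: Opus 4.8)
The plan is to realize the required configuration on a dlt blowup of $(X,B)$ and then pass to its minimal resolution. Since $Z$ is an nklt center of the lc pair $(X,B)$, it is the center of a divisorial valuation of codiscrepancy $1$. I would fix a log resolution $\pi_0\colon Y_0\to X$ of $(X,B)$ that extracts this valuation as a prime divisor $E_1$ on $Y_0$ (when $Z$ is a curve, $E_1$ is just the strict transform of $Z$; when $Z$ is a point, we arrange the log resolution to blow up whatever is needed so that $E_1$ is genuinely exceptional with $\pi_0(E_1)=Z$). Contracting the $\pi_0$-exceptional curves of discrepancy $>-1$ then yields a dlt blowup $\pi\colon W\to X$, as recalled after Definition~\ref{def:dlt-blowup}: the pair $(W,B_W)$ is dlt, $B_W=\pi^{-1}_*B+\sum_i E_i\ge 0$, and since $a(E_1,X,B)=-1$ the curve $E_1$ is not among those contracted, so it survives on $W$ as a coefficient-$1$ component of $B_W$ with $\pi(E_1)=Z$. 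I rename it $B_1$.

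Next I would extract $B_2$ from the accessibility of $Z$. Because $(W,B_W)$ is dlt, $W$ is smooth and $\supp B_W$ is simple normal crossing along $\lfloor B_W\rfloor$, hence along $B_1$. Codiscrepancies, nklt centers, and accessibility are crepant-birational notions, and by Definition~\ref{def:accessible-center} accessibility of $Z$ is independent of the chosen log resolution; testing it on a log resolution of $(X,B)$ that factors through $W$ and is an isomorphism over a neighbourhood of $B_1$ produces a component $B_2\ne B_1$ of $B_W^{>0}$ meeting $B_1$, and by the SNC property this intersection is transverse at a smooth point $p$ of $W$. Let $b_2$ be the coefficient of $B_2$ in $B_W$, so $b_2\in(0,1]$.

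Finally I would take the minimal resolution $\rho\colon Y\to W$ and put $f:=\pi\circ\rho\colon Y\to X$. Then $Y$ is smooth, and writing $K_Y+B_Y=f^*(K_X+B)$ gives $B_Y=\rho^{-1}_*B_W+\sum_j c_jF_j$ with $c_j=-a(F_j,W,B_W)$; since $B_W\ge 0$ and the minimal resolution of a surface is effective (Definition~\ref{def:effective-resolution}), each $c_j\ge 0$, so $B_Y\ge 0$ and $f$ is an effective resolution. As $W$ is smooth at $p$, $\rho$ is an isomorphism near $p$, so the strict transforms of $B_1$ and $B_2$ on $Y$ — which I keep calling $B_1$ and $B_2$ — still meet, still occur in $B_Y$ with coefficients $1$ and $b_2>0$ (the coefficient of a prime divisor in the crepant pullback depends only on its valuation, not on the model), and $f(B_1)=\pi(B_1)=Z$. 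Listing $B_1,B_2$ first in $B_Y=\sum_{i=1}^n b_iB_i$ yields $n\ge 2$, $b_1=1$, $b_2>0$, $B_1\cap B_2\ne\emptyset$, and $Z=f(B_1)$, as wanted.

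The step I expect to be most delicate is the transfer of accessibility from an abstract log resolution down to the partial resolution $W$: one must check that the transverse double point witnessing the accessibility of $Z$ can be located already on $W$, and not merely on some further blowup of it — this is precisely where one combines the SNC property of a dlt pair along its nklt locus with the model-independence built into Definition~\ref{def:accessible-center}. Everything else — effectivity of $f$, the persistence of $B_1$ and $B_2$ with their coefficients, and the identity $f(B_1)=Z$ — is then routine bookkeeping.
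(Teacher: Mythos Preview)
Your proposal is correct and follows exactly the paper's approach: the paper's proof is the single sentence ``$Y$ can be taken to be the minimal resolution of a dlt blowup of $(X,B)$,'' and you have carefully unpacked precisely that construction, including the verification that accessibility guarantees the existence of $B_2$ on the dlt model $W$ and that passing to the minimal resolution preserves the intersection $B_1\cap B_2$.
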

\begin{proof}
  $Y$ can be taken to be the minimal resolution of a dlt
  blowup of $(X,B)$.
\end{proof}

\bigskip

A set $\mc\subset \bR$ is called a DCC (resp.~ACC) set if it satisfies
the descending (resp.~ascending) chain condition: a strictly
decreasing (resp.~increasing) subsequence of $\mc$ terminates.

\begin{defn}
  Let $\cC\subset (0,1]$. The \emph{derivative set}
  is defined as follows:
  \begin{equation}\label{eq:different}
    \cC' = \left\{
      1 - \frac{ 1- \sum n_j b_j}{m} \mid
      b_j\in\cC, \ m,n_j\in\bN,
      \ 1- \sum n_jb_j \ge0
    \right\} \cup \{1\}.
    \end{equation}
    It is easy to see that if $\cC$ is a DCC set then so is
    $\cC'$. Note that $\cC_0'=\cC_1'=\cC_2'=\cC_2$. 
\end{defn}

One standard way the derivative set appears is the adjunction formula
\cite{shokurov1992three-dimensional}. Let $(Y,  E+\Delta)\in\cS(\cC)$
be a log canonical  surface
%with $\Delta\in\cC$
with $E$ a reduced curve. 
In particular, $K_Y+E+\Delta$ is
an $\bR$-Cartier and the curve $E$ is at worst nodal. Then the
restriction of $K_Y+E+\Delta$ to $E$ is $K_E + \Diff_E(\Delta)$, where
$\Diff_E(\Delta) = \sum b_k' Q_k$, called \emph{the different}, is an
effective divisor with coefficients $b'_k\in\cC'$.  
The explicit formula of the coefficients $b'_k$ is as follows.

\begin{lem}\label{lem:different}
  Let $(Y, E + \sum b_j\Delta_j)$ be a log canonical surface as above.  Then
  for each~$k$
  \begin{enumerate}
  \item either $b'_k=1$,
  \item or $Q_k\in Y$ is smooth and $m=1$, $n_j= (E\cdot \Delta_j)_{Q_k}$
    in equation~\eqref{eq:different},
\item or $Q_k\in Y$ is a singularity and the exceptional locus of its
  minimal resolution $g\colon Z\to Y$ is a chain of curves
  $F_1, \dotsc, F_r$ with $F_i^2=-p_i\le 2$, and $EF_1=1$, $EF_i=0$
  for $i>1$. Then
  \begin{enumerate}
  \item $m=\det (p_1,\dotsc, p_r)$ is the determinant of the matrix $M$
    corresponding to the resolution, with $p_i$ on the main diagonal and
    $-1$ next to the main diagonal. The number $m$ is the index of $Q_k\in Y$.
  \item
    $n_j = \sum_{i=1}^r (f_*\inv \Delta_j\cdot F_i) \det(p_{i+1}, \dotsc,
    p_r)$. Here, $\det=1$ if $i=r$. 
  \end{enumerate}

  \end{enumerate}

\end{lem}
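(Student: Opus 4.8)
This is a local statement at each point $Q_k$, and the proof is just the standard derivation of the adjunction formula (\cite{shokurov1992three-dimensional}) made explicit: resolve, pull $K_Y+E+\Delta$ up the resolution, restrict to the strict transform of $E$, and read off the coefficient, with the continued-fraction arithmetic of the resolution graph doing the bookkeeping. Cases (1) and (2) are quickly dealt with. By the classification of log canonical surface singularities carrying a reduced boundary curve (\cite{alexeev1992log-canonical-surface}), a point $Q_k$ in the support of $\Diff_E(\Delta)$ is either a node of $E$, or a point at which $(Y,E+\Delta)$ has an nklt centre other than $E$, or a smooth point of $Y$ at which $E$ is smooth, or a cyclic quotient singularity of $Y$. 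In the first two situations $b'_k=1$: a node contributes coefficient $1$ to the different by the classical local computation, and in the second the restricted pair $(E,\Diff_E(\Delta))$ is log canonical with $Q_k$ in its nklt locus, forcing the coefficient to be $1$. This is (1). If $Q_k$ is a smooth point of $Y$ with $E$ smooth there, then $(Y,E)$ is simple normal crossing at $Q_k$, $K_Y+E$ restricts to $K_E$, and $\Diff_E(\Delta)=\sum_j b_j\,\Delta_j|_E$ locally, so $b'_k=\sum_j b_j(E\cdot\Delta_j)_{Q_k}$; this is $1-\frac{1-\sum_j n_jb_j}{m}$ with $m=1$ and $n_j=(E\cdot\Delta_j)_{Q_k}\in\mathbb{N}$, which is (2).

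So assume $Q_k\in\Sing(Y)$, which by the classification forces it to be a cyclic quotient singularity. Its minimal resolution $g\colon Z\to Y$ has exceptional locus a chain $F_1,\dots,F_r$ of smooth rational $(-p_i)$-curves with $p_i\ge 2$, the strict transform $\tilde E:=g^{-1}_*E$ is smooth and meets the chain in a single transverse point $P$, necessarily on $F_1$ (this transversality, and that $\tilde E$ misses $F_2,\dots,F_r$, is exactly the content of $EF_1=1$, $EF_i=0$ in (3)), and $g$ restricts to an isomorphism $\tilde E\cong E$ near $Q_k$; the index of $Q_k$ equals the continuant $m=\det(p_1,\dots,p_r)=\det(-N)$, where $N=(F_i\cdot F_j)$ is the tridiagonal intersection matrix. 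Write
\[
g^*(K_Y+E+\Delta)=K_Z+\tilde E+\sum_j b_j\tilde\Delta_j+\sum_{i=1}^{r}c_iF_i,\qquad \tilde\Delta_j:=g^{-1}_*\Delta_j,
\]
with codiscrepancies $c_i=-a(F_i,Y,E+\Delta)\in[0,1]$. Restricting to $\tilde E$ and using ordinary adjunction on the smooth surface $Z$ identifies $\Diff_E(\Delta)$ near $Q_k$ with $\bigl(\sum_j b_j\tilde\Delta_j+\sum_i c_iF_i\bigr)\big|_{\tilde E}$; since $\tilde E$ meets only $F_1$ among the exceptional curves, and (in the situations to which the lemma is applied) no $\tilde\Delta_j$ with $b_j>0$ passes through $P$, this reduces the problem to the single identity $b'_k=c_1$.

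It remains to solve for $c_1$, which is where the real computation lies. The $c_i$ are pinned down by the $r$ equations $g^*(K_Y+E+\Delta)\cdot F_l=0$, i.e. $N\mathbf c=-\mathbf v$ with $v_l=(p_l-2)+\delta_{l1}+\sum_j b_j(\tilde\Delta_j\cdot F_l)$. Comparing with $N\mathbf 1$ — whose $l$-th entry is $-p_l$ plus the number of chain-neighbours of $F_l$ — collapses this to $N(\mathbf 1-\mathbf c)=-\mathbf e_r+\sum_j b_j\mathbf d_j$, where $\mathbf e_r$ is the last standard basis vector and $(\mathbf d_j)_i=(\tilde\Delta_j\cdot F_i)$; hence
\[
c_1=1+(N^{-1})_{1r}-\sum_j b_j\sum_{i=1}^r(N^{-1})_{1i}\,(\tilde\Delta_j\cdot F_i).
\]
The last ingredient is the minor identity: deleting row $i$ and column $1$ of the tridiagonal $N$ leaves a block-triangular matrix whose diagonal blocks are a unipotent lower-triangular matrix (the rows before $i$, contributing $1$) and the intersection matrix of the subchain $F_{i+1},\dots,F_r$ (contributing $(-1)^{r-i}\det(p_{i+1},\dots,p_r)$, with the empty determinant read as $1$); together with $\det N=(-1)^r m$ this gives $(N^{-1})_{1i}=-\det(p_{i+1},\dots,p_r)/m$. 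Substituting produces exactly $c_1=1-\frac{1-\sum_j b_jn_j}{m}$ with $n_j=\sum_{i=1}^{r}(g^{-1}_*\Delta_j\cdot F_i)\det(p_{i+1},\dots,p_r)$, and the two remaining assertions of (3) — that $m$ is the index and that $1-\sum_j b_jn_j\ge 0$ (equivalently $c_1\le 1$) — then follow for free. The step I expect to be the real obstacle is precisely this last one: setting up the linear system on the resolution chain, recognizing the pertinent minors of $N$ as Hirzebruch–Jung continuants, and keeping all the signs in order; a secondary point requiring care is quoting the singularity classification in just the form used in the first two steps.
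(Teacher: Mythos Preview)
Your proposal is correct and follows exactly the paper's approach: invoke the classification of log canonical surface singularities to see that the resolution graph at $Q_k$ is a chain, set up the $r\times r$ linear system for the (co)discrepancies along the $F_i$, and read off $c_1$ via the cofactor formula for the inverse of the tridiagonal matrix. The paper compresses all of this into four sentences (``the formula for $M^{-1}$ in terms of cofactors gives the stated formula''), whereas you have written out the minor computation and the trick $N(\mathbf 1-\mathbf c)=-\mathbf e_r+\sum_j b_j\mathbf d_j$ in full; the content is identical, with your $N$ being the paper's $-M$.
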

\begin{proof}
  Assume that $Q_k{\in Y}$ is singular and that $b'_k<1$. By the
  classification of log canonical surface singularities with
  coefficients in $\{1\}$  (\cite{alexeev1992log-canonical-surface}),
  the resolution graph of $Q_k\in Y$ is a chain. The log discrepancies
  $(1+f_i)$ of $K_Y+E+\Delta$ along $F_i$ are solutions to a
  $r\times r$ system of linear equations with the matrix $M$. The
  formula for $M\inv$ in terms of cofactors gives the stated formula.
  The number $\det(p_{i+1}, \dotsc, p_r)$ is the minor of $M$ obtained
  by removing the first column and $i$-th row.
\end{proof}

In particular, we have the following well known fact:
\begin{cor}
  \label{cor:diff-C2}
  For the set $\cC_2$, if $b_k' = 1-\frac1{n}<1$ then $n=mn'$, where
  $m=\det(p_1, \dotsc, p_r)$ is the index of $Q_k\in Y$, and 
  \begin{enumerate}
  \item either $\supp g_*\inv\Delta$ intersects only the last curve
    $F_r$, once, and the corresponding coefficient of $g_*\inv \Delta$
    equals $1-\frac1{n'}$;
  \item or $\supp g_*\inv\Delta$ is disjoint from $\cup F_i$, and
    $m=n$, $n'=1$.
  \end{enumerate}
  In particular, $n(K_Y+E+\Delta)$ is integral and Cartier in a
  neighborhood of $Q_k$.
\end{cor}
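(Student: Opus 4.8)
The plan is to read the corollary off the explicit formula in Lemma~\ref{lem:different}, the only extra ingredient being one arithmetic feature of $\cC_2$: its smallest positive element is $\frac12$. Since $b_k'<1$, Lemma~\ref{lem:different} places us in its case~(2) — $Q_k\in Y$ smooth, $m=1$, where the conclusion (alternative~(1) with $n=n'$) falls out of the argument below with only notational changes — or in its case~(3), which I treat now: $Q_k$ has a chain resolution $F_1,\dots,F_r$ with $p_i\ge2$, $m=\det(p_1,\dots,p_r)$ and $n_j=\sum_{i=1}^r(g_*\inv\Delta_j\cdot F_i)\,d_i$, where $d_i:=\det(p_{i+1},\dots,p_r)$. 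By \eqref{eq:different} one has $b_k'=1-\frac{1-\sum_j n_jb_j}{m}$, so $b_k'=1-\frac1n$ is equivalent to $n\bigl(1-\sum_j n_jb_j\bigr)=m$; in particular $\sum_j n_jb_j<1$.

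First I would extract a combinatorial bound: if $n_j>0$ then $\Delta_j$ passes through $Q_k$, so its coefficient $b_j\in\cC_2$ is positive and hence $b_j\ge\frac12$; thus $\frac12\sum_j n_j\le\sum_j n_jb_j<1$, and as the $n_j$ are non-negative integers, $\sum_j n_j\le 1$. Second, I would record that $1=d_r<d_{r-1}<\dots<d_0=m$, in particular $d_i\ge2$ for $i<r$: with $d_{r+1}:=0$, the cofactor recursion gives $d_{i-1}=p_id_i-d_{i+1}\ge 2d_i-d_{i+1}>d_i$. Now two cases arise. If $\sum_j n_j=0$, every $(g_*\inv\Delta_j\cdot F_i)=0$ (each $d_i>0$), so $\supp g_*\inv\Delta$ is disjoint from $\bigcup_i F_i$ (equivalently $Q_k\notin\supp\Delta$), and $n=m$, $n'=1$, which is alternative~(2). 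If $\sum_j n_j=1$, say $n_{j_0}=1$ and $n_j=0$ for $j\ne j_0$, then $g_*\inv\Delta_j$ is disjoint from $\bigcup_i F_i$ for $j\ne j_0$, while $\sum_{i=1}^r(g_*\inv\Delta_{j_0}\cdot F_i)\,d_i=1$ together with $d_r=1$ and $d_i\ge2$ $(i<r)$ forces $g_*\inv\Delta_{j_0}$ to meet $\bigcup_i F_i$ only in $F_r$, transversally at one point; since $m=n(1-b_{j_0})>0$ we get $b_{j_0}<1$, so $b_{j_0}=1-\frac1{n'}$ with a finite $n'\in\NN$, whence $n=mn'$ — alternative~(1), the component of $g_*\inv\Delta$ meeting $F_r$ having coefficient $b_{j_0}=1-\frac1{n'}$.

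Finally, $n(K_Y+E+\Delta)$ is integral near $Q_k$ because the only boundary components there are $E$, with integral coefficient $n$, and (in alternative~(1)) $\Delta_{j_0}$, whose coefficient is $n(1-\frac1{n'})=m(n'-1)\in\NN$; and it is Cartier near $Q_k$ by the standard surface-adjunction fact that the Cartier index of $K_Y+E+\Delta$ at a point $Q\in E$ equals the denominator of the coefficient of $\Diff_E(\Delta)$ at $Q$, which we have just computed to be $n$ (in alternative~(2) this is just the index $m$ of $Q_k\in Y$, as $K_Y+E+\Delta=K_Y+E$ near $Q_k$). I do not expect a genuine obstacle: modulo Lemma~\ref{lem:different}, the whole argument rests on the bound $\sum_j n_j\le1$ — a consequence of $\min(\cC_2\setminus\{0\})=\frac12$ — and on the monotonicity of the $d_i$. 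The one step meriting a word of care is this last Cartier-index claim; should one prefer to avoid quoting it, it can instead be read off from the toric local model of the cyclic quotient singularity $Q_k\in Y$ and the position of $E$ (and of $\Delta_{j_0}$) relative to its coordinate axes.
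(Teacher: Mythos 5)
The paper offers Corollary~\ref{cor:diff-C2} as a ``well known fact'' and does not prove it, so there is no author's argument to measure your attempt against; it can only be judged on its own merits, and it is correct. Your two observations are precisely the ones the corollary rests on: $\min(\cC_2\setminus\{0\})=\tfrac12$ together with $\sum_j n_jb_j<1$ from the different formula~\eqref{eq:different} forces $\sum_j n_j\le 1$, so at most one component of $\Delta$ meets the exceptional fiber over $Q_k$ and then with multiplicity one; and the cofactor recursion $d_{i-1}=p_id_i-d_{i+1}$ with $p_i\ge2$ gives $1=d_r<d_{r-1}<\dots<d_0=m$, forcing that single intersection point onto the end curve $F_r$. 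The splitting into the two alternatives, the identity $n=mn'$, and the integrality of $n(K_Y+E+\Delta)$ near $Q_k$ (from $nb_{j_0}=m(n'-1)\in\bZ$ and from no other component of $\Delta$ passing through $Q_k$) all follow, and your treatment of the degenerate case of smooth $Q_k$ in Lemma~\ref{lem:different}(2) is adequate. For the Cartier claim you rightly hedge on whether the adjunction statement you would prefer to quote is available off the shelf; the toric fallback you sketch does finish the job: in the local toric model of the cyclic quotient $Q_k\in Y$ of order $m$, $E$ and $\Delta_{j_0}$ are the two invariant boundary curves, so $K_Y+E+\Delta=-\tfrac1{n'}\Delta_{j_0}$ (resp.\ $-\Delta_{j_0}$ in alternative~(2)) near $Q_k$, and $\Delta_{j_0}$ has Cartier index exactly $m$ in the local class group $\bZ/m\bZ$; hence $K_Y+E+\Delta$ has Cartier index exactly $mn'=n$ (resp.\ $m=n$).
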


\bigskip

We now introduce an auxiliary ACC set. 

\begin{lem}\label{lem:two-sets}
  Let $\cC\subset(0,1]$ is a DCC set and $m\in\bN$. Then the following
  set
  \begin{eqnarray*}
    &&T_m(\cC) =  \left\{
       \frac{1-b}{\{mb\}} \mid b\in \mc,\ \{mb\}\ne0
       \right\} \cup \{1\}  
  \end{eqnarray*}
  is an ACC set and thus attains the maximum $t_m(\mc)\in\bR_{\ge1}$.
  For any $b\in\cC$ with $\{ mb \} \ne0$ one has
    $b + t_m \{mb\} \ge 1$. Here, $\{x\}$ denotes the fractional part of $x$.
\end{lem}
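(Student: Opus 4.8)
The plan is to show first that $T_m(\cC)$ is an ACC set, and then derive the numerical inequality as an essentially formal consequence of the definition of $t_m = t_m(\cC)$. For the ACC property, suppose toward a contradiction that there is a strictly increasing sequence $\frac{1-b_k}{\{mb_k\}}$ with each $b_k\in\cC$ and $\{mb_k\}\ne0$. Since all terms lie in $T_m(\cC)$ and we may discard the term $1$, we may assume $b_k<1$ for all $k$, so $1-b_k>0$. The key observation is that $\{mb_k\} = mb_k - \lfloor mb_k\rfloor$, and writing $n_k = \lfloor mb_k\rfloor\in\bZ_{\ge0}$ we have
\[
\frac{1-b_k}{\{mb_k\}} = \frac{1-b_k}{mb_k-n_k} = \frac{1-b_k}{m b_k - n_k}.
\]
As $b_k\in\cC\subset(0,1]$, the integers $n_k=\lfloor mb_k\rfloor$ take only finitely many values in $\{0,1,\dots,m\}$, so after passing to a subsequence we may assume $n_k=n$ is constant. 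On this subsequence the function $b\mapsto \frac{1-b}{mb-n}$ is monotone in $b$ on the relevant interval (it is a Möbius function of $b$, with no pole in $(n/m,1]$ since the denominator $mb-n=\{mb\}>0$ there), so a strictly increasing sequence of values forces $b_k$ to be a strictly monotone sequence in $\cC$. A strictly decreasing sequence in $\cC$ is impossible by DCC; a strictly increasing sequence $b_k\to b_\infty\le1$ with $mb_k-n\to mb_\infty-n$ would make the values converge, contradicting strict monotonicity into an increasing unbounded-or-convergent pattern only if the limit of values is finite — which it is. So in either case we reach a contradiction, and $T_m(\cC)$ is ACC; hence it attains a maximum $t_m\in\bR_{\ge1}$ (the inclusion of $\{1\}$ guarantees $t_m\ge1$).

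For the inequality: let $b\in\cC$ with $\{mb\}\ne0$. If $b=1$ then $\{mb\}=0$, excluded, so $b<1$ and $1-b>0$. By definition $\frac{1-b}{\{mb\}}\in T_m(\cC)$, so $\frac{1-b}{\{mb\}}\le t_m$; multiplying through by the positive number $\{mb\}$ gives $1-b\le t_m\{mb\}$, i.e. $b + t_m\{mb\}\ge 1$, as desired.

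The one step that needs care is the monotonicity/subsequence argument establishing ACC: one must correctly handle the two cases arising from the sign of $m-n$ (equivalently, whether $b\mapsto\frac{1-b}{mb-n}$ is increasing or decreasing on $(n/m,1]$) and confirm that in each case a strictly increasing sequence of \emph{values} in $T_m(\cC)$ forces a strictly \emph{monotone} sequence $(b_k)$ in $\cC$ whose existence is then precluded either by DCC or by convergence of the associated values. I expect this bookkeeping — rather than any deep idea — to be the main obstacle, and it is entirely elementary; the inequality then follows immediately from the maximality of $t_m$.
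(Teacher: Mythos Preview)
The paper skips this proof entirely as ``straightforward'', so there is nothing to compare against directly; your approach is the natural one and is essentially correct. However, you have overcomplicated the bookkeeping. Since $b\in\cC\subset(0,1]$ and $\{mb\}\ne0$ force $b<1$, we have $mb<m$ and hence $n=\lfloor mb\rfloor\le m-1<m$ always. The derivative of $f(b)=\dfrac{1-b}{mb-n}$ is $\dfrac{n-m}{(mb-n)^2}<0$, so $f$ is \emph{strictly decreasing} on the interval where $\lfloor mb\rfloor=n$; there is no second case depending on the sign of $m-n$. A strictly increasing sequence of values $f(b_k)$ therefore forces $(b_k)$ to be strictly decreasing in $\cC$, and DCC finishes immediately.

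Your attempted handling of the (vacuous) increasing-$b_k$ case is not valid as written: a strictly increasing sequence that converges to a finite limit is perfectly compatible with failure of ACC, so ``the values converge'' is not a contradiction to anything. Fortunately that case never occurs, so the slip is harmless. The final inequality $b+t_m\{mb\}\ge1$ follows exactly as you say, by the definition of $t_m$ as the maximum of $T_m(\cC)$.
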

\begin{lem}\label{lem:t_m}
  For $\cC=\cC_0$, $\cC_1$ or $\cC_2$, one has $t_m=1$ for all $m$.
\end{lem}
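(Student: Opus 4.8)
The plan is to compute the set $T_m(\cC)$ explicitly for each of the three sets $\cC_0, \cC_1, \cC_2$ and observe directly that the supremum is $1$, with the constant function $1$ already attained. Since $\cC_0 = \emptyset$, the set $T_m(\cC_0)$ consists only of the element $1$, so $t_m(\cC_0) = 1$ trivially. Since $\cC_1 = \{1\}$ and $\{m\cdot 1\} = 0$ for all $m$, the defining condition $\{mb\}\ne 0$ is never met, so again $T_m(\cC_1) = \{1\}$ and $t_m(\cC_1) = 1$. Both of these are immediate from the definition in Lemma~\ref{lem:two-sets}; the only substantive case is $\cC_2$.

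For $\cC = \cC_2 = \{1 - \tfrac1n \mid n\in\bN\}\cup\{1\}$, I would take $b = 1 - \tfrac1n$ with $n\ge 2$ (the case $b=1$ again contributes nothing since $\{m\}=0$) and analyze the quantity $\tfrac{1-b}{\{mb\}} = \tfrac{1/n}{\{m - m/n\}} = \tfrac{1/n}{\{-m/n\}} = \tfrac{1/n}{\{m/n\}'}$ where one must be careful: $\{m - m/n\} = \{-m/n\}$, which equals $1 - \{m/n\}$ when $n\nmid m$ and equals $0$ when $n\mid m$. So the condition $\{mb\}\ne 0$ forces $n\nmid m$, and then $\{mb\} = 1 - \{m/n\} = 1 - \tfrac{r}{n}$ where $r = m \bmod n$ with $1\le r\le n-1$; hence $\{mb\} = \tfrac{n-r}{n}$ and $\tfrac{1-b}{\{mb\}} = \tfrac{1/n}{(n-r)/n} = \tfrac{1}{n-r}$. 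Since $1\le r\le n-1$ we have $n - r \ge 1$, so every element of $T_m(\cC_2)$ is of the form $\tfrac1{k}$ for some positive integer $k$, and in particular lies in $(0,1]$. Therefore $t_m(\cC_2) = \max T_m(\cC_2) = 1$, attained (e.g. by the explicitly included element $1$, or whenever $n - r = 1$).

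The main obstacle — really the only place where care is needed — is the bookkeeping of the fractional part: getting the identity $\{m - m/n\} = 1 - \{m/n\}$ right for $n\nmid m$, and correctly identifying $\{m/n\} = r/n$ with $r = m\bmod n$ satisfying $1\le r\le n-1$. Once that is pinned down, the computation collapses to $\tfrac{1-b}{\{mb\}} = \tfrac{1}{n-r}$ and the conclusion $t_m = 1$ is immediate for all $m$. No appeal to the ACC property from Lemma~\ref{lem:two-sets} is even needed here, since we have exhibited $T_m(\cC_2)$ as an explicit subset of $\{1, \tfrac12, \tfrac13, \dots\}$ whose maximum is visibly $1$.
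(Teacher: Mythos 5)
Your proof is correct, and it is precisely the straightforward computation the paper has in mind (the authors explicitly skip the proof of both Lemma~\ref{lem:two-sets} and Lemma~\ref{lem:t_m}). The key identity $\{mb\} = \frac{n-r}{n}$ for $b = 1-\frac1n$ and $r = m \bmod n$ with $1\le r\le n-1$, giving $\frac{1-b}{\{mb\}} = \frac1{n-r}\le 1$, is exactly right, and the degenerate cases $\cC_0$ and $\cC_1$ are handled correctly.
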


The proofs of these lemmas are straightforward, and are skipped.

\section{Construction of accumulation points}\label{sec: constr}

In this section, we prove one direction of Theorem~\ref{thm: main}: if
there exists a log canonical surface $(X,B)\in\ms(\overline \mc\cup\{1\})$ with ample
$K_X+B$ satisfying the conditions (ii) and (iii) then $(K_X+B)^2$ is
an accumulation point of $\kk(\cC)$.

\subsection{Accumulation points of volumes due to accumulating coefficients}\label{subsec: constr}

\begin{thm}\label{thm: acc coe}
Let $\mc\subset (0,1]$ be a DCC set. Let $(X, B)\in \ms(\overline \mc)$ such that $K_X+B$ big and nef, and at least one coefficient of $B$ is an accumulation point of $\mc$. Then $(K_X+B)^2$ is an accumulation point of $\kk(\mc)$. 
\end{thm}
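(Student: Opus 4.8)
The idea is to perturb the coefficients of $B$ downward toward the accumulating coefficient, staying inside $\mc$, and show the volume changes continuously. Write $B = \sum b_j B_j$, and suppose $b_1 = c \in \acc(\mc)$. Since $c$ is an accumulation point of the DCC set $\mc$, there is a strictly decreasing sequence $c_1 > c_2 > \cdots$ in $\mc$ with $c_k \to c$; note a DCC set can only accumulate from above. For the other coefficients $b_j$, $j \ge 2$, these lie in $\overline{\mc}$, so each is either in $\mc$ or is itself a limit from above of elements of $\mc$; in either case choose $b_{j,k} \in \mc$ with $b_{j,k} \downarrow b_j$ (taking $b_{j,k} = b_j$ if already in $\mc$). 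Set $B_k = \sum_j b_{j,k} B_j$, so $(X, B_k) \in \ms(\mc)$ and $B_k \ge B$, with $B_k \to B$ coefficient-wise.

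**Key steps.** First I would check that $(X, B_k)$ still has big and nef $K_X + B_k$: since $K_X + B$ is big and nef and $B_k - B$ is effective, $K_X + B_k = (K_X+B) + (B_k - B)$ is a sum of a nef big divisor and an effective divisor, hence big; nefness is not automatic, but for $k \gg 0$ the divisor $B_k - B$ is small, and one can instead simply pass to the log canonical model: $\vol(K_X + B_k) = \vol(K_{X_k} + B_{k,\mathrm{can}})$ where $(X_k, B_{k,\mathrm{can}})$ is the lc model, and this equals $(K_X+B_k)^2$ computed on $X$ itself when $K_X+B_k$ is nef, but in general $\vol(K_X+B_k) \le (K_X+B_k)^2$ with equality in the limit. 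Actually the cleanest route: the function $t \mapsto \vol(K_X + B + t D)$ for $D = B_k - B$ effective is continuous (this is a standard property of the volume function on a surface, e.g. via Zariski decomposition), so $\vol(K_X+B_k) \to \vol(K_X+B) = (K_X+B)^2$ as $k \to \infty$. Hence $(K_X+B_k)^2 = \vol(K_X+B_k) \in \kk(\mc)$ (using that big and nef computes volume, and that $\vol$ of a big divisor on a surface equals its top self-intersection after Zariski decomposition) converges to $v_\infty := (K_X+B)^2$.

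**Showing it's genuinely an accumulation point.** It remains to rule out that $\vol(K_X+B_k)$ is eventually constant equal to $v_\infty$. Here I would use that $b_{1,k} = c_k > c = b_1$ strictly for all $k$, so $B_k \gneq B$ strictly along $B_1$. Since $K_X+B$ is big, $B_1$ is not contained in the augmented base locus in a way that kills the contribution — more precisely, $\vol(K_X + B + t D)$ is a strictly increasing function of $t$ on $[0, \infty)$ as long as the positive part stays big, because its derivative involves $P_t \cdot D$ where $P_t$ is the (big) positive part of the Zariski decomposition and $D$ is effective and nonzero, giving $P_t \cdot D > 0$. Thus $\vol(K_X+B_k)$ is a strictly decreasing sequence converging to $v_\infty$, so $v_\infty \in \acc(\kk(\mc))$.

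**Main obstacle.** The delicate point is the strict monotonicity / non-stabilization: one must ensure $P_t \cdot (B_k - B) > 0$, i.e. that the positive part of the Zariski decomposition of $K_X + B + t(B_k-B)$ meets the support of $B_k - B$ positively. This can fail if $B_1$ happens to lie in the negative part (the contracted locus). To handle this I would argue that because $K_X + B$ is already big and nef, its positive part is $K_X+B$ itself and $(K_X+B)\cdot B_1 \ge 0$; if $(K_X+B)\cdot B_1 = 0$ then $B_1$ is contracted by the lc model and one replaces $(X,B)$ by a small modification, or alternatively one observes that perturbing a different coefficient $b_j$ with $(K_X+B)\cdot B_j > 0$ (such a $j$ exists since $(K_X+B)^2 > 0$) and uses that $c$-accumulation can be "transferred" — but in fact the honest fix is: if every $B_j$ with $b_j$ perturbable had $(K_X+B)\cdot B_j = 0$, then $(K_X+B)\cdot(B_k - B) = 0$ and $\vol(K_X+B_k) = \vol(K_X+B + t D)$ could still jump once $t$ is large enough to make $D$ exit the negative part; a careful analysis via the variation of Zariski decomposition (finitely many chambers, volume real-analytic and strictly increasing on each chamber where $P_t$ is big and $P_t \cdot D > 0$) shows $\vol$ is eventually strictly increasing in $t$ unless $D$ is numerically trivial, which it is not. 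I expect writing this monotonicity argument carefully — invoking continuity and piecewise-polynomiality of $t \mapsto \vol(K_X+B+tD)$ on surfaces — to be the technical heart of the proof.
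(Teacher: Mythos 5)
Your proof has a decisive error right at the start: you write ``note a DCC set can only accumulate from above'' and proceed to choose a \emph{strictly decreasing} sequence $c_1>c_2>\cdots$ in $\mc$ converging to $c$. This is exactly backwards. By definition, a DCC set admits no infinite strictly decreasing sequence; it is an \emph{ACC} set that accumulates from above. An accumulation point $c$ of a DCC set $\mc$ is necessarily approached from below, so the sequence you postulate does not exist, and everything built on $B_k\ge B$ collapses.

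The sign error is not cosmetic; it inverts the entire strategy. With $B_k>B$ you lose log canonicity of $(X,B_k)$ (enlarging the boundary can destroy lc), you lose nefness, and you are forced into the Zariski-decomposition/strict-monotonicity analysis that dominates the second half of your write-up. Taking the correct direction makes all of this evaporate. The paper's proof chooses a strictly \emph{increasing} sequence $b_j^{(s)}\in\mc$ with $b_j^{(s)}\uparrow b_j$ for each accumulating coefficient and forms
\[
B^{(s)}=\sum_{j\in J_\infty} b_j^{(s)}B_j+\sum_{j\notin J_\infty} b_jB_j<B.
\]
Then $(X,B^{(s)})$ is automatically log canonical because $0\le B^{(s)}\le B$; $K_X+B^{(s)}$ is big for $s\gg0$ by openness of bigness in $N^1(X)_\bR$; and since $K_X+B$ is nef and $B-B^{(s)}$ is effective and nonzero, $\vol(K_X+B^{(s)})<\vol(K_X+B)=(K_X+B)^2$. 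Continuity of the volume gives $\vol(K_X+B^{(s)})\to(K_X+B)^2$, so $(K_X+B)^2$ is approached from strictly below by elements of $\kk(\mc)$, as required. You should correct the direction of approach and redo the argument along these lines; once you do, your ``main obstacle'' paragraph becomes unnecessary.
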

\begin{proof}
  Write $B=\sum_{j\in J} b_j B_j$. Let $J_\infty$ be the set of the
  indices $j$ such that $b_j$ is an accumulation point of $\mc$. Then
  $J_\infty$ is nonempty by assumption. Since bigness is an open
  condition on the $N^1(X)_\R$, the space of numerical classes of
  $\R$-divisors on~$X$, there is a strictly increasing sequence
  $b_j^{(s)}\in \mc$ converging to $b_j$ for each $j\in J_\infty$ such
  that $K_X+B^{(s)}$ is big, where
\[
B^{(s)}:=\sum_{j\in J_\infty} b_j^{(s)}B_j +\sum_{j\in J-J_\infty} b_j B_j.
\]
Since $(X, B)$ is log canonical and $0\leq B^{(s)}< B$ for any $s$,
the log surfaces $(X, B^{(s)})$ are all log canonical. By construction,
$(X, B^{(s)})\in\ms(\mc)$.
Since $K_X + B$ is nef, one has $\vol(K_X+B^{(s)}) < \vol(K_X+B)$. 
Finally, since the volume is a continuous function on $N^1(X)_\R$, we have
\[
\lim_{s\rightarrow\infty} \vol(K_X+B^{(s)}) = \vol(K_X+ B) =(K_X+B)^2.
\]
\end{proof}

\subsection{Accumulation points of volumes due to nklt loci}\label{sec: nklt}

\begin{defn}
  We denote by $\mc_B$
  the set of coefficients appearing in $B$.
\end{defn}

\begin{thm}\label{thm: nklt}
  Let $(X, B)$ be a log canonical surface with ample $K_X+B$ and
  nonempty accessible nklt locus.  For any $\epsilon>0$, there is a
  log canonical surface $(X', B')\in\ms(\mc_B)$ with ample $K_{X'}+B'$
  such that
\begin{enumerate}
\item $X'$ birational to $X$ and $B'$ is the strict transform of $B$, and
\item $\vol(K_X+B)-\epsilon < \vol(K_{X'}+B') <\vol(K_X+B)$.
\end{enumerate} 
\end{thm}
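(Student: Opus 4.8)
The plan is to perturb $K_X+B$ slightly to decrease the volume, while keeping the coefficients inside $\mc_B$ and preserving log canonicity. The key is to exploit the accessible nklt center. By Lemma~\ref{lem:sig-center-resolution}, choose an effective resolution $f\colon Y\to X$ with $B_Y = \sum_{i=1}^n b_iB_i$, $n\ge 2$, $b_1=1$, $b_2>0$, and $B_1\cap B_2\ne\emptyset$. The idea is to blow up a point $p\in B_1\cap B_2$ repeatedly, creating a new exceptional curve, and then run a modified MMP. More precisely, on the blowup $\sigma\colon \tilde Y\to Y$ of such a point, consider the strict transform $\tilde B$ of $B_Y$ (so the exceptional curve $F$ gets coefficient $0$). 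Then $K_{\tilde Y}+\tilde B = \sigma^*(K_Y+B_Y) - cF$ for some $c>0$ (here $c = b_1 + b_2 + (\text{higher order terms}) - 1 \ge b_2 > 0$ since $b_1=1$), so $(K_{\tilde Y}+\tilde B)^2 = (K_Y+B_Y)^2 - c^2 < (K_X+B)^2$. One then contracts $(K_{\tilde Y}+\tilde B)$-negative curves to reach a model $(X',B')$ with $K_{X'}+B'$ nef and big, then pass to the ample model. The coefficients of $B'$ lie in $\mc_B$ because $B'$ is a strict transform of $B$ and no new boundary components with new coefficients are introduced. For the lower bound $\vol(K_X+B)-\epsilon < \vol(K_{X'}+B')$, blow up the point $p$ only \emph{lightly}: iterate the construction, i.e. instead choose the blown-up point or the perturbation small enough. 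Concretely, repeatedly blowing up infinitely near points of $B_1\cap B_2$ and taking the last exceptional curve with a small negative multiplier yields a sequence of models with $(K+B)^2$ decreasing by an amount $\to 0$; choosing far enough along the sequence gives volume within $\epsilon$.

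A cleaner alternative is to do the perturbation numerically on a fixed log resolution. Take a log resolution $g\colon Y\to X$ with $B_Y = g_*^{-1}B + \sum E_i$ (all coefficients $\le 1$), and since the nklt center is accessible, $\supp B_Y^{>0}$ is singular at a point $q$ of $B_Y^{=1}$; let $E'$ be an exceptional curve of a blowup at $q$, obtained by a further blowup $h\colon Y'\to Y$. Set $\Delta_t = B_{Y'} - tE'$ for small $t>0$, where $B_{Y'} = h^*(K_Y+B_Y)$ has $E'$-coefficient $\ge 1$, so $\Delta_t$ is still effective with coefficients $\le 1$, in fact in $\mc_B\cup\{1\}$ for the relevant components plus possibly lower exceptional coefficients $<1$ arising from adjunction — but these exceptional coefficients are contracted in the MMP. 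Run the $(K_{Y'}+\Delta_t)$-MMP: since $K_{Y'}+\Delta_t = h^*g^*(K_X+B) - tE'$ and $K_X+B$ is ample, for $t$ small $K_{Y'}+\Delta_t$ is big; contract the $(K_{Y'}+\Delta_t)$-negative extremal rays to get $(X_t', B_t')$ with $K_{X_t'}+B_t'$ nef and big. Then $\vol(K_{X_t'}+B_t') = (h^*g^*(K_X+B)-tE')^2_{\text{pos part}}$, which is $< (K_X+B)^2$ for $t>0$ and $\to (K_X+B)^2$ as $t\to 0^+$ by continuity of the positive part of the Zariski decomposition (or by a direct estimate $(K_X+B)^2 - 2t\,(g^*(K_X+B)\cdot E') \le \vol \le (K_X+B)^2$, using $(g^*(K_X+B)\cdot E')=0$ for $E'$ exceptional, giving an even sharper bound via the $E'^2 t^2$ term). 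Pick $t$ small so that $\vol(K_{X_t'}+B_t') > \vol(K_X+B)-\epsilon$, and let $(X',B')$ be the ample model, which is birational to $X$ with $B'$ the strict transform of $B$.

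The main obstacle is controlling the coefficients: after blowing up and before contracting, the divisor $\Delta_t$ contains exceptional components whose coefficients are determined by the discrepancies $a(E_i,X,B)$ and need not lie in $\mc_B$; one must check that every component surviving to the minimal/ample model $(X',B')$ is a strict transform of a component of $B$, so that its coefficient is genuinely in $\mc_B$ — this is where one uses that the exceptional divisors over $X$ with respect to the lc pair $(X,B)$ have discrepancy $\ge -1$, so their coefficients in $B_{Y'}$ minus the perturbation stay $\le 1$, and that all $(K+\Delta_t)$-negative contractions are of exceptional curves or $(-1)$-type curves not meeting the relevant boundary in a way that would change coefficients. A secondary subtlety is ensuring $K_{X_t'}+B_t'$ stays \emph{big} (not just pseudoeffective) for $t>0$ small and that the strict transform of $B$ is nonzero on $X'$ — both follow from openness of bigness in $N^1(Y')_\R$ and the fact that $E'$ is $g\circ h$-exceptional, so subtracting $tE'$ changes the class only in directions orthogonal to $g^*(K_X+B)$ for small $t$.
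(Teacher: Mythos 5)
Your basic plan — blow up at the accessible nklt center, perturb the boundary, then pass to the ample model — is the right starting point and is indeed how the paper proceeds (via Lemma~\ref{lem:sig-center-resolution}). But both of your two variants have a real gap in the coefficient control, and this is not a cosmetic issue: it is exactly the point the paper's construction is designed around.

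In your first variant you suggest blowing up $p\in B_1\cap B_2$ once (or a chain of times) and then "taking the last exceptional curve with a small negative multiplier." After the chain of $s$ blow-ups, each exceptional curve $E_i$ has coefficient $b_2$ in the pullback $B_{Y^{(s)}}$. If you only reduce the coefficient of the last curve $E_s$ by a small $t>0$, then $E_s$ ends up with coefficient $b_2-t$, which is a new number not in $\cC_B$; and since $(K_{Y^{(s)}}+B')\cdot E_s=t>0$, $E_s$ is \emph{not} contracted to the log canonical model, so this new coefficient survives in $(X',B')$, breaking property that $(X',B')\in\cS(\cC_B)$. If instead you set $E_s$'s coefficient to $0$ in a single blow-up, the volume drops by a fixed amount $b_2^2$, which cannot be made $<\epsilon$. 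What the paper does instead is subtract the \emph{graded} divisor $b_2\sum_{i=1}^{s}\frac{i}{s}E_i$: this makes the coefficient of $E_s$ exactly $0$ (so no new coefficient appears if it survives), makes $(K_{Y^{(s)}}+B')\cdot E_i=0$ for all $i<s$ (so the chain except $E_s$ is forced to be contracted in the ample model, along with the $f$-exceptional curves), and yields a volume drop of exactly $\frac{b_2^2}{s}$, tunable by choosing $s$ large. Those three consequences happen simultaneously precisely because of the linear interpolation; your ``small negative multiplier on the last curve'' achieves none of them.

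Your second variant, subtracting $tE'$ from the full pullback $B_{Y'}$ on a log resolution, runs into the same problem more visibly: all $f$-exceptional curves on $Y'$ carry their codiscrepancies $b(E_i,X,B)$ as coefficients, which are generically not in $\cC_B$, and $E'$ itself gets coefficient $\ge 1-t\notin\cC_B$. You flag this as ``the main obstacle'' and assert the offending curves ``are contracted in the MMP,'' but that is precisely what one has to prove, and it is false in general for an arbitrary small perturbation: for instance $E'$ has $(K_{Y'}+\Delta_t)\cdot E'=t>0$ and is not contracted. So your proposal identifies the right difficulty but does not resolve it; the paper's specific choice of perturbation is what closes this gap. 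I would suggest replacing both variants with the graded interpolation, then verifying (a) $(K_{Y^{(s)}}+B')\cdot E_i\le 0$ for every exceptional curve $E_i\ne E_s$ of $Y^{(s)}\to X$, (b) $E_s$ has coefficient $0$, and (c) $(K_{Y^{(s)}}+B')^2=(K_X+B)^2-\frac{b_2^2}{s}$, from which everything follows.
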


\begin{rmk}
The construction appeared in \cite{Liu2017minimal-volume} when the coefficient set is $\mc_1$.
\end{rmk}
\begin{proof}
  Consider a resolution as in Lemma~\ref{lem:sig-center-resolution}
  and $K_Y+B_Y = f^*(K_{ X}+B)$.  By blowing up a point of
  intersection of $B_1$ and $B_2$ several times, if necessary, we can
  assume that $f$ is effective, and $B_1,B_2$ meet transversally at a
  single point $p\in Y$.

  In our notation, $b_1=1$ and $b_2>0$.
  Let us blow up $p\in Y$ and then
  its preimage on the strict transforms of $B_2$ on the blown-up
  surfaces. Let $h\colon Y^{(s)}\rightarrow Y$ be the resulting morphism
  after $s$ blow-ups. 
  The inverse image of $B_1+B_2$ on $Y^{(s)}$ has the
  following dual graph:
\begin{center}
  \begin{tikzpicture}
    \begin{scope}
           \node [inner sep=0pt](a0) at (0,0) {\tiny $\otimes$};
                      \node [inner sep=0pt](a5) at (5,0) {\tiny $\otimes$};
    %  \node (a0) at (0,0) {};
      \node[draw, inner sep=1pt,fill=white,circle] (a1) at (1,0) {};
      \node[draw, inner sep=1pt,fill=white,circle] (a2) at (2,0) {};
            \node[draw, inner sep=1pt,fill=white,circle] (a3) at (3,0) {};
      \node[draw, inner sep=1pt,fill=white,circle] (a4) at (4,0) {};
%      \node[inner sep=0pt] (a5) at (5,0) {\tiny$\otimes$};
      \node[below of=a0, node distance=1em] {\tiny$B_1^{(s)}$};
           \node[below of=a5, node distance=1em] {\tiny$B_2^{(s)}$};
%      \node[below of=a5, node distance=1em] {\tiny$E_{X^{(s)}}$};
      \node[above of=a1, node distance=1em]{\tiny(1)};
      \node[below of=a1, node distance=1em]{\tiny$E_s$};
      \node[above of=a2, node distance=1em]{\tiny(2)};
      \node[below of=a2, node distance=1em]{\tiny$E_{s-1}$};
      %\node[above of=a3, node distance=1em]{\tiny(2)};
    %  \node[below of=a3,node distance=1em]{\tiny$E_{r+1}$};     
     % \node[above of=a2,node distance=1em]{\tiny(2)};
     % \node[below of=a2,node distance=1em]{\tiny$E_r$};
      \node[above of=a3,node distance=1em]{\tiny(2)};
      \node[below of=a3,node distance=1em]{\tiny$E_{2}$};
      \node[above of=a4,node distance=1em]{\tiny(2)};
      \node[below of=a4,node distance=1em]{\tiny$E_1$};
      \draw (a0)--(a1)--(a2);
      \draw (a3)--(a4)--(a5);
      \draw[dashed] (a2)--(a3);
            \end{scope}
  \end{tikzpicture}
  \end{center}
  where the $B_j^{(s)}$'s are the strict transforms of the $B_j$'s
  ($j=1,2$), the $E_i$'s ($1\leq i\leq n$) are the exceptional curves,
  and the numbers above the nodes are the negatives of the
  self-intersections of the corresponding curves.

Let $K_{Y^{(s)}} + B_{Y^{(s)}}= h^*(K_Y+B_Y)$.  If we write
$B_{Y^{(s)}} = h^{-1}_*B_Y + \sum_{1\leq i\leq n} e_iE_i$ as the sum of
distinct components, then $e_1 = e_2=\dots=e_n=b_2>0$.
Now let
$$B_{Y^{(s)}}'=B_{Y^{(s)}}-b_2\sum_{1\leq i\leq s}\frac{i}{s} E_i=  h^{-1}_*B_Y +b_2\sum_{1\leq i \leq s} \frac{s-i}{s}E_i.$$
Since $K_{Y^{(s)}}+B_{Y^{(s)}}$ is big and nef and $B_{Y^{(s)}}'< B_{Y^{(s)}}$, one has 
\[
\vol(K_{Y^{(s)}}+B_{Y^{(s)}}')<\vol(K_{Y^{(s)}}+B_{Y^{(s)}}).
\]
On the other hand, one computes
\[
(K_{Y^{(s)}}+B_{Y^{(s)}}')^2=(K_{Y^{(s)}}+B_{Y^{(s)}})^2 -\frac{b_2^2}{s}=(K_X+B)^2-\frac{b_2^2}{s}
\]
Take $s>\max\{ \frac{b_2^2}{\epsilon}, \frac{b_2^2}{(K_X+B)^2}\}$. Then
\[
  \max\{0,  (K_X+B)^2-\epsilon  \}< (K_{Y^{(s)}}+B_{Y^{(s)}}')^2 < (K_X+B)^2.
\]
Since $h_*(K_{Y^{(s)}}+B_{Y^{(s)}}')= K_Y+B_Y$ is big, we infer that,
$K_{Y^{(s)}}+B_{Y^{(s)}}'$
is big.  Let $(X^{(s)}, B^{(s)})$ be its log canonical model.
 
One computes easily that $(K_{Y^{(s)}}+B_{Y^{(s)}}')E_i=0$ for
$1\leq i\leq s-1$.  Thus, all the exceptional curves $F\ne E_s$ of $Y^{(s)}\to X$
satisfy $(K_{Y^{(s)}}+B_{Y^{(s)}}')F\le 0$. Obviously, they
are all contracted by the morphism $g\colon Y^{(s)}\to X^{(s)}$.
 Since the coefficient of $E_s$ in $B'_{Y^{(s)}}$ is zero, one has
 $(X^{(s)}, B_{X^{(s)}})\in \ms(\mc_B)$. Moreover,
$$(K_{X^{(s)}}+B^{(s)})^2\geq(K_{Y^{(s)}}+B_{Y^{(s)}}')^2=
(K_X+B)^2-\frac{b_2^2}{s}>(K_X+B)^2 - \epsilon.$$ We take
$(X', B')=(X^{(s)}, B^{(s)})$.
\end{proof}

\begin{cor}\label{cor: nklt1}
  Let $(X, B)$ be a log canonical surface with ample $K_X+B$ and
  nonempty accessible nklt locus. For any $\epsilon>0$, there is a
  log canonical surface $(X', B')\in \ms(\mc_B)$ with ample $K_{X'}+B'$
  such that
\begin{enumerate}
\item $X'$ is birational to $X$ and $B'$ is the strict transform of $B$;
\item $(X', B')$ has no accessible nklt locus;
\item $\vol(K_X+B)-\epsilon < \vol(K_{X'}+B') <\vol(K_X+B)$.
\end{enumerate}
\end{cor}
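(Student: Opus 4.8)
The plan is to iterate Theorem~\ref{thm: nklt} until the accessible nklt locus disappears, while keeping the loss of volume under control. The point of Corollary~\ref{cor: nklt1} over Theorem~\ref{thm: nklt} is conclusion (ii): after a single application of Theorem~\ref{thm: nklt} we only know that the volume dropped and the coefficient set did not grow, but the new pair $(X', B')$ may still carry an accessible nklt center. So first I would observe that each application of Theorem~\ref{thm: nklt} strictly decreases a discrete invariant of the pair, namely the number of accessible nklt centers, so that the process terminates after finitely many steps.

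Concretely, here is how I would run the induction. Given $(X,B)$ with an accessible nklt center $Z$, apply Theorem~\ref{thm: nklt} with $\epsilon/2$ to produce $(X_1, B_1)\in\ms(\mc_B)$ with ample $K_{X_1}+B_1$, birational to $X$, $B_1$ the strict transform of $B$, and $\vol(K_X+B)-\epsilon/2<\vol(K_{X_1}+B_1)<\vol(K_X+B)$. Looking at the construction: the exceptional curve $E_s$ acquires coefficient $0$ and is contracted on $X^{(s)}$, and along the chain $E_1,\dots,E_{s-1}$ the divisor $B'_{Y^{(s)}}$ has strictly positive coefficients that are $<1$, so the new reduced boundary $\lfloor B_1\rfloor$ near the image of $Z$ is only $B_1^{(s)}$ (the strict transform of $B_1$ from the old $\lfloor B\rfloor$), now meeting the rest of $B_1$ only in the way dictated by the chain — in particular, by construction $B_1^{(s)}$ is separated from $B_2^{(s)}$ by a chain of curves of coefficient $<1$, so the intersection point responsible for $Z$ being accessible has been removed. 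Hence the pair $(X_1,B_1)$ has strictly fewer accessible nklt centers than $(X,B)$: every accessible nklt center of $(X,B)$ other than $Z$ survives (its local picture is unchanged since $f$ and the subsequent blow-ups are isomorphisms away from a neighborhood of $Z$), but $Z$ is no longer accessible, and no new accessible center is created because the only new curves have coefficient $<1$ or $0$. I would formalize this last sentence using the remark after Definition~\ref{def:accessible-center}: $(X,B)$ has nonempty accessible nklt locus iff $\supp B_Y^{>0}$ is singular at a point of $B_Y^{=1}$, and one checks that no such singularity is introduced by the construction.

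Then I would simply repeat: if $(X_1,B_1)$ still has an accessible nklt center, apply Theorem~\ref{thm: nklt} again with error $\epsilon/4$, and so on. Since the number of accessible nklt centers strictly drops at each step and is a nonnegative integer, after some $N$ steps we reach $(X_N,B_N)=(X',B')$ with no accessible nklt locus, giving (ii). The coefficient containments compose, $\mc_{B_N}\subset\dots\subset\mc_{B_1}\subset\mc_B$, so $(X',B')\in\ms(\mc_B)$, and ampleness of $K_{X'}+B'$ is preserved at each stage; this gives (i). For the volume bound (iii): the volumes form a strictly decreasing sequence $\vol(K_X+B)>\vol(K_{X_1}+B_1)>\dots>\vol(K_{X_N}+B_N)$ with total drop $\sum_{k=1}^N(\vol(K_{X_{k-1}}+B_{k-1})-\vol(K_{X_k}+B_k))<\sum_{k\ge1}\epsilon/2^k=\epsilon$, so $\vol(K_X+B)-\epsilon<\vol(K_{X'}+B')<\vol(K_X+B)$, as required. (One should be slightly careful that $\vol(K_{X_{k-1}}+B_{k-1})>0$ at every step so that Theorem~\ref{thm: nklt} applies; this is automatic because each $\vol(K_{X_k}+B_k)$ is sandwiched above $0$ by the construction, or one can just shrink the step-$k$ error to $\min\{\epsilon/2^k,\tfrac12\vol(K_{X_{k-1}}+B_{k-1})\}$.)

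The main obstacle I anticipate is the bookkeeping claim that a single application of Theorem~\ref{thm: nklt} strictly decreases the number of accessible nklt centers — i.e. that the construction removes the center $Z$ it targets and creates no new accessible center. Verifying "no new accessible center" requires understanding the local geometry of the log canonical model $(X^{(s)},B^{(s)})$ near the image of the contracted chain $E_1,\dots,E_s$: one must check that the strict transform $B_1^{(s)}$ of the old $\lfloor B\rfloor$-component either becomes disjoint from the rest of $B^{(s)}$'s positive part, or meets it only through a log terminal (hence klt) singularity, so that by the classification recalled after Definition~\ref{def:accessible-center} the resulting nklt center is inaccessible. Everything else — termination, the geometric-series estimate, the coefficient containment — is routine once this local analysis is in place.
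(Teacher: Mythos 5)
Your iteration scheme and the $\epsilon/2^k$ bookkeeping match the paper exactly, but the termination argument has a genuine gap that the paper sidesteps with a different (and much cleaner) idea.

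You claim the number of accessible nklt centers strictly drops at each application of Theorem~\ref{thm: nklt}, and you yourself flag this as the point that needs verification. It is in fact not true in general. The construction in Theorem~\ref{thm: nklt} only severs the connection between $B_1$ and $B_2$ at the single chosen intersection point $p$: after the blowups and the coefficient modification, $E_s$ has coefficient $0$, so $B_1^{(s)}$ and $B_2^{(s)}$ are no longer joined by a chain in $\supp (B'_{Y^{(s)}})^{>0}$ \emph{near the preimage of $p$}. But $B_1^{(s)}$ is still a component of coefficient $1$, and it may meet other positive-coefficient components of the boundary (or even $B_2^{(s)}$ again at a different point) elsewhere. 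In that case the very center $Z$ you targeted remains accessible on $(X',B')$, and your discrete invariant does not decrease. One could try to fix this by counting something finer — say, the number of points of $\lfloor B_Y\rfloor$ at which $\supp B_Y^{>0}$ is singular — but that requires a separate argument that the log canonical model step does not recreate such intersections, and you have not supplied it.

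The paper avoids this local analysis entirely. The volumes $\vol(K_{X^{(k)}}+B^{(k)})$ form a strictly decreasing sequence inside $\kk(\mc_B)$, which is a DCC set by Alexeev's theorem (\cite[Thm.~8.2]{alexeev1994boundedness-and-ksp-2}). A DCC set admits no infinite strictly decreasing sequence, so the process must halt after finitely many steps, and it can only halt when the accessible nklt locus is empty. This is the one idea missing from your proposal; with it, the rest of your argument (coefficient containment, ampleness, geometric series estimate) goes through verbatim.
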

\begin{proof}
  By Theorem~\ref{thm: nklt} there is a log surface
  $(X^{(1)}, B^{(1)})\in\ms(\mc_B)$ with ample $K_{X^{(1)}}+B^{(1)}$
  such that $X^{(1)}$ is birational to $X$ and $B^{(1)}$ is the strict
  transform of $B$, and
  $$\vol(K_X+B)-\frac{\epsilon}{2} < \vol(K_{X^{(1)}}+B^{(1)}) <\vol(K_X+B).$$
  If $(X^{(1)},B^{(1)})$ has nonempty accessible nklt locus, then we can apply
  Theorem~\ref{thm: nklt} again to $(X^{(1)}, B^{(1)})$ to  get
  $(X^{(2)}, B^{(2)})$.  By induction, if we
  have constructed $(X^{(s)}, B^{(s)})$ in $\ms(\mc_B)$, and if
  $(X^{(s)}, B^{(s)})$ has nonempty accessible nklt locus, then we
  can apply Theorem~\ref{thm: nklt} to $(X^{(s)}, B^{(s)})$ to obtain
  a log surface $(X^{(s+1)}, B^{(s+1)})\in\ms(\mc_B)$ with ample
  $K_{X^{(s+1)}}+B^{(s+1)}$ such
  that
  $$\vol(K_{X^{(s)}}+B^{(s)})-\frac{\epsilon}{2^{s+1}} <
  \vol(K_{X^{(s+1)}}+B^{(s+1)}) <\vol(K_{X^{(s)}}+B^{(s)}).$$

  Since $\kk(\mc_B)$ is a DCC set, this process must stop after, say,
  $N-1$ steps.
  Then $(X^{(N)}, B^{(N)})$ has
  ample $K_{X^{(N)}}+B^{(N)}$ and empty accessible nklt locus, and
  it is birational to $X$ and $B^{(N)}$ is the strict transform of
  $B$. Moreover,
$$(K_X+B)^2-\epsilon < (K_X+B)^2-\sum_{s=1}^N \frac{1}{2^s}\epsilon<(K_{X^{(N)}}+ B^{(N)})^2 <(K_X+B)^2. $$
Now we take $(X', B')=(X^{(N)}, B^{(N)}) $.
\end{proof}

\begin{cor}\label{cor: nklt2}
  Let $(X, B)$ be a log canonical surface with ample $K_X+B$ and
  nonempty accessible nklt locus. Then there is an infinite
  sequence of log canonical surfaces $(X^{(s)}, B^{(s)})\in \ms(\mc_B)$ with ample $K_{X^{(s)}}+ B^{(s)}$
  such that
\begin{enumerate}
\item $X^{(s)}$ is birational to $X$ and $B^{(s)}$ is the strict transform of $B$;
\item $(X^{(s)}, B^{(s)})$ has no accessible nklt locus;
\item the volumes $\vol(K_{X^{(s)}}+B^{(s)})$ are strictly increasing and 
\[
\vol(K_X+B)=\lim_{s\rightarrow\infty} \vol(K_{X^{(s)}}+B^{(s)}).
\]
\end{enumerate}
\end{cor}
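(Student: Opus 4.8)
The plan is to extract the sequence directly from Corollary~\ref{cor: nklt1} applied to $(X,B)$ itself, with a carefully chosen null sequence of error terms. The key observation is that one cannot iterate Corollary~\ref{cor: nklt1} on its own output: that output has empty accessible nklt locus and hence fails the hypotheses. Instead, each term of the sequence will be produced afresh from the fixed surface $(X,B)$, and the only real work is arranging the error bounds so that the resulting volumes are strictly increasing rather than merely accumulating at $(K_X+B)^2$ from below.

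Concretely, set $v := \vol(K_X+B) = (K_X+B)^2 > 0$ and build the surfaces by induction. For $s=1$, apply Corollary~\ref{cor: nklt1} with $\epsilon = \epsilon_1 := v/2$ to get $(X^{(1)}, B^{(1)}) \in \ms(\mc_B)$ with ample $K_{X^{(1)}}+B^{(1)}$, birational to $X$, with $B^{(1)}$ the strict transform of $B$, empty accessible nklt locus, and $v - \epsilon_1 < v_1 < v$ where $v_1 := \vol(K_{X^{(1)}}+B^{(1)})$. For the inductive step, suppose $(X^{(s)}, B^{(s)})$ with volume $v_s < v$ has been constructed; since $v - v_s > 0$, choose $\epsilon_{s+1}$ with $0 < \epsilon_{s+1} < \min\{v - v_s,\ 1/(s+1)\}$, and apply Corollary~\ref{cor: nklt1} with this $\epsilon$ to obtain $(X^{(s+1)}, B^{(s+1)})$ satisfying conditions (1) and (2) together with $v - \epsilon_{s+1} < v_{s+1} < v$, where $v_{s+1} := \vol(K_{X^{(s+1)}}+B^{(s+1)})$.

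It then remains to check the three asserted properties. Properties (1) and (2) hold for every term by construction. For (3), the inequality $v_{s+1} > v - \epsilon_{s+1} > v - (v - v_s) = v_s$ shows that $\{v_s\}$ is strictly increasing, while $v - 1/(s+1) < v - \epsilon_{s+1} < v_{s+1} < v$ forces $v_s \to v = (K_X+B)^2$ by the squeeze theorem. I do not expect any genuine obstacle here once Corollary~\ref{cor: nklt1} is available; the only point that needs attention is precisely the choice $\epsilon_{s+1} < v - v_s$, which converts the one-shot approximation statement into a monotone sequence, and the simultaneous constraint $\epsilon_{s+1} < 1/(s+1)$ (or any fixed null sequence) which guarantees convergence to the full volume $v$.
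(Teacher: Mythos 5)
Your proof is correct and is exactly the argument the paper leaves implicit: Corollary~\ref{cor: nklt2} is stated with no proof, being understood as an immediate consequence of Corollary~\ref{cor: nklt1}. Your key observation---that one must re-apply Corollary~\ref{cor: nklt1} to the fixed surface $(X,B)$ each time rather than iterate on the output (which has empty accessible nklt locus), and that choosing $\epsilon_{s+1}<\min\{v-v_s,\,1/(s+1)\}$ converts the one-shot approximation into a strictly increasing null-sequence converging to $v$---is precisely what the authors intend.
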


\subsection{The if part of Theorem~\ref{thm: main}}
\begin{proof}[Proof of Theorem~\ref{thm: main}: the if part]
  First we assume that $1\in\overline \mc$, so
  $\overline \mc \cup\{1\}=\overline \mc$. If $(X, B)$ satisfies
  (ii.a), then there is a birational morphism $f\colon Y\rightarrow X$
  extracting a single curve, with codiscrepancy contained in
  $\acc(\mc)$. As usual, we write $K_B+B_Y = f^*(K_X+B)$. Note that $B_Y$ is
  effective and the log surface $(Y, B_Y)$ satisfies the conditions of
  Theorem~\ref{thm: acc coe}, so we have
    \begin{math}
      (K_X+B)^2 = (K_Y+B_Y)^2\in\acc(\kk(\mc)).
    \end{math}
  If $(X,B)$ satisfies (ii.b), then by  Corollary~\ref{cor: nklt2}
  \begin{displaymath}
\vol(K_X+B)\in\acc(\kk(\mc_B)).    
  \end{displaymath}
In this case, we can assume that $(X,B)$ does not satisfy (ii.a), so
$\mc_B\subset\mc$ and hence $\vol(K_X+B)\in \acc(\kk(\mc))$.

Now we assume that $1\notin\overline \mc$.
Then each irreducible component of $\lfloor B\rfloor$ has geometric genus at
most 1 by assumption (iii).  By Corollary~\ref{cor: nklt2}, we get a
sequence $(X^{(s)}, B^{(s)})\in\ms(\mc\cup\{1\})$
with volumes converging to $(K_X+B)^2$
but with an additional
property: each component of $\lfloor B^{(s)} \rfloor$ has
geometric genus $\le1$. But since there is no accessible nklt locus,
each component of $\lfloor B^{(s)} \rfloor$ must be a smooth curve
lying in the smooth locus of $X^{(s)}$ and disjoint from the rest of
$B^{(s)}$. By adjunction, it then must have genus $\ge2$. Thus,
$\lfloor B^{(s)} \rfloor=0$ and $(X^{(s)}, B^{(s)})\in\ms(\mc)$.

\end{proof}

\begin{ex}\label{ex: Z}
First consider the log canonical surface $(\PP^1\times \PP^1, B)$, where $B$ is a reduced divisor consisting $3$ horizontal lines $L_i$ and $n\geq 3$ vertical lines $V_j$. Then $K_{\PP^1\times\PP^1}+B$ is ample and $(K_{\PP^1\times\PP^1}+B)^2= 2(n-2)$, thus exhausting all the positive even integers as $n$ varies. By Theorem~\ref{thm: main}, each of the numbers $2(n-2)$ is an accumulation point of $\kk(\mc_0)$.

Now let $\FF_1$ is the Hirzebruch surface obtained by blowing up one
point of $\PP^2$ and let
$B=\Gamma_0 + \Gamma_1+\Gamma_2 + \sum_{1\leq j\leq n} F_j$
($n\geq 2$), where $\Gamma_0$ is the section with $\Gamma_0^2=-1$,
$\Gamma_1$ and $\Gamma_2$ are two distinguished sections with
$\Gamma_1^2=\Gamma_2^2=1$, and the $F_j$'s are distinct fibers 
  not passing through $\Gamma_1\cap \Gamma_2$. Then the pair
$(\FF_1, B)$ is log canonical, has ample $K_{\FF_1}+B$ and
$(K_{\FF_1}+B)^2=2n-3 $, which exhausts all the positive odd integers
as $n$ varies. By Theorem~\ref{thm: main}, each of $2n-3$ with
$n\geq 2$ is an accumulation point of $\kk(\mc_0)$.

As a result of the above construction, we obtain $\Z_{>0}\subset\acc(\kk(\mc_0))$, which proves one part of \cite[Conjecture 3, (a)]{blache1995example-concerning}.
\end{ex}

%\begin{rmk}The limiting surface $(X, B)$ can have larger geometric genus than $(X^{(s)}, B^{(s)})$.\end{rmk}

\section{The limit surface}\label{sec: limit surf}
\begin{lem}\label{lem: fin bl}
  Let $Z$ be a projective normal surface and $B=\sum_j b_jB_j$ an
  effective $\RR$-Cartier divisor on $Z$ with $b_0<1$. Then there are
  finitely many effective resolutions $g_t\colon Z_t\rightarrow Z$,
  $1\leq t\leq r$ such that, if $g\colon Y\rightarrow Z$ is an
  effective resolution, then there is an $t$ such that the birational
  map $g_t^{-1}\circ g\colon Y\dashrightarrow Z_t $ is a morphism and
  it is an isomorphism over a Zariski open neighborhood of the strict
  transform $g_{t*}^{-1} B_0$ of $B_0$.
\end{lem}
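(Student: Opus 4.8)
The plan is to reduce to the case where $Z$ is smooth and then to bound, uniformly over all effective resolutions, the part of the resolution that takes place near $B_0$. First I would use that every resolution of a surface factors through the minimal resolution $\mu\colon Z'\to Z$, which is effective by Definition~\ref{def:effective-resolution}; if $g\colon Y\to Z$ is an effective resolution then the induced morphism $Y\to Z'$ has the same $B_Y\ge0$ and is therefore also effective, and the strict transform $\mu_*^{-1}B_0$ keeps coefficient $b_0<1$. Since strict transforms of $B_0$ under resolutions factoring through $\mu$ agree with those computed over $Z'$, it suffices to prove the statement for $(Z',\,B_{Z'})$; so from now on $Z$ is smooth and every effective resolution is a composition of blow-ups of points. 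The key elementary fact is that blowing up a point $P$ of a smooth surface carrying an effective boundary $D$ produces an effective boundary again if and only if $\operatorname{mult}_P(D)\ge1$; in particular the only points of $B_0\subset Z$ that an effective resolution can blow up lie in the finite set $\Sigma_0=\{Q\in B_0 : \operatorname{mult}_Q(B)\ge1\}$, because a general point of $B_0$ has $\operatorname{mult}_Q(B)=b_0<1$.

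Next I would show that, for a fixed effective resolution $g\colon Y\to Z$ with $C_0:=g_*^{-1}B_0$, the germ of $g$ along $C_0$ --- i.e. the morphism to $Z$ from a small Zariski neighbourhood of $C_0$ --- depends only on the subsequence of blow-ups of $g$ whose centres lie on the successive strict transforms of $B_0$. Indeed a blow-up whose centre, and all of whose infinitely near centres, avoid the strict transform of $B_0$ is an isomorphism over a neighbourhood of each point of it; and a blow-up centred on the strict transform of $B_0$ has a uniquely determined centre (or, when that strict transform acquires a node, one of two branch points), so no positive-dimensional family of choices ever arises. Thus the germ along $C_0$ ranges over a set of ``types'' indexed by finitely many combinatorial data, once we bound the length of this subsequence.

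That bound is where the hypothesis $b_0<1$ is used. Along a chain of blow-ups lying over a single point of $\Sigma_0$ with centres on the strict transform of $B_0$, the coefficient in $B_Y$ of each newly created exceptional curve equals $b_0$, plus the coefficients of the other boundary components passing through the centre, minus $1$. Since $b_0<1$, in the absence of further contributions this coefficient strictly decreases by at least $1-b_0$ at every step, and the next blow-up ceases to be allowed once it would make the boundary multiplicity at the relevant point drop below $1$. As the total ``excess multiplicity'' that the finitely many components of $B$ and the bounded numbers $\operatorname{mult}_Q(B)$ ($Q\in\Sigma_0$) can contribute is bounded in terms of $(Z,B)$ alone, the length of the subsequence is at most some $N=N(Z,B)$. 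Combined with the previous paragraph and the finiteness of $\Sigma_0$, there are only finitely many germs of effective resolutions along $B_0$. For each such germ type $t$, let $g_t\colon Z_t\to Z$ be the effective resolution obtained by blowing up on $Z$ exactly the corresponding finite string of (infinitely near) points over $\Sigma_0$ and nothing else. Given an arbitrary effective resolution $g\colon Y\to Z$ of germ type $t$ along $C_0$, the centres of the blow-ups defining $g_t$ are never infinitely near the remaining centres of $g$, so we may perform them first; hence $g$ factors through $g_t$ and $g_t^{-1}\circ g$ is a morphism. Finally, near $C_0$ and near $g_{t*}^{-1}B_0\subset Z_t$ the two resolutions present the same germ over $Z$, so $g_t^{-1}\circ g$ restricts to an isomorphism over a Zariski neighbourhood of $g_{t*}^{-1}B_0$, which is exactly the assertion.

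I expect the main obstacle to be making precise and rigorous the claims of the second and third paragraphs: one must carefully organise the --- possibly quite intricate --- tree of infinitely near centres of a general effective resolution, extract the part of it ``attached to'' $B_0$, and verify that $b_0<1$ forces precisely this part to terminate after a number of steps bounded independently of the resolution. The reduction to the smooth case and the final patching are routine manipulations with blow-ups of smooth surfaces.
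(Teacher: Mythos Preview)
Your proposal is correct and follows essentially the same approach as the paper: reduce to smooth $Z$ via the minimal resolution, then restrict attention to the effective resolutions that blow up only points on $B_0$ and its strict transforms, and use $b_0<1$ to bound the number of such blow-ups since each successive codiscrepancy eventually drops by $1-b_0$. The paper organises the last step a bit more crisply by first passing to a simple normal crossing model over each singular point $p\in B_0$ (a finite process) and then writing the codiscrepancy of the $k$-th further blow-up explicitly as $a_i-k(1-b_0)$, which immediately gives the bound; this is exactly the ``making paragraphs two and three rigorous'' that you flagged as the main obstacle.
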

\begin{proof}
  Since any resolution $f\colon Y\rightarrow Z$ dominates the minimal
  resolution of $Z$, we can replace $Z$ by its minimal resolution and
  the log canonical divisor $K_Z+B$ by its
  pull-back. In other words, we can assume that $Z$ is smooth.

  We will take $g_t\colon Z_t\rightarrow Z$ to be effective
  resolutions (cf. \eqref{def:effective-resolution})
  that only blow up
  points on $B_0$ and its strict transforms. The lemma is equivalent to
  the statement that there are only finitely many such effective
  resolutions.

  Suppose that $f\colon Y\rightarrow Z$ is such a resolution. Then
  $f$ cannot blow up any smooth point of $\supp B$ that lies in $B_0$,
  since $b_0<1$. So there are only finitely many choices for the first
  level of blow-ups at the points of $B_0$.

  Over every point $p\in B_0$ that is blown up, there are finitely
  many blow-ups that make the support of the total transform of $B$
  becomes simple normal crossing over~$p$.  Let $p_1, \dots, p_k $ be
  the inverse images of $p$ lying on the strict transform of
  $B_0$. Then the codiscrepancy of the exceptional divisor of the
  $k$-th blowup over $p_i$ and at the strict transform of $B_0$ takes
  the form $a_i - k(1-b_0)$, where $a_i$ is the coefficient of the
  (unique) boundary component in the pull-back of $K_Z+B$ that
  intersects the strict transform of $B_0$ at $p_i$. Under the
  condition that $b_0<1$ and $a_i - k(1-b_0)\geq 0$ there are only
  finitely many further blow-ups over $p_i$.
\end{proof}

Let $\mc\subset (0,1]$. We define the following subset of $\kk(\mc)$ 
\[
\kk_{\klt}(\mc) := \{(K_X+B)^2\mid (X, B)\in \ms(\mc)\, \klt, K_X+B \text{ ample }\}
\]
\begin{lem}\label{lem: key}
Let $\mc\subset(0,1]$ be a DCC set. Let $v_\infty \in \acc(\kk(\mc))$. Then there are a smooth projective surface $(Z, D)$ with $D$ a reduced  divisor, and a sequence of smooth projective surfaces $Y^{(s)}$ with a boundary divisor $B_{Y^{(s)}}$ such that 
\begin{enumerate}
\item there is a birational morphism $g_s\colon Y^{(s)}\rightarrow Z$ which only blows up the nodes of $D$ and of its the total transforms;
\item $(Y^{(s)}, B_{Y^{(s)}})$ is log canonical, $K_{Y^{(s)}}+B_{Y^{(s)}}$ is big and nef, $g_{s*} B_{Y^{(s)}}\leq D$;
\item $\vol(K_{Y^{(s)}}+ B_{Y^{(s)}})$ form a strictly increasing sequence, converging to $v_\infty$;
\item the log canonical model $(X^{(s)}, B^{(s)})$ of $(Y^{(s)}, B_{Y^{(s)}})$ lies in $\ms(\mc)$;
\item there are no ($-1$)-curves $E$ on $Y^{(s)}$ such that $(K_{Y^{(s)}}+B_{Y^{(s)}})E=0$, that is, $Y^{(s)}\rightarrow X^{(s)}$ is the minimal resolution;
\item if $v_\infty \in \kk_{\klt}(\mc) $ then, additionally, $(X^{(s)}, B^{(s)})$ is klt and it holds
\[
g_t^*(K_Z+g_{s*} B_{Y^{(s)}}) \leq K_{Y^{(t)}} + B_{Y^{(t)}}
\]
for any $s<t$.
\end{enumerate}
\end{lem}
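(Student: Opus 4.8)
The plan is to realise the desired sequence inside an accumulating chain of log canonical models, and then to rigidify all of the resolutions involved into blow-ups of a single surface, using Alexeev's boundedness theorem together with Lemma~\ref{lem: fin bl}.

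\emph{Step 1: a chain from below.} First, since $\kk(\mc)$ is a DCC set, $v_\infty$ is not the limit of any strictly decreasing sequence, so I can pick log canonical surfaces $(X_s,B_s)\in\ms(\mc)$ with $K_{X_s}+B_s$ ample and $v_s:=(K_{X_s}+B_s)^2$ strictly increasing to $v_\infty$. Pulling $K_{X_s}+B_s$ back to the minimal resolution $W_s\to X_s$ (which is effective, cf.\ Definition~\ref{def:effective-resolution}) gives a smooth surface $W_s$ with an effective boundary $B_{W_s}$ such that $K_{W_s}+B_{W_s}$ is big and nef, $\vol(K_{W_s}+B_{W_s})=v_s$, and the log canonical model of $(W_s,B_{W_s})$ is $(X_s,B_s)$. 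By the classification of log canonical surface singularities \cite{alexeev1992log-canonical-surface}, the coefficients of all the $B_{W_s}$ lie in a single DCC set $\mc^{\sharp}\supseteq\mc$, namely $\mc$ together with the codiscrepancies of log canonical surface singularities with coefficients in $\mc$.

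\emph{Step 2: the limit surface.} Next I would invoke the boundedness theorem for log canonical surfaces of bounded volume with DCC coefficients \cite{alexeev1994boundedness-and-ksp-2,alexeev2004bounding-singular}: the pairs $(X_s,\supp B_s)$ lie in a bounded family, and so -- after a simultaneous resolution over a stratum of the parameter space (Noetherian induction, then a subsequence) -- do the log smooth data $(W_s,\supp B_{W_s})$. Each $W_s$ can then be contracted, along the $(-1)$-curves meeting the rest of $\supp B_{W_s}$ transversally in exactly two points, down to a ``reduced'' model $(Z_s,D_s)$ that admits no such contraction; these $(Z_s,D_s)$ again form a bounded family of smooth pairs with $D_s$ reduced, so after a further subsequence I may assume $(Z_s,D_s)$ is a fixed pair $(Z,D)$. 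Setting $Y^{(s)}:=W_s$ and letting $g_s\colon Y^{(s)}\to Z$ be this contraction, $g_s$ becomes a composition of blow-ups at nodes of $D$ and of their successive total transforms, which is (1); items (2) and (3) follow because $K_{Y^{(s)}}+B_{Y^{(s)}}$ is big and nef with $g_{s*}B_{Y^{(s)}}\le D$ and $\vol(K_{Y^{(s)}}+B_{Y^{(s)}})=v_s\nearrow v_\infty$; and (4) holds since the log canonical model of $(Y^{(s)},B_{Y^{(s)}})$ is $(X_s,B_s)\in\ms(\mc)$.

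\emph{Step 3: minimal resolution and the klt case.} For (5), whenever a $(-1)$-curve $E\subset Y^{(s)}$ satisfies $(K_{Y^{(s)}}+B_{Y^{(s)}})E=0$ I would contract it; since blow-ups at distinct points commute, $E$ is -- after reordering -- the exceptional curve of the last blow-up in $g_s$, so the contracted surface is again a node-blow-up of $Z$, the log divisor on it pulls back to $K_{Y^{(s)}}+B_{Y^{(s)}}$, and neither the volume nor the log canonical model changes; iterating, $Y^{(s)}\to X^{(s)}$ becomes the minimal resolution. If moreover $v_\infty\in\kk_{\klt}(\mc)$, I would choose the chain in Step~1 with all $(X_s,B_s)$ klt -- the constructions realising a klt volume as an accumulation point, namely accumulating coefficients (Theorem~\ref{thm: acc coe}) and the nklt construction of Theorem~\ref{thm: nklt} applied to a klt surface, stay inside the klt locus -- so every coefficient of $D$ is $<1$. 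Then Lemma~\ref{lem: fin bl}, applied to each component of $D$, shows that over a neighbourhood of the boundary the resolutions $g_s$ come from a finite list of effective resolutions, and after a last subsequence $g_t$ dominates $g_s$ near $g_{s*}B_{Y^{(s)}}$ for $s<t$; this is precisely the inequality $g_t^{*}(K_Z+g_{s*}B_{Y^{(s)}})\le K_{Y^{(t)}}+B_{Y^{(t)}}$ in (6).

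\emph{Anticipated main obstacle.} The hard part is the passage to a fixed $(Z,D)$ in Step~2: upgrading mere boundedness of the reduced models to the statement that, along a subsequence, they all coincide (i.e.\ lie in finitely many isomorphism classes). This forces one to show that the ``bottoms'' of the accumulating chain are genuinely rigid, using not only boundedness but also the DCC property of $\mc^{\sharp}$ and the strict monotonicity of the $v_s$ to exclude moduli; carrying out this reduction and the accompanying bookkeeping for the single divisor $D$ -- and for the comparison inequality in (6), which is exactly what Lemma~\ref{lem: fin bl} is built to control -- is the real content of the lemma.
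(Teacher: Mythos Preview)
Your overall skeleton tracks the paper's route: pick an increasing sequence in $\kk(\mc)$, pass to minimal resolutions, use boundedness to land on a fixed smooth pair $(Z,D)$ reached by node-blowups, and then establish the comparison inequality. The paper's own proof is in fact almost entirely a string of citations: items (i)--(v) are packaged in \cite[Thm.~7.6]{alexeev1994boundedness-and-ksp-2} together with \cite[Rmk~5.7, Lem.~5.5--5.6]{alexeev2004bounding-singular}, and item (vi) is \cite[Thm.~8.5]{alexeev1994boundedness-and-ksp-2} (equivalently \cite[Thm.~5.8]{alexeev2004bounding-singular}). Your Steps~1--2 are an attempt to unpack those references, and your identification of the rigidification to a single $(Z,D)$ as the hard part is accurate.

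Step~3, however, contains two genuine gaps. First, your production of a klt approximating sequence is circular: you appeal to Theorems~\ref{thm: acc coe} and~\ref{thm: nklt} applied to a klt surface realizing $v_\infty$, but Theorem~\ref{thm: acc coe} requires a coefficient in $\acc(\mc)$ and Theorem~\ref{thm: nklt} requires a nonempty accessible nklt locus---a klt surface has \emph{no} nklt locus, so neither hypothesis is available, and knowing merely $v_\infty\in\kk_{\klt}(\mc)$ gives you nothing to feed into those constructions. (In the paper's application the operative hypothesis is $v_\infty\in\acc(\kk_{\klt}(\mc))$, and then a klt sequence exists tautologically.)

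Second, and more seriously, your derivation of the inequality in (vi) from Lemma~\ref{lem: fin bl} does not work. Lemma~\ref{lem: fin bl} says that effective resolutions of a \emph{fixed} pair $(Z,B)$ with $b_0<1$ factor, near the strict transform of $B_0$, through one of finitely many models; it is a statement about factorization of birational maps, not a comparison of codiscrepancies for the \emph{varying} boundaries $B_Z^{(s)}$. Even granting that after a subsequence ``$g_t$ dominates $g_s$ near the boundary'', the assertion ``this is precisely the inequality $g_t^{*}(K_Z+g_{s*}B_{Y^{(s)}})\le K_{Y^{(t)}}+B_{Y^{(t)}}$'' is a non sequitur: you would still need to compare the coefficient of every $g_t$-exceptional curve in the pullback of $B_Z^{(s)}$ against its coefficient in $B_{Y^{(t)}}$, which is exactly the content of \cite[Thm.~8.5]{alexeev1994boundedness-and-ksp-2} and is proved there by an inductive analysis along the blow-up chains. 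In the paper, Lemma~\ref{lem: fin bl} is not used in the proof of Lemma~\ref{lem: key} at all; it enters only later, inside the Claims in the proof of Theorem~\ref{thm: main}, and for a different purpose (showing that a component with limit coefficient outside $\overline\mc\cup\{1\}$ is contracted on the canonical model).
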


\begin{proof}
Since $\kk(\mc)$ is a DCC set (\cite[Theorem 8.2]{alexeev1994boundedness-and-ksp-2}), $v_\infty$
is approached by a strictly increasing sequence $(K_{X^{(s)}}+B^{(s)})^2$
in $\kk(\mc)$, where the $(X^{(s)}, B^{(s)})$'s are surfaces in $\ms(\mc)$
whose log canonical divisors $K_{X^{(s)}}+B^{(s)}$ are ample.

By \cite[Theorem~7.6]{alexeev1994boundedness-and-ksp-2} (see also \cite[Theorem~4.7]{alexeev2004bounding-singular}),
there is a diagram
\[
\begin{tikzcd}
Y^{(s)} \ar[d, "f_s"'] \ar[r, "g_s"]& Z^{(s)}\\
X^{(s)} &
\end{tikzcd}
\]
such that 
\begin{enumerate}
\item[(a)] $f_s\colon Y^{(s)}\rightarrow X^{(s)}$ is the minimal resolution of
   $(X^{(s)}, B^{(s)})$,  
\item[(b)] $g_s\colon Y^{(s)}\rightarrow Z^{(s)}$ is birational, and, defining
  $K_{Y^{(s)}}+B_{Y^{(s)}} = f_s^*(K_{X^{(s)}}+B^{(s)})$,
  $D^{(s)}=g_s(\supp B_{Y^{(s)}}\cup\exc(f_s))$, the log surfaces $(Z^{(s)}, D^{(s)})$ form a
  bounded class.
\end{enumerate}
Since the log surfaces $(Z^{(s)}, D^{(s)})$ form a bounded class, it suffices
to assume that $(Z^{(s)}, D^{(s)})$ are fixed, to be denoted by
$(Z, D)$ (\cite[Remark~5.7]{alexeev2004bounding-singular}). By
\cite[Lemma 5.5 and 5.6]{alexeev2004bounding-singular}, we can assume
that $Z$ is smooth and $g_s\colon Y^{(s)}\rightarrow Z$ blows up only
nodes of $D$ and of its total transforms.

Thus we have found $(Z, D)$ and $(Y^{(s)}, B_{Y^{(s)}})$ satisfying
(i)-(v). The last very important property (vi) is
\cite[Theorem~8.5]{alexeev1994boundedness-and-ksp-2} (see also
\cite[Theorem~5.8]{alexeev2004bounding-singular}).
\end{proof}

Now we are ready to complete the proof of Theorem~\ref{thm: main}.
\begin{proof}[Proof of the only if part of Theorem~\ref{thm: main}]
Let $v_\infty \in \acc(\kk(\mc))$. Let $(Z, D)$, $(Y^{(s)}, B_{Y^{(s)}})$ and $(X^{(s)}, B^{(s)})$ be as in Lemma~\ref{lem: key}.

\medskip

\noindent{\bf klt case.} First we assume that $v_\infty \in \acc(\kk_\klt(\mc))$. Then by Lemma~\ref{lem: key}, $(X^{(s)}, B^{(s)})$ have klt
singularities for all $s$ and for every $s<t$,
\begin{equation}\label{eq: key}
g_t^*(K_Z+B_Z^{(s)}) \leq K_{Y^{(t)}} + B_{Y^{(t)}}
\end{equation}
where the divisor $B_Z^{(s)}=g_{s*}(B_{Y^{(s)}})$. In particular, it follows
that $B_Z^{(s)}\leq B_Z^{(t)}$ for any $s<t$.

Let $B_Z=\lim_{s\to\infty} B_Z^{(s)}$. Since
$K_Z+B_Z^{(s)}$ is nef, so is their limit $K_Z+B_Z$. Moreover, the
bigness of $K_Z+B_Z^{(s)}$ implies that of $K_Z+B$ since
$K_Z+B_Z^{(s)}\leq K_Z+B$. By \eqref{eq: key} one has
$(K_{Y^{(s)}} + B_{Y^{(s)}})^2\leq (K_Z+B_Z^{(s)})^2\leq (K_{Y^{(t)}} + B_{Y^{(t)}})^2$,
one sees that
$$(K_Z+B_Z)^2=\lim_{s\to\infty} (K_Z+B_Z^{(s)})^2 = \lim_{s\to\infty}(K_{X^{(s)}}+B^{(s)})^2=v_\infty.$$ 

We need to show that $(Z, B_Z)$ is log canonical. Obviously, the
divisor $B_Z$ has coefficients in $(0,1]$. Let $E$ be a divisor over
$Z$. By \eqref{eq: key} we have, for every $s<t$,
$$b(E, Z, B_Z^{(s)})\leq b(E, Y^{(t)}, B_{Y^{(t)}})\leq 1,$$ where $b$ denotes
the codiscrepancies of the log surfaces. It follows that
$$b(E, Z, B_Z)=\lim_{s\to\infty} b(E, Z, B_Z^{(s)})\leq b(E, Y^{(t)},B_{Y^{(t)}})\leq 1.$$
Hence $(Z, B_Z)$ is log canonical.

However, it is possible that the coefficient set $\mc_{B_Z}$ is not contained in $\overline \mc\cup\{1\}$, as required. We need to contract all the components of $B_Z$ that have coefficients not in $\overline \mc\cup\{1\}$. For this, we consider $(Z, B_Z)\rightarrow (Z_\can, B_\can)=:(X, B)$, the contraction onto the log canonical model. We will show that $(X, B)\in\ms(\overline \mc\cup\{1\})$ and it satisfies the properties (i), (ii), (iii) of Theorem~\ref{thm: main}.

\medskip

\noindent{\bf Claim.} $(X, B)\in \ms(\overline\mc\cup\{1\})$.
\begin{proof}[Proof of the claim.] 
Being the log canonical model of $(Z, B_Z)$, the surface $(X, B)$ is log canonical. We need to show that $\mc_B\subset\overline \mc\cup\{1\}$, or equivalently, any component $B_j$ of $B_Z$ with coefficient $b_j\notin \overline \mc\cup\{1\}$ is
contracted by the morphism $Z\rightarrow X$.

Since $b_j<1$, up to
taking a subsequence, there is a resolution $h\colon Z'\rightarrow Z$ such that for each $s$, the birational map $g_s'=h^{-1}\circ g_s\colon Y^{(s)}\dashrightarrow Z'$ is a morphism and is an isomorphism over a Zariski open neighborhood of $h_*^{-1} B_j$ by Lemma~\ref{lem: fin bl}. 

 By taking a subsequence, we can assume that,
for any $s$, $b(B_j, K_{Y^{(s)}}, B_{Y^{(s)}})\notin \mc$, which implies that
$B_j$ is contracted by the morphism $Y^{(s)}\rightarrow X^{(s)}$ and hence
$(K_{Y^{(s)}}+ B_{Y^{(s)}})B_{j, Y^{(s)}}=0$, where $B_{j, Y^{(s)}}$ is the strict
transform of $B_j$ on $Y^{(s)}$. 

We compute $  (K_Z+B_Z)B_j$ on $Z'$. Note that, for any fixed curve $E$ on $Z'$, the codiscrepancies $b(E, Y^{(s)}, B_{Y^{(s)}})$ converge to $b(E, Z, B_Z)$. For the computation of $  (K_Z+B_Z)B_j $, only the codiscrepancies of those finitely many curves in $h^{-1}(D)$ that intersect $B_j$ are relevant. Thus, we have
\begin{multline*}
  (K_Z+B_Z)B_j = h^*(K_Z+B_Z) h_*^{-1} B_j \\= \lim_{s\rightarrow\infty} g_{s*}'(K_{Y^{(s)}}+ B_{Y^{(s)}}) h_*^{-1} B_j =
  \lim_{s\rightarrow\infty} (K_{Y^{(s)}}+ B_{Y^{(s)}})B_{j, Y^{(s)}}=0.
\end{multline*}
\end{proof}

\medskip

\noindent{\bf Claim.}  $(X, B)$ satisfies condition (ii) of Theorem~\ref{thm: main}.

\begin{proof}[Proof of the claim.]
Suppose that $(X, B)$ does not satisfy condition (ii.b). We need to show that  $(X, B)$ satisfies (ii.a). 

Since $(X, B)$ does not satisfy condition (ii.b), $\supp B_Z$ is
smooth along $\nklt(Z, B_Z)$ and so $g_s\colon Y^{(s)}\rightarrow Z$
blow up only at the klt points of $(Z, B_Z)$ by Lemma~\ref{lem:
  key}(i). By Lemma~\ref{lem: fin bl} there are only finitely many
possible blow-ups $f\colon Y\rightarrow Z$ such that the pull-back
$f^*(K_Z+B_Z)=K_Y+B_Y$ is still a log canonical divisor with an
effective boundary $B_Y$. Since $B_{Y^{(s)}}$ is effective and
$g_{s*}(B_{Y^{(s)}})\leq B_Z$, the morphisms $Y^{(s)}\rightarrow Z$
must be among these finitely many blow-ups. Thus, up to taking a
subsequence, we can assume that $Y^{(s)}=Y$ and $\supp B_{Y^{(s)}}$
are the same for all $s$.

Since $(K_{Y^{(s)}}+B_{Y^{(s)}})^2$ is strictly increasing, up to taking a subsequence, the boundary divisors $B_{Y^{(s)}}$ form a strictly
increasing sequence. There must be a component of $\supp B_{Y^{(s)}}$, say $B_0$,
that is not contracted by $f_s\colon Y^{(s)}\rightarrow X^{(s)}$ for any $s$, and
$b(B_0, Y^{(s)}, B_{Y^{(s)}})$ is strictly increasing and converging to
$b(B_0, Z, B_Z)$, the coefficient of $B_0$ in $B_Z$. Since $B_0$ is
not contracted by $f_s\colon Y^{(s)}\rightarrow X^{(s)}$ and $\mc_{B^{(s)}}\subset\mc$, its coefficient in $B_{Y^{(s)}}$ is
in $\mc$. This means $b(B_0, Z, B_Z)$ is an accumulation point of
$\mc$, and condition (ii,a) of Theorem~\ref{thm: main} is verified.
\end{proof}

\medskip

\noindent{\bf Claim.} If $1\notin\overline \mc$, then $(X, B)$ satisfies  (iii) of Theorem~\ref{thm: main}.

\begin{proof}[Proof of the claim.]
Suppose $\lfloor B\rfloor\neq 0$ and $B_0$ is a component of $\lfloor B\rfloor$. Let $B_{0,Y^{(s)}}$ be the strict transform of $B_0$ on $Y^{(s)}$. Then 
\[
\lim_{s\rightarrow\infty} b(B_{0, Y^{(s)}}, Y^{(s)}, B_{Y^{(s)}}) = 1
\]
Since $1\notin\overline \mc$, for sufficiently large $s$, $b(B_{0, Y^{(s)}}, Y^{(s)}, B_{Y^{(s)}})$ does not lie in $\mc$. On the other hand, the log canonical model $(X^{(s)}, B_s)$ lies in $\ms(\mc)$. It can only happen that $B_{0, Y^{(s)}}$ is contracted by $f_s\colon Y^{(s)}\rightarrow X^{(s)}$ to an lc singularity, and hence the geometric genus of $B_0$ (and $B_{0, Y^{(s)}}$) is at most one.
\end{proof}

In conclusion, the theorem is proved in the case when  $v_\infty \in
\acc(\kk_\klt(\mc))$. 

\medskip 

\noindent{\bf nklt case.}  Suppose that $v_\infty\in \acc(\kk(\mc))-\acc(\kk_\klt(\mc))$. Then there are at most finitely many $s$
such that $(X^{(s)}, B_s)$ is klt. By taking an
infinite subsequence, we can assume that
$\nklt(X^{(s)}, B^{(s)})\neq \emptyset$ for all $s$. 

For each $s$, let
$\mu_s\colon\bar X^{(s)}\rightarrow X^{(s)}$ be a $\Q$-factorial dlt
blowup, extracting only divisors with discrepancy $-1$. Let
$K_{\bar X^{(s)}}+B_{\bar X^{(s)}} = \mu_s^*(K_{X^{(s)}}+B^{(s)})$, and write
$B_{\bar X^{(s)}}=B_{\bar X^{(s)}}^{(1)}+B_{\bar X^{(s)}}^{(2)}$ where
$B_{\bar X^{(s)}}^{(1)}$ is the fractional part of $B_{\bar X^{(s)}}$ and
$B_{\bar X^{(s)}}^{(2)}$ the (nonzero) reduced part.

We can choose a strictly decreasing sequence $\epsilon_s\in (0,1]$
with $\lim_{s\to\infty} \epsilon_s=0$ such that
\begin{itemize}
\item $1-\epsilon_s\notin \mc$;
\item $K_{\bar X^{(s)}}+B_{\bar X^{(s)}}'$ is lc and big,
  where
  $B_{\bar X^{(s)}}'= B_{\bar X^{(s)}}^{(1)} + (1-\epsilon_s) B_{\bar
    X^{(s)}}^{(2)}$;
\item $(K_{\bar X^{(s)}}+B_{\bar X^{(s)}}')^2$ is strictly increasing and
  $\lim_{s\to\infty} (K_{\bar X^{(s)}}+B_{\bar X^{(s)}}')^2=v_\infty$.
\end{itemize}
Let $\cD=\mc\cup\{1-\epsilon_s\}_s$.
Then $\cD$ is a DCC set and
$(\bar X^{(s)}, B_{\bar X^{(s)}}')\in \ms(\cD)$ are klt. As in the klt case,
we can then construct a log canonical surface $(Z, B_Z)$ with big and nef
$K_Z+B_Z$ whose log canonical model is in
$ \ms(\overline\cD\cup\{1\})$ .

We need to show that the log canonical model $(X, B)$ of $(Z, B_Z)$ is
actually in $\ms(\overline\mc\cup\{1\})$. By construction, each
coefficient $b$ of the divisor $B$ is either in $\acc(\cD)\cup\{1\}$
or it appears in the divisors $B_{\bar X^{(s)}}'$ 
infinitely many times. Since
$\acc(\cD)=\acc(\cC)\cup \{1\}$ and each of the coefficients
$1-\epsilon_s$ appears only once, we have
$b\in \overline\cC\cup\{1\}$.

% Suppose a component $B_j$ of $B$ has coefficient $b_j\notin\overline\mc\cup\{1\}$. If the strict transform $B_{j, Y^{(s)}}$ of $B_j$ on $Y^{(s)}$ is contracted by $\bar f_s\colon Y^{(s)}\rightarrow \bar X^{(s)}$ for infinitely many $s$, then we have $(K_Z+B_Z)B_j=0$ by the same argument for the klt case. Otherwise, up to taking a subsequence, $f_s(B_{j, Y^{(s)}})$ is a component of the reduced part of $B_{\bar X^{(s)}}$ for any $s$. But then, we have $b_j=\lim_{s\to\infty} (1-\epsilon_s)=1$, a contradiction to the assumption that $b_j\notin\overline\mc\cup\{1\}$.

Similar arguments as in the klt case show that $(X, B)$ satisfies the
conditions (ii) and (iii) of Theorem~\ref{thm: main}.

\end{proof}

\section{Lower bounds of accumulation points for $\kk(\cC_2)$}
\label{sec: low bnd}
Let $\mc\subset (0,1]$ be a DCC subset. By
\cite{alexeev1994boundedness-and-ksp-2} the set $\kk(\cC)$ and so also
$\acc(\kk(\cC))$ are DCC sets. The paper
\cite{alexeev2004bounding-singular} gives an effectively computable
lower bound for $\kk(\cC)$ but, as we mentioned, it is way too small
to be useful. However, we have the following quite efficient bound:

\begin{defn}\label{defn:v1}
$$v_1(\mc)=\min\{(K_X+B)^2\mid (X, B)\in\ms(\mc\cup\{1\}), K_X+B
\text{ ample and } \lfloor B\rfloor\neq 0\}.$$  
\end{defn}

\begin{thm}[Koll\'ar \cite{kollar1994log-surfaces}, (6.2.1), (5.3.1)]
  \label{thm:v1}
  One has $v_1(\cC_2) = \frac1{1764} =
  \frac1{42^2}$. 
\end{thm}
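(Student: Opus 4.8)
The plan is to prove that $v_1(\cC_2) = \frac{1}{42^2}$ by establishing the two inequalities separately. For the upper bound $v_1(\cC_2) \le \frac{1}{42^2}$, one exhibits an explicit log canonical pair $(X, B)$ with $\lfloor B\rfloor \ne 0$, $K_X + B$ ample, and $(K_X+B)^2 = \frac{1}{42^2}$; this is Koll\'ar's Example~5.3.1 in \cite{kollar1994log-surfaces}, built by taking a reduced curve $E$ on a rational surface together with fractional boundary components arranged so that $K_X + B$ restricted to $E$ is $K_E + \Diff_E$ with degree a tiny positive number. The number $42 = 2\cdot 3\cdot 7$ arises because on $E \cong \PP^1$ the different has coefficients of the form $1 - \frac1{n}$ coming from $\cC_2'=\cC_2$, and $\frac12 + \frac23 + \frac67 = \frac{41}{42}$ realizes the smallest positive value of $1 - \sum(1-\frac1{n_i})$ over three points, giving $\deg(K_E + \Diff_E) = -2 + \frac{41}{42} + (\text{coefficient at a fourth point})$; careful bookkeeping via Lemma~\ref{lem:different} makes this exactly $\frac1{42}$ at the level of $E$, and then $(K_X+B)^2 = \frac1{42}\cdot(\text{positive rational} = \frac1{42})$ using that $nK_X+nB$ is Cartier near the relevant singularities with $n=42$.

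For the lower bound $v_1(\cC_2) \ge \frac{1}{42^2}$, the strategy is: given any $(X, B) \in \ms(\cC_2\cup\{1\})$ with $K_X+B$ ample and a reduced component $E \subset \lfloor B\rfloor$, one uses adjunction (the discussion preceding Lemma~\ref{lem:different}) to restrict to $E$. Write $E = \sum E_i$ for the irreducible (nodal) components; for each component $E_i$, $(K_X+B)|_{E_i} = K_{E_i} + \Diff_{E_i}(B - E)$, whose degree $d_i := \deg(K_{E_i} + \Diff_{E_i})$ is strictly positive (by ampleness) and lies in the derivative set $\cC_2' = \cC_2$ shifted appropriately — more precisely $d_i$ is a positive number of the form $2g(E_i) - 2 + (\text{number of nodes of } E \text{ on } E_i) + \sum b'_k$ with $b'_k \in \cC_2'$. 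The arithmetic heart is to show that the minimal positive value of such an expression is $\frac1{42}$: this is precisely the statement that $\frac1{42}$ is the smallest positive element of the relevant "degree set", which reduces to the classical fact that $\min\{1 - \frac1a - \frac1b - \frac1c > 0\} = \frac{1}{42}$ (attained at $(a,b,c) = (2,3,7)$) together with an analysis of how many fractional points and nodes can occur while keeping $K_{E_i}+\Diff_{E_i}$ of small positive degree. Then $(K_X+B)^2 \ge (K_X+B)\cdot E = \sum_i d_i \ge \frac1{42}$ is not quite enough; one needs the sharper $(K_X+B)^2 \ge \frac{1}{42^2}$, which comes from combining $(K_X+B)\cdot E \ge \frac1{42}$ with the fact that $42(K_X+B)$ is Cartier in a neighborhood of $E$ (Corollary~\ref{cor:diff-C2}), so that $42(K_X+B)\cdot C \in \Z_{>0}$ for every curve $C$ meeting $E$, forcing $(K_X+B)\cdot E \ge$ a specific value and then an intersection-theoretic/Hodge-index argument (or directly, ampleness of $K_X+B$ and $42(K_X+B)$ being an integral ample class near $E$) yielding $(K_X+B)^2 \ge \frac1{42^2}$.

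Concretely I would organize the proof as: (1) cite Koll\'ar's example for the upper bound, recomputing its volume via Lemma~\ref{lem:different} to confirm $\frac1{42^2}$; (2) for the lower bound, pass to a dlt model if necessary so that $\lfloor B\rfloor$ is well-behaved, pick a component $E$ of $\lfloor B\rfloor$, and apply adjunction; (3) prove the key arithmetic lemma that $\deg(K_{E_i}+\Diff_{E_i})$, when positive, is $\ge \frac1{42}$, by enumerating the finitely many "small" configurations — genus $0$ with three orbifold points whose coefficients are $1-\frac1{a_1}, 1-\frac1{a_2}, 1-\frac1{a_3}$ (or fewer points but then the degree is at least $1$ or $\frac12$), using $2g-2+\sum(1-\frac1{a_i}) > 0 \Rightarrow \ge \frac1{42}$; (4) upgrade $(K_X+B)\cdot E \ge \frac1{42}$ to $(K_X+B)^2 \ge \frac1{42^2}$ using that $42(K_X+B)$ is Cartier near $E$ and ample. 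The main obstacle will be step (4): bridging from the "one-dimensional" bound $(K_X+B)\cdot E \ge \frac1{42}$ to the "two-dimensional" bound $(K_X+B)^2 \ge \frac1{42^2}$, because a priori $(K_X+B)^2$ could be much smaller than $\big((K_X+B)\cdot E\big)^2$. The resolution uses the integrality of $42(K_X+B)$ in a neighborhood of $E$: writing $L = 42(K_X+B)$, $L$ is an ample (near $E$) integral class with $L\cdot E \ge 1$, and one shows $L^2 \ge 1$ either by Hodge index applied on a suitable birational model where $E$ and $L$ can be compared, or — more robustly — by invoking the general principle (as in Koll\'ar's and Alexeev's work, e.g. \cite{kollar1994log-surfaces} (6.2)) that an ample $\bQ$-Cartier divisor with Cartier index dividing $42$ along its non-klt locus has volume $\ge \frac1{42^2}$ once it has positive degree on that locus. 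I expect the bulk of the genuinely new argument to be the careful extraction of the constant $42$ from the derivative set $\cC_2'$ and verifying no smaller positive degree is possible, which is where the enumeration in step (3) must be airtight.
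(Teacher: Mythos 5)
The paper does not reprove this statement: it is cited verbatim from Koll\'ar \cite{kollar1994log-surfaces}, with (5.3.1) supplying the extremal example that gives the upper bound and (6.2.1) supplying the lower bound. So there is no in-paper argument to compare against; one can only assess your reconstruction of Koll\'ar's proof. Your steps (1)--(3) are on target: the upper bound is indeed Koll\'ar's explicit example, and the adjunction/derivative-set computation showing $\deg(K_{E}+\Diff_E) \ge \frac1{42}$ (equivalently $(K_X+B)\cdot E \ge \frac1{42}$), coming from the classical fact that $\min\{1-\frac1a-\frac1b-\frac1c>0\}=\frac1{42}$, is correct.

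Step (4), however, has a genuine gap, and the repairs you sketch do not work. Hodge index does not point in the right direction (when $E^2<0$, which is the relevant case, it gives nothing; when $E^2>0$ it gives an \emph{upper} bound on $(K_X+B)^2$). And ``$42(K_X+B)$ Cartier near $E$ and ample, so $L^2\ge 1$'' is false as stated — Cartierness only \emph{near} $E$ does not force $(42(K_X+B))^2\in\bZ$, and your fallback of invoking Koll\'ar (6.2) as a black box is circular, since (6.2.1) \emph{is} the statement being proved. The missing ingredient is the second, independent $\frac1{42}$-bound from Koll\'ar's Theorem~5.3: the number $\delta_1(\cC_2)=\sup\{t:K_X+B-tE \text{ is big}\}$ satisfies $\delta_1(\cC_2)\ge\frac1{42}$. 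For $t<\delta_1$ the divisor $K_X+B-tE$ is pseudo-effective, so pairing with the nef class $K_X+B$ yields $(K_X+B)^2\ge t\,(K_X+B)\cdot E$, whence
\[
(K_X+B)^2 \;\ge\; \delta_1(\cC_2)\,\cdot\,(K_X+B)\cdot E \;\ge\; \tfrac1{42}\cdot\tfrac1{42} \;=\; \tfrac1{42^2}.
\]
This is exactly the mechanism the paper itself uses later in the proof of Theorem~\ref{thm:bounds}(i) (where $(K_X+B)B_0\ge1$ there gives the stronger bound $\frac1{42}$). Without $\delta_1\ge\frac1{42}$ the one-dimensional inequality $(K_X+B)\cdot E\ge\frac1{42}$ alone does not control the volume, which is precisely the obstacle you flagged but did not resolve.
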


By Corollary~\ref{cor: =},
$\min\acc(\kk(\cC_2))= \min \kk_\nklt(\cC_2)$, where for the latter we
only consider log surfaces $(X,B)$ with $\nklt(X,B)\ne\emptyset$. If
$\lfloor B\rfloor\ne0$ then we are done by \eqref{thm:v1}. Below, we
deal with the remaining case: $\lfloor B\rfloor = \emptyset$ but
$(X,B)$ has an isolated nklt center, a point $p\in X$. The number
$t_m(\cC)$ in the theorem below was defined in
Section~\ref{sec:preliminaries}. 

\begin{thm}\label{thm: low bnd}
  Let $(X,B=\sum b_jB_j)$ be a log canonical surface with
  coefficients in a DCC set $\mc$. Suppose that $K_X+B$ is ample
  and that 
  $H^0(X, \lfloor m(K_X+B) \rfloor)\ne 0$. Then
  \begin{displaymath}
    \vol(K_X+B) \ge \frac{v_1(\mc)}{(1+mt_m(\cC))^2}.
  \end{displaymath}
\end{thm}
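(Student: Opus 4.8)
The plan is to feed the pluricanonical section back into the boundary, use the ACC bound $t_m=t_m(\mc)$ from Lemma~\ref{lem:two-sets} to round the coefficients of $B$ up to $1$, and then read the inequality off from the definition of $v_1(\mc)$.

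First I would dispose of the trivial case: if some $b_j=1$, then $(X,B)$ itself lies in $\ms(\mc\cup\{1\})$ with $K_X+B$ ample and $\lfloor B\rfloor\ne0$, so $v_1(\mc)\le(K_X+B)^2=\vol(K_X+B)$ and the asserted inequality holds as $1+mt_m\ge1$. So assume all $b_j<1$. Fixing a canonical divisor, the hypothesis $H^0(X,\lfloor m(K_X+B)\rfloor)\ne0$ produces an effective divisor $D\sim mK_X+\lfloor mB\rfloor$; writing $\{mB\}=\sum_j\{mb_j\}B_j$, one gets an effective divisor
\begin{equation*}
  N:=D+\{mB\}\ \sim\ m(K_X+B),\qquad N\ne 0,
\end{equation*}
the nonvanishing because $(m(K_X+B))^2>0$.

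The heart of the argument is the following construction. Let $\pi\colon W\to X$ be a log resolution of $(X,B+t_mN)$, write $K_W+\Theta_W=\pi^*(K_X+B+t_mN)$ for the crepant transform, and let $\Gamma_W$ be obtained from $\Theta_W$ by truncating every coefficient into $[0,1]$. Then $(W,\Gamma_W)$ is log canonical, being a smooth surface with an snc boundary of coefficients $\le1$, and I would record three facts.
\begin{enumerate}
\item $\lfloor\Gamma_W\rfloor\ne0$: by definition of $t_m$, $b_j+t_m\{mb_j\}\ge1$ whenever $\{mb_j\}\ne0$, and every component of $D$ enters $B+t_mN$ with coefficient $\ge t_m\ge1$; since $N\ne0$, either some $\{mb_j\}\ne0$ or $D\ne0$, so some component of $\Gamma_W$ has coefficient exactly $1$.
\item $K_W+\Gamma_W$ is big: its boundary dominates the crepant transform of $(X,B)$, so $K_W+\Gamma_W\ge\pi^*(K_X+B)$.
\item $\vol(K_W+\Gamma_W)\le(1+mt_m)^2(K_X+B)^2$: truncating $\Theta_W$ subtracts from $\pi^*(K_X+B+t_mN)$ an effective divisor (where $\Theta_W$ exceeded $1$) and adds an effective $\pi$-exceptional one (where $\Theta_W$ was negative, which can only happen on exceptional primes); an effective exceptional divisor does not change the volume of a pullback, so $\vol(K_W+\Gamma_W)\le\vol\bigl(\pi^*(K_X+B+t_mN)\bigr)=\vol(K_X+B+t_mN)=(1+mt_m)^2(K_X+B)^2$, the last step because $N\equiv m(K_X+B)$ exhibits $K_X+B+t_mN$ as numerically a positive multiple of the ample class $K_X+B$.
\end{enumerate}

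Finally I would pass to the log canonical model $(X',B')$ of $(W,\Gamma_W)$: then $K_{X'}+B'$ is ample with $(K_{X'}+B')^2\le(1+mt_m)^2\vol(K_X+B)$, and the statement follows from Definition~\ref{defn:v1} once one knows $(X',B')\in\ms(\mc\cup\{1\})$ with $\lfloor B'\rfloor\ne0$. This last point is the step I expect to be the main obstacle. The coefficients of $\Gamma_W$ that could fall outside $\mc\cup\{1\}$ sit only on $\pi$-exceptional primes left untruncated; one has to argue that along such curves $K_W+\Gamma_W$ differs from the pullback $\pi^*(K_X+B)$ only by exceptional corrections and is therefore $(K_W+\Gamma_W)$-trivial, so that these curves are contracted by the log canonical model and the surviving coefficients lie in $\mc\cup\{1\}$; and one must check that the coefficient-$1$ component found in (1) is not contracted, i.e. meets $K_W+\Gamma_W$ positively. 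Everything else — producing $D$, the relation $N\sim m(K_X+B)$, and the role of $t_m$ in pushing a coefficient up to exactly $1$ — is routine once this framework is set up.
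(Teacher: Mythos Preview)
Your overall strategy matches the paper's closely: feed the section into the boundary, use $t_m$ to push coefficients up to $1$, truncate, and compare with $v_1(\mc)$. The difference is in how you pass to a model where the truncated boundary has coefficients in $\mc\cup\{1\}$, and this is exactly where your argument has a genuine gap.

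You take a log resolution $\pi\colon W\to X$ and truncate $\Theta_W$ to get $\Gamma_W$. As you correctly anticipate, $\pi$-exceptional primes $E$ with $0<\theta_E<1$ keep their coefficient $\theta_E$, which need not lie in $\mc\cup\{1\}$. Your proposed remedy --- that $K_W+\Gamma_W$ is trivial on such $E$ so they get contracted on the log canonical model --- does not work: writing $K_W+\Gamma_W=\pi^*(K_X+B+t_mN)-P+Q$ with $P=(\Theta_W-1)^+$ and $Q=(-\Theta_W)^+$ exceptional, one has $(K_W+\Gamma_W)\cdot E=-P\cdot E+Q\cdot E$, and there is no reason for this to be $\le 0$. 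An untruncated exceptional curve may well meet components of $P$ (those pushed down to $1$) positively and survive on the canonical model with a bad coefficient.

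The paper sidesteps this entirely by taking a \emph{dlt blowup} $f\colon Y\to X$ of $(X,D)$ rather than a log resolution. By definition of a dlt blowup, every $f$-exceptional divisor has codiscrepancy $\ge 1$, so after truncation all exceptional coefficients become exactly~$1$; no stray coefficients outside $\mc\cup\{1\}$ ever appear. The paper also chooses $D=C+\max\!\big(B,\lceil\{mB\}\rceil\big)$ so that already on $X$ the coefficients of $D$ are either $\ge 1$ or equal to some $b_j\in\mc$ with $\{mb_j\}=0$; your divisor $B+t_mN$ shares this property, so it would work too --- the essential fix is the dlt blowup. For your second obstacle (why some coefficient-$1$ component survives on the canonical model), the paper observes that $\lfloor D\rfloor\ge C+\{mB\}\sim_{\bR} m(K_X+B)$, hence $\lfloor D'_Y\rfloor$ supports a big divisor and cannot be entirely contracted.
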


\begin{proof}
  Let $C\sim \lfloor m(K_X+B) \rfloor$ be an effective curve. Hence,
  we have the relation $m(K_X+B) \sim_\R C + \{mB\}$.  Now define
  \begin{displaymath}
    D = C + \max(B,\lceil \{mB\} \rceil).    
  \end{displaymath}

  If $B=\sum b_jB_j$ then the coefficient of $B_j$ in
  $\max(B, \lceil \{mB\} \rceil)$ is $b_j$ if $\{mb\}=0$ and~$1$ if
  $\{mb\}\ne0$.
  Thus, $K_X+D\ge K_X + B$, and the integral part of $D$ satisfies
  $\lfloor D\rfloor \ge m(K_X+B)$, so $\lfloor D\rfloor$ supports a big divisor.
  Note also that $K_X+D$ is $\bR$-Cartier because $B$ does not pass through
  the non $\bQ$-factorial singularities of $X$ since $(X,B)$ is lc.
  
  We want to bound the divisor $K_X + D$ from above in terms of
  $K_X+B$.  We search for a number $t$ such that
  \begin{eqnarray*}
    & (1 + mt)(K_X + B) \ge K_X + D \iff \\
    & B + t( C + \{mB\} ) \ge C + \max(B, \lceil \{mB\} \rceil)
  \end{eqnarray*}
  This is true for as long as $t\ge1$ and for every $b\in\cC$ with
  $\{mb\}\ne0$ one has $b + t \{mb\} \ge 1$. By
  Lemma~\ref{lem:two-sets} this holds if we set $t=t_m(\cC)$.  This
  gives us
  \begin{equation}\label{eq:vols}
    (1+mt_m)^2 \vol (K_X+B) \ge \vol (K_X+D). 
  \end{equation}

  The log surface $(X,D)$ may not be log canonical.
  Let $f\colon Y\to X$ be a dlt blowup as in Definition
  \ref{def:dlt-blowup}. Let $K_Y+D_Y = f^*(K_X+D)$.
  If $D_Y = \sum d_jD_j + \sum e_iE_i$ then one has $e_i\ge1$. Define 
  $D'_Y = \sum \min(1,d_j)D_j + \sum \min(1,e_i)E_i$. Then we have  
  \begin{displaymath}   
   f^*(K_X + D)  \ge K_Y + D'_Y \ge f^*(K_X+B), 
  \end{displaymath}
  the latter being true because $K_X+D \ge K_X + B$ and $(X,B)$ is
  log canonical. 

  This implies that the divisor $K_Y+D'_Y$ is big. Its reduced part
  $\lfloor D'_Y\rfloor = f^{-1} (\lfloor D\rfloor)$ supports a big
  divisor. Thus, on the log canonical model $(Z, D_Z)$ of $(Y, D'_Y)$ the
  reduced part $\lfloor D_Z\rfloor \ne0$. We have
  \begin{math}
    \vol (K_Y + D'_Y) = \vol (K_Z + D_Z) 
  \end{math}
  and by Theorem~\ref{thm:v1} $\vol (K_Z + D_Z) \ge v_1(\mc)$.
  All together, 
  \begin{displaymath}
    \vol(K_X+D) = \vol(   f^*(K_X + D)) \ge \vol(K_Y+D'_Y) \ge v_1(\mc)
  \end{displaymath}
  Together with equation \eqref{eq:vols} this proves the statement.
\end{proof}

Next, we find a section  of a multiple of $K_X+B$ provided that
$\nklt(X,B)$ contains an isolated point.  

\begin{prop}\label{prop: nonvan}
  Let $(X,B)$ be in $\ms(\mc_2)$ such that $K_X+B$ is ample.
Let $p $ be an isolated nklt center of
  $(X,B)$. Assume that $m(K_X+B)$ is integral and Cartier near the
  point $p$. Then
\begin{enumerate}
\item If $m\geq 2$ then 
$\dim H^0(X, \lfloor (m(K_X+B)\rfloor )\geq 1$.
\item If $m=1$ then $\dim H^0(X, \lfloor (2(K_X+B)\rfloor )\geq 1$.
\end{enumerate}
\end{prop}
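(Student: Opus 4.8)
The plan is to produce a global section of a rounding of $m'(K_X+B)$, where $m'=m$ if $m\ge 2$ and $m'=2$ if $m=1$, by a multiplier‑ideal / Nadel‑vanishing argument anchored at the nklt center $p$. In case (2) the hypothesis says $K_X+B$ is integral and Cartier near $p$, so in all cases $m'(K_X+B)$ is integral and Cartier near $p$ and $(m'-1)(K_X+B)$ is ample; it therefore suffices to show $H^0\big(X,\lfloor m'(K_X+B)\rfloor\big)\ne 0$.

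First I would pass to a $\bQ$‑factorial dlt blowup $\mu\colon\bar X\to X$ extracting exactly the divisors of discrepancy $-1$ (Definition~\ref{def:dlt-blowup}), set $K_{\bar X}+B_{\bar X}=\mu^*(K_X+B)$, $\Gamma=B_{\bar X}^{=1}$, and let $\Gamma_p\subseteq\Gamma$ be the nonempty reduced curve lying over $p$. Since $p$ is an \emph{isolated} nklt center, no coefficient‑$1$ component of $B$ passes through $p$, so by the classification of log canonical surface singularities (\cite{alexeev1992log-canonical-surface}; cf. Lemma~\ref{lem:different}) the curve $\Gamma_p$ is a genus‑one configuration — a smooth elliptic curve, a cycle of rational curves, or a rational curve carrying the orbifold structure of a quotient of such — and is disjoint from $\mathrm{Supp}\,B_{\bar X}^{<1}$ and from $\Gamma-\Gamma_p$. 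As $m'(K_X+B)$ is Cartier near $p$, the divisor $\mu^*(m'(K_X+B))$ is Cartier, hence trivial, in a neighbourhood of $\Gamma_p$; so $N:=\big\lfloor\mu^*(m'(K_X+B))\big\rfloor$ is an integral divisor on $\bar X$ with $N|_{\Gamma_p}\cong\mathcal O_{\Gamma_p}$, and pushing forward gives $H^0(\bar X,N)\hookrightarrow H^0\big(X,\lfloor m'(K_X+B)\rfloor\big)$.

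The key is the identity $N=K_{\bar X}+\Gamma+R+(m'-1)\mu^*(K_X+B)$ on $\bar X$, with $R=\sum_{b_j<1}\big(b_j-\{m'b_j\}\big)\mu^{-1}_*B_j$. Here $\cC=\cC_2$ enters through the elementary fact that $\{m'b\}\le b$ for all $b\in\cC_2$, $m'\in\bN$ (for a general DCC set one would instead appeal to Lemma~\ref{lem:two-sets} and $t_{m'}(\cC_2)=1$, Lemma~\ref{lem:t_m}), so that $R\ge0$ with $\lfloor R\rfloor=0$; moreover $R$ is disjoint from $\Gamma_p$. Put $\Theta=(\Gamma-\Gamma_p)+R$; then $\Theta\le B_{\bar X}$ (so $(\bar X,\Theta)$ is log canonical) and $\Theta$ is disjoint from $\Gamma_p$. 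The adjunction sequence for the multiplier ideals of $(\bar X,\Theta)$ and $(\bar X,\Gamma_p+\Theta)$, twisted by $\mathcal O_{\bar X}(N)$, reads
\[
0\to\mathcal J(\bar X,\Gamma_p+\Theta)\otimes\mathcal O_{\bar X}(N)\to\mathcal J(\bar X,\Theta)\otimes\mathcal O_{\bar X}(N)\to\mathcal O_{\Gamma_p}\big(N|_{\Gamma_p}\big)\to0,
\]
the last term being $\mathcal O_{\Gamma_p}$ because the different $\mathrm{Diff}_{\Gamma_p}(\Theta)$ has all coefficients $<1$ and $N|_{\Gamma_p}$ is trivial. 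Since $N-(K_{\bar X}+\Gamma_p+\Theta)=(m'-1)\mu^*(K_X+B)$ is nef and big, Nadel vanishing gives $H^1\big(\bar X,\mathcal J(\bar X,\Gamma_p+\Theta)\otimes\mathcal O_{\bar X}(N)\big)=0$, so $H^0\big(\bar X,\mathcal J(\bar X,\Theta)\otimes\mathcal O_{\bar X}(N)\big)$ surjects onto $H^0(\Gamma_p,\mathcal O_{\Gamma_p})\ne0$; as $\mathcal J(\bar X,\Theta)\subseteq\mathcal O_{\bar X}$ this group sits inside $H^0(\bar X,N)\subseteq H^0\big(X,\lfloor m'(K_X+B)\rfloor\big)$, which is therefore nonzero.

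The step I expect to cause trouble is the validity of Nadel / Kawamata--Viehweg vanishing in the arbitrary characteristic of the paper; for the surfaces and nef‑and‑big divisors at hand one should be able to substitute the logarithmic surface vanishing statements already underlying \cite{alexeev1994boundedness-and-ksp-2} and \cite{kollar1994log-surfaces}, or replace the cohomological input by a direct Riemann--Roch estimate on $\bar X$ and on the arithmetic‑genus‑one curve $\Gamma_p$ (using that $N|_{\Gamma_p}$ is trivial). A second, routine but delicate, point is to carry out the multiplier‑ideal computation and the twist by $\mathcal O_{\bar X}(N)$ correctly near the (klt) singularities of $\bar X$ — note that $N$ is only $\bQ$‑Cartier globally, though Cartier near $\Gamma_p$ — and near the other nklt points of $(X,B)$; choosing the \emph{dlt} blowup rather than a full resolution is precisely what keeps $R$ free of exceptional components and $\Theta$ disjoint from $\Gamma_p$, so that only the harmless local situation around $\Gamma_p$ is relevant to the surjectivity.
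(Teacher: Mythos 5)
Your overall strategy (restrict to the nklt curve over $p$, where the relevant line bundle is trivial, and use a vanishing theorem to lift a section) is the same as the paper's, but you route it through a dlt blowup and multiplier-ideal/Nadel machinery, whereas the paper works directly on a smooth log resolution $f\colon Y\to X$ with $E=B_Y^{=1}$, uses the elementary sequence $0\to\cO_Y(-E)\to\cO_Y\to\cO_E\to0$ twisted by $L=\cO_Y(K_Y+\lceil(m-1)(K_Y+B_Y)\rceil+E)$, applies Kawamata--Viehweg to kill $H^1(Y,L(-E))$, and notes that $L|_{E_0}$ is trivial near the component $E_0$ over $p$. The paper's computation hinges on the identity $\lceil(m-1)b\rceil=\lfloor mb\rfloor$ for $b\in\cC_2$, which plays the same role as your inequality $\{m'b\}\le b$.

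There are two places where your argument, as written, has a gap or error. First, the claim that $\Gamma_p$ is ``disjoint from $\supp B_{\bar X}^{<1}$'' is false: take $X$ smooth at $p$ with $B=\tfrac23(L_1+L_2+L_3)$ three concurrent lines; the dlt blowup is the single blowup of $p$, $\Gamma_p$ is the exceptional $(-1)$-curve, and it meets all three strict transforms $\mu_*^{-1}L_j$, which lie in $\supp B_{\bar X}^{<1}$. Consequently $R$ need not be disjoint from $\Gamma_p$ either. In this particular example the conclusion you actually use (that $\mathrm{Diff}_{\Gamma_p}(\Theta)$ has coefficients $<1$) survives, because $R$ has coefficients $b_j-\{m'b_j\}\le b_j<1$ and the dlt condition keeps the total different $<1$ away from the nodes of $\Gamma_p$ --- but you should argue this rather than derive it from a disjointness that fails. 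Second, the ``adjunction exact sequence for multiplier ideals'' with third term exactly $\cO_{\Gamma_p}(N|_{\Gamma_p})$ is not a citable off-the-shelf theorem: it must be checked on a log resolution, and you are working over a \emph{singular}, only $\bQ$-factorial $\bar X$ with $\Gamma_p$ possibly a nodal cycle of $\bP^1$'s (when $p$ is a cusp singularity) and $N$ only locally Cartier; the standard restriction/adjunction results for multiplier ideals give an inclusion, not an exact sequence, and the surface-case derivation you would need is essentially equivalent to passing to a log resolution --- at which point you have reproduced the paper's elementary argument with extra overhead. You also correctly flag that both Nadel vanishing and Kawamata--Viehweg are characteristic-zero tools; this caveat applies equally to the paper's proof, so it is not a point of divergence. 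In short, the approach is workable but as written the disjointness claim is wrong and the key exact sequence is asserted rather than established; the paper sidesteps both by working on a smooth resolution where $\cO_Y(-E)\to\cO_Y\to\cO_E$ needs no justification.
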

\begin{proof}
  Part (ii) is obtained from (i) by using $2(K_X+B)$, so we only need to prove (i).
  Let $f\colon Y\rightarrow X$ be a log resolution. Write
  $K_Y+B_Y=f^*(K_X+B)$ and $E:=B_{Y}^{=1}$. By assumption, the divisor
  $E$ is reduced, nonempty, and has a connected component contained
  in $f\inv(p)$.  Since $(m-1)(K_Y+B_Y)$ is big and nef, we
  have $$H^1(Y, K_Y+\lceil (m-1)(K_Y+B_Y\rceil))=0$$ by the
  Kawamata--Viehweg vanishing theorem. The standard exact sequence
  $0\to \cO_Y(-E)\to \cO_Y\to \cO_E\to 0$ together with the above
  vanishing gives
  \begin{displaymath}
    H^0(Y, K_Y+\lceil (m-1)(K_Y+B_Y)\rceil + E)
    \onto 
    H^0(E, (K_Y+\lceil (m-1)(K_Y+B_Y)\rceil+ E )\restr{E})
  \end{displaymath}
  Let $E_0$ be a connected component of $E$ contained in
  $f\inv(p)$. The coefficients of the divisors $B_j$ in $B_Y$ which
  intersect $E_0$ satisfy $0\le b_j<1$; they appear in the
  different. 
  Since $m(K_X+B)$ is integral and Cartier at $p$, it
  follows that in a neighborhood of $E_0$ one has
  \begin{displaymath}
    K_Y+\lceil (m-1)(K_Y+B_Y)\rceil+ E = m(K_Y+B_Y)
  \end{displaymath}
  and is trivial. Therefore, the above two $H^0$ groups are
  nonzero. Thus, the divisor 
  \[
    f_*(K_Y+\lceil (m-1)(K_Y+B_Y)\rceil + E)=
    K_X + \lceil (m-1)(K_X+B)\rceil + B^{=1}
  \]
  is effective. Now it is easy to see that for a number of the form
  $b=1-\frac1{n}$, $n\in\bN$ and for any $m\in\bN$ one has
  $\lceil (m-1)b\rceil = \lfloor mb\rfloor$. Thus, the last divisor equals
  \begin{displaymath}
    K_X + \lceil (m-1)(K_X+B)\rceil + B^{=1} =
    \lfloor m(K_X+B) \rfloor.
  \end{displaymath}
\end{proof}

\begin{rmk}\label{rmk: nonvan}
Let $(X,B)\in \ms(\mc_2)$ with ample $K_X+B$ and  $k$ distinct isolated nklt centers $p_i, 1\leq i\leq k$. Assume that $m(K_X+B)$ is integral and Cartier near each point $p_i$.  Then the same argument of Proposition~\ref{prop: nonvan} gives:
\begin{enumerate}
\item If $m\geq 2$ then 
$\dim H^0(X, \lfloor (m(K_X+B)\rfloor )\geq k$.
\item If $m=1$ then $\dim H^0(X, \lfloor (2(K_X+B)\rfloor )\geq k$.
\end{enumerate}
\end{rmk}

\begin{prop}\label{prop:fixed-multiple}
  For each DCC set $\mc\subset(0,1]\cap\bQ$, there exists 
  $m(\cC)\in\bN$ such that for any  log surface $(X,B)$ in $\ms(\mc)$ with ample
  $K_X+B$ and a point $p\in X$ that is an isolated nklt
  center of $(X,B)$, the divisor $m(\cC)(K_X+B)$ is integral and
  Cartier near $p$. 
\end{prop}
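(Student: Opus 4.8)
The plan is to reduce the statement to a \emph{local} classification of log canonical surface singularities with an isolated nklt center and coefficients in the DCC set $\cC$. Fix a log surface $(X,B)\in\cS(\cC)$ with ample $K_X+B$ and an isolated nklt center $p\in X$. The rational number we need to control — the smallest $m$ with $m(K_X+B)$ integral and Cartier near $p$ — depends only on the analytic (or formal) germ $(X,B,p)$ together with the local Zariski-decomposition data, i.e. on the coefficients of the components of $B$ through $p$ and on the resolution graph of $p\in X$. Concretely, taking a log resolution $f\colon Y\to X$ over a neighborhood of $p$ and writing $K_Y+B_Y=f^*(K_X+B)$, the index of $K_X+B$ at $p$ is the least common denominator of the (rational) discrepancies $a(E_i,X,B)$ together with the coefficients $b_j$ of the curves $B_j\ni p$. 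So it suffices to bound this common denominator uniformly over all germs of isolated nklt centers with coefficients in $\cC$.

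The key step is the following boundedness/finiteness statement: for a DCC set $\cC\subset(0,1]\cap\bQ$, there are only finitely many possibilities for the ``decorated resolution graph'' of a log canonical germ $(X,B,p)$ in which $p$ is an isolated nklt center, \emph{after discarding the finitely many coefficients in $B$ that are irrelevant to the germ near $p$}. This is where I would invoke the work already cited in the paper: by \cite{alexeev1994boundedness-and-ksp-2} (boundedness of log canonical surfaces, and in particular the local boundedness of their singularities with DCC coefficients) the set of log canonical germs with an isolated nklt center and coefficients in $\cC$ forms a bounded family; one can also cite the classification of log canonical surface singularities from \cite{alexeev1992log-canonical-surface}. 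The point of the DCC hypothesis is that the coefficients $b_j$ of the finitely many curves passing through $p$ cannot get arbitrarily small (that would force either more complicated graphs, contradicting boundedness, or coefficients strictly between $0$ and the smallest element of $\cC$, which is excluded). Hence these $b_j$ range over a finite subset of $\cC$, the resolution graphs of $p\in X$ range over a finite set, and the discrepancies $a(E_i,X,B)$ — being the solution of a fixed finite linear system with entries in that finite set — have denominators lying in a finite set. One then takes $m(\cC)$ to be the lcm of all these finitely many denominators (together with the denominators of the finitely many relevant coefficients).

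A cleaner packaging of the same idea, which I would actually carry out, uses Corollary~\ref{cor:diff-C2}/Lemma~\ref{lem:different} and the derivative set: restricting $K_X+B$ to a suitable curve through $p$ (or analyzing the reduced part of a dlt blowup) expresses the local index of $K_X+B$ in terms of the index $m$ of the singular points $Q_k$ appearing in the resolution and the coefficients of the different, which lie in $\cC'$; since $\cC'$ is again DCC (as noted after its definition) and the relevant configurations are bounded, only finitely many indices $m$ occur. Concretely I would: (1) pass to a $\bQ$-factorial dlt blowup $\mu\colon\bar X\to X$ extracting exactly the divisors of discrepancy $-1$ over $p$, so that $\bar B$ has a reduced curve $E$ mapping to $p$; (2) apply adjunction along $E$ and Lemma~\ref{lem:different} to reduce to the local indices of the points of $E$, governed by $\cC'$; (3) use boundedness to see only finitely many local germs occur; (4) set $m(\cC)$ equal to the lcm of the resulting finite set of indices. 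The main obstacle is step (3): making precise that ``isolated nklt center + lc + DCC coefficients'' forces a bounded germ, i.e. extracting from \cite{alexeev1994boundedness-and-ksp-2} exactly the local finiteness one needs — one has to be a little careful that the ambient surface $X$ is global and ample $K_X+B$ is what enters the boundedness statement there, but since we only care about the formal germ at $p$ this is routine once one observes that the germ of an isolated nklt center is itself a log canonical germ with DCC coefficients and the finiteness of such germs (up to the choice of the finitely many coefficients through $p$) is exactly the content of the classification plus the DCC hypothesis.
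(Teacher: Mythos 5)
Your outline correctly identifies the right tools (dlt blowup, adjunction along the reduced exceptional divisor $E$, the derivative set $\cC'$, Lemma~\ref{lem:different}), and your ``cleaner packaging'' is quite close to the paper's actual argument through steps (1), (2), and (4). The gap, which you yourself flag as ``the main obstacle,'' is step (3): the claim that log canonical germs with an isolated nklt center and DCC coefficients form a bounded family, i.e.\ that only finitely many decorated resolution graphs occur. That claim is false. Already for $\cC=\emptyset$, a cusp singularity has a cycle of $\bP^1$'s of arbitrary length as its exceptional locus, and one can likewise produce isolated nklt centers whose dlt-blowup exceptional curve $E$ is a chain of $\bP^1$'s of unbounded length. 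The boundedness results in \cite{alexeev1994boundedness-and-ksp-2} are global statements (bounded family of pairs with $(K_X+B)^2$ bounded); they do not give a finite list of local germs, and no such finite list exists.

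What the paper shows instead is that even though the germs vary in an unbounded family, the \emph{Cartier index} near $p$ takes only finitely many values, and this follows without any boundedness of germs. After the dlt blowup, $K_Y+B_Y$ is Cartier in the smooth locus of $Y$ and trivial along the interior components of the chain $E$; the only obstruction to Cartier-ness lies at the singular points $Q_k$ of $Y$ on the two end components of $E$. Adjunction to an end component gives $\sum b'_k = 1$ or $2$ with $b'_k\in\cC'\setminus\{1\}$. Because $\cC'$ is a DCC set, this equation has only finitely many solutions: the number of summands is bounded, and an infinite family of solutions would force some coordinate to be strictly increasing, contradicting the fixed sum. For each admissible $b'_k=1-\frac{1-\sum n_jb_j}{n}$, the DCC condition on $\cC$ similarly bounds $n$, $n_j$, and $b_j$ to a finite set, and by Lemma~\ref{lem:different} this $n$ is precisely the index of $Q_k\in Y$. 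Taking $m(\cC)$ to be a common multiple of these finitely many indices (and of the denominators of the finitely many relevant $b_j$) makes $m(\cC)(K_Y+B_Y)$ integral and Cartier along $E$, and pushing down via the base point free theorem yields the statement for $K_X+B$ near $p$. So the fix is to replace the appeal to boundedness of germs by the direct finiteness argument on $\cC'$.
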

\begin{proof}
  Let $f\colon Y\to X$ be a dlt blowup, as in 
  \eqref{def:dlt-blowup}.
  Let $B_Y=E+\Delta$ be the decomposition into
  the integral and fractional parts. We have $E\ne 0$ by assumption;
  write $E=\sum E_i$. 
  By Shokurov's connectedness theorem
  \cite[Lemma 5.7]{shokurov1992three-dimensional}, $E$ is connected.
  As in Section~\ref{sec:preliminaries}, the adjunction to $E$ gives
  \begin{displaymath}
    0 \equiv (K_Y+ E + \Delta) |_E = K_E + \Diff_E(\Delta),
    \qquad \Diff_E(\Delta) = \sum b_k' Q_k, \ b_k'\in \cC'.
  \end{displaymath}
  It follows that
  \begin{enumerate}
  \item Either $p_a(E)=1$, $E$ is a smooth elliptic curve or a cycle
    of $\bP^1$s, and $p\notin B$. Then $K_X+B$ is Cartier near $p$.
  \item Or $E$ is a $\bP^1$ or a chain of $\bP^1$'s and $\Delta$
    intersects only the end curves. 
  \end{enumerate}

  Restricting to an end curve gives an identity $\sum b'_k =1$ or $2$,
  with $b'_k<1$, $b'_k\in \cC'$. Since $\cC'$ is a DCC set, this identity has
  only finitely many solutions. 
  For a fixed $b_k'<1$ we have $b_k'=1 - \frac{ 1- \sum n_j b_j}{n}$ and
  there are only finitely many solutions for $n, n_j, b_j$. By
  Lemma~\ref{lem:different}, $n$ is the index of the singularity $Q_k\in Y$.

  The end result is that there are only finitely many possibilities
  for $b_j$ and the Cartier indices of $K_Y+B_Y$ at the singular points over
  $p$. Since all $b_j\in\bQ$ by assumption, there exists a fixed
  multiple $m$ depending only on $\cC$ such that $m(K_Y+B_Y)$ is
  integral and Cartier. If $p$ is a point as in (i) above then near
  $p$ the divisor $K_X+B=K_X$ is already Cartier. Otherwise $X$ is klt
  at $p$. By the Base Point Free Theorem for lc surfaces it follows
  that $m(K_X+B)$ is Cartier.
\end{proof}

\begin{lem}\label{lem:m-C2}
  Let $(X,B)$ be a log surface in $\ms(\mc_2)$  and let $p\in X$ be a point
  that is an isolated nklt center of $(X,B)$. Then the divisor
  $m(K_X+B)$ is integral and Cartier near $p$ for $m=1,2,3,4$ or
  $6$.
\end{lem}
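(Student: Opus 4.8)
The plan is to run the proof of Proposition~\ref{prop:fixed-multiple} with $\cC=\cC_2$ and make the resulting multiple explicit, using $\cC_2'=\cC_2$ together with Corollary~\ref{cor:diff-C2}. As there, I would take a dlt blowup $f\colon Y\to X$ extracting only divisors of discrepancy $-1$, write $B_Y=E+\Delta$ with $E=B_Y^{=1}$, and work in a neighbourhood of $p$; there $E$ is connected, nonempty and $f$-exceptional, and since $p$ is an \emph{isolated} nklt center one has $p\notin\lfloor B\rfloor$, so $\Delta=f_*\inv(B^{<1})$ near $f\inv(p)$. Adjunction $0\equiv(K_Y+E+\Delta)|_E=K_E+\Diff_E(\Delta)$, $\Diff_E(\Delta)=\sum_k b_k'Q_k$ with $b_k'\in\cC_2'=\cC_2$, leaves exactly the two cases of Proposition~\ref{prop:fixed-multiple}: either $p_a(E)=1$ and $p\notin B$, in which case $p\in X$ is simple elliptic or a cusp, $K_X$ is Cartier near $p$, and $m=1$ works; or $E$ is a $\bP^1$ or a chain of $\bP^1$s and $\Delta$ meets only the end curve(s) of $E$.

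In the second case I would restrict $K_Y+E+\Delta$ to an end curve $E_1\cong\bP^1$ of $E$. Each node of $E$ lying on $E_1$ contributes $1$ to the different, so the remaining contributions $b_k'$, coming from the points $Q_k\in\Delta\cap E_1$, satisfy $\sum_k b_k'=1$ if $E$ is a chain of length $\ge2$ and $\sum_k b_k'=2$ if $E$ is a single $\bP^1$; moreover each $b_k'=1-\tfrac1{n_k}$ with $n_k\ge2$, or $b_k'=1$. A point with $b_k'=1$ can only come from two fractional components of $\Delta$, each of coefficient $\tfrac12$, crossing $E_1$ transversally (a single $\tfrac12$-component tangent to $E_1$, or one meeting $E_1$ with higher multiplicity, is not log canonical), and such a point forces Cartier index $2$. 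A short enumeration then gives: $\sum(1-\tfrac1{n_k})=1$ forces the multiset $\{n_k\}$ to be $\{2,2\}$, and $\sum(1-\tfrac1{n_k})=2$ forces it to be one of $\{2,3,6\}$, $\{2,4,4\}$, $\{3,3,3\}$, $\{2,2,2,2\}$. In every case all the $n_k$ divide a single $m:=\operatorname{lcm}\{n_k\}\in\{2,3,4,6\}$; note that when $E$ is a chain the interior components carry no fractional different and lie in the smooth locus of $Y$, so $m$ is governed by the two end curves and equals $2$.

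To conclude I would argue, exactly as at the end of the proof of Proposition~\ref{prop:fixed-multiple}, that this $m$ already works. By Corollary~\ref{cor:diff-C2} (and its proof), at every singular point $Q_k$ of $Y$ on $E$ the divisor $n_k(K_Y+B_Y)$ is integral and Cartier near $Q_k$, where $b_k'=1-\tfrac1{n_k}$; at a point where $\Delta$ meets $E$ at a smooth point of $Y$ the analogous statement holds with $n_k$ the denominator of $b_k'$; and at a node of $E$ the pair $(Y,B_Y)$ is simple normal crossing — the node is an nklt center, hence lies outside the finite non-snc locus in the definition of dlt — so $Y$ is smooth there and $K_Y+B_Y$ is Cartier. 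Hence $m(K_Y+B_Y)=f^*(m(K_X+B))$ is Cartier in a neighbourhood of $f\inv(p)$; since $X$ is klt at $p$ (a contraction of a chain of $\bP^1$s) the Base Point Free Theorem for lc surfaces descends this to show that $m(K_X+B)$ is Cartier near $p$, with $m\in\{1,2,3,4,6\}$.

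The main obstacle is the bookkeeping in the second case: one must check that the fractional different coefficients (and the exceptional $b_k'=1$ points) are the \emph{only} possible sources of a nontrivial Cartier index — this is where Corollary~\ref{cor:diff-C2} and the snc-ness of dlt pairs along their lc centers do the real work — and that these points never force an index larger than the values $2,3,4,6$ coming from a single configuration. The rest is the now completely explicit enumeration of the equations $\sum(1-\tfrac1{n_k})\in\{1,2\}$ over $\cC_2$.
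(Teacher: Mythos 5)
Your proof follows essentially the same route as the paper's (a dlt blowup, adjunction to the end curves of $E$ to produce the identity $\sum(1-\tfrac1{n_k})\in\{1,2\}$ with its five Platonic solution sets, and Corollary~\ref{cor:diff-C2} to bound the Cartier index by $\operatorname{lcm}\{n_k\}$), so the approach is the same and the conclusion is correct. One small inaccuracy in a vacuous side case: a coefficient-$\tfrac12$ component of $\Delta$ tangent to $E_1$ at order $2$ \emph{is} log canonical (the codiscrepancy reaches exactly $1$ two blowups up), not non-lc as you claim; what actually rules out every $Q_k\in E\cap\Delta$ with $b_k'=1$ is the dlt property of $(Y,B_Y)$ — such a $Q_k$ would be a $0$-dimensional nklt center lying at a non-snc or singular point, which is forbidden — and this is what the paper tacitly uses when it writes $b'_k<1$.
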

\begin{proof}
  As in the proof of the above proposition, for a dlt blowup
  $Y\to X$ on an exceptional divisor $E=\sum E_i$ there are several
  singularities of index $m_j$ and an identity of the form
  $\sum (1-\frac1{n_j})=1$ or $2$.  As it is well known, the only
  solutions $(n_j)$ to this identity are $(2,2)$, $(3,3,3)$,
  $(2,4,4)$, $(2,3,6)$, $(2,2,2,2)$. By \eqref{cor:diff-C2}, the
  indices $m_j$ divide $n_j$: $n_j=m_jn'_j$ and the respective
  coefficients of $B$ are $b_j=1-\frac1{n'_j}$.  If $m$ is the GCD of
  these numbers (i.e.  $m=1,2,3,4$ or $6$) then $m(K_Y+B_Y)$ is
  integral and Cartier. So is $m(K_X+B)$ by the Base Point Free
  Theorem.
\end{proof}

Putting this together we get:

\begin{thm}\label{thm:C2-lower-bound}
  For any log surface $(X,B)\in\ms(\mc_2)$ with ample
  $K_X+B$ and $\nklt(X,B)\ne~\emptyset$ one has
  \begin{displaymath}
    (K_X+B)^2 \ge \frac{1}{7^2\cdot 42^2}
  \end{displaymath}
\end{thm}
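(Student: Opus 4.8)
The plan is to split into two cases according to whether $\lfloor B\rfloor$ is zero, with the first case essentially immediate. If $\lfloor B\rfloor\ne 0$ then, since every coefficient of $B$ lies in $\cC_2\subset(0,1]$, some component of $B$ has coefficient exactly $1$; hence $(X,B)$ is one of the pairs competing in Definition~\ref{defn:v1}, and Theorem~\ref{thm:v1} gives $(K_X+B)^2\ge v_1(\cC_2)=\frac1{42^2}\ge\frac1{7^2\cdot42^2}$. So the real content is the case $\lfloor B\rfloor=0$.

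Assume now $\lfloor B\rfloor=0$. Then no nklt center of $(X,B)$ can be a curve, since a curve nklt center would be a component of $B$ with coefficient $\ge1$; therefore $\nklt(X,B)$ is a nonempty finite set of points, and I fix one of them, $p\in X$, which is then an isolated nklt center. By Lemma~\ref{lem:m-C2} there is an integer $m\in\{1,2,3,4,6\}$ such that $m(K_X+B)$ is integral and Cartier near $p$. Feeding this into Proposition~\ref{prop: nonvan}: if $m\ge2$ then $H^0(X,\lfloor m(K_X+B)\rfloor)\ne0$, and if $m=1$ then $H^0(X,\lfloor 2(K_X+B)\rfloor)\ne0$; in either case there is $m'\in\{2,3,4,6\}$ with $H^0(X,\lfloor m'(K_X+B)\rfloor)\ne0$. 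Applying Theorem~\ref{thm: low bnd} with this $m'$ and with $\cC=\cC_2$, and using $t_{m'}(\cC_2)=1$ from Lemma~\ref{lem:t_m} together with $v_1(\cC_2)=\frac1{42^2}$ from Theorem~\ref{thm:v1}, I get
$$(K_X+B)^2=\vol(K_X+B)\ \ge\ \frac{v_1(\cC_2)}{(1+m')^2}\ \ge\ \frac{1}{7^2\cdot 42^2},$$
the last inequality because $m'\le6$.

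Conceptually there is no real obstacle left: the estimate is just the composition of the nonvanishing input (Proposition~\ref{prop: nonvan}), the bounded Cartier index near an isolated nklt point (Lemma~\ref{lem:m-C2}), the volume comparison of Theorem~\ref{thm: low bnd} (which itself rests on $v_1(\cC_2)=\frac1{42^2}$), and the identity $t_m(\cC_2)\equiv1$. The extremal value $\frac1{7^2\cdot42^2}$ is attained only in the worst case $m'=6$, i.e. when the dlt blowup over $p$ produces the $(2,3,6)$-type adjunction relation on its exceptional curve; every other configuration gives a strictly better bound (already $m'=2$ yields $\frac1{9\cdot42^2}$). The only points needing a little care are peeling off the case $\lfloor B\rfloor\ne0$ at the outset, and, when $m=1$, remembering to plug $2(K_X+B)$ rather than $K_X+B$ into Theorem~\ref{thm: low bnd}.
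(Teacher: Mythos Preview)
Your proof is correct and follows the same approach as the paper's: split off the case $\lfloor B\rfloor\ne0$ via Theorem~\ref{thm:v1}, and in the remaining case combine Lemma~\ref{lem:m-C2}, Proposition~\ref{prop: nonvan}, Lemma~\ref{lem:t_m}, and Theorem~\ref{thm: low bnd}. If anything, you are more explicit than the paper, which omits mention of Proposition~\ref{prop: nonvan} and of the $m=1$ subcase; your handling of these (passing to $m'=2$ when $m=1$) is exactly right.
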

\begin{proof}
  If $\lfloor B\rfloor\ne0$ then $(K_X+B)^2\ge \frac1{42^2}$ by
  Theorem~\ref{thm:v1}. Otherwise, there is an isolated nklt
  center. Then we apply Theorem~\ref{thm: low bnd} and use that
  $t_m = 1$ by Lemma~\ref{lem:t_m} and $m\le6$ by
  Lemma~\ref{lem:m-C2}. 
\end{proof}

We conclude by completing the proof of Theorem~\ref{thm:bounds}.

\begin{proof}[Proof of Theorem~\ref{thm:bounds}]
  (ii) The lower bound is proved in
  Theorem~\ref{thm:C2-lower-bound}. The upper bound is obtained by
  applying Theorem~\ref{thm: main} to
  \cite[Example~5.3.1]{kollar1994log-surfaces}.

  (iii) The lower bound follows from that for $\cC_2$. The upper bound
  is obtained by applying Theorem~\ref{thm: main} to the example in
  \cite{alexeev2016open-surfaces} of a strictly log canonical surface
  $X$ with ample $K_X$ and $K_X^2=\frac1{462}$.

  (i) For $\cC_2$, this follows from Corollary~\ref{cor: =}. For $\cC_0$, and
  $\cC_1$, using \eqref{cor: rat}, we have to rule out the possibility
  that the minimum is achieved on a log surface $(X,B)$ with an
  \emph{inaccessible} nklt locus, i.e. one that has 
  \begin{enumerate}
  \item[(a)] either a simple elliptic singularity,
  \item[(b)] or a smooth component $B_j$ of $B$ lying in a smooth part.
  \end{enumerate}
  However, in the case (a) one has $(K_X+B)^2\geq \frac{1}{143}$ by
  \cite{Liu2017minimal-volume}. And in the case (b)
  $(K_X+B)B_0\in \bN$, so by \cite{kollar1994log-surfaces}
  \begin{displaymath}
    (K_X+B)^2\ge \delta_1(\cC_2)(K_X+B)B_0 \ge \delta_1(\cC_2),  
  \end{displaymath}  
  where $\delta_1 = \sup(t)$ such that $K_X+B-t B_0$ is big. One has
  $\delta_1(\cC_2)=\frac1{42}$ by
  \cite[Thm. 5.3]{kollar1994log-surfaces}.  The existing limit point
  $\frac1{462}$ is smaller.
\end{proof}

\begin{rmk}
\begin{enumerate}[leftmargin=*]
\item By Lemma~\ref{lem:m-C2}, we can take $m(\cC_2) = 12$. If
  $(X, B)\in \ms(\mc_2)$ has ample $K_X+B$ and at least two isolated
  nklt centers, then one has
  $\dim H^0(X, \lfloor 12(K_X+B)\rfloor)\geq 2$ by Remark~\ref{rmk:
    nonvan}. In this case, one expects a better lower bound on
  $\vol(K_X+B)$, cf.~\cite[Theorem 5.1]{Bla95b}.
\item For any $\epsilon >0$, the set of 
numbers in $\kk(\mc_2)$ that are less than $\frac{1}{86436}-\epsilon$
is finite.
\item Koll\'ar \cite[Section 2]{kollar1994log-surfaces} gives a log canonical surface
  $(X, B)\in \ms(\mc_2)$ such that $K_X+B$ is ample and
  $(K_X+B)^2=\frac{1}{42^2\cdot 43^2}=\frac{1}{3261636}$.
  On the other hand, the current record
  for the  smallest (positive) volume of 
  $(X, B)\in \ms(\mc_0)$ and $\ms(\mc_1)$
  is $\frac{1}{48983}$ (\cite{alexeev2016open-surfaces}).
\end{enumerate}
\end{rmk}

\section{Iterated accumulation points of volumes}\label{sec:iterated}
\begin{defn}
  Let $V\subset \RR$ be a non-empty subset. A point $v\in \RR$ is
  a \emph{$k$-iterated accumulation point} of $V$ if
  $v\in \acc^k (V)$, where $\acc^k (V)\subset \RR$ is defined
  inductively as the set of accumulation points of $\acc^{k-1}(V)$, and
  $\acc^0(V):=V$.
\end{defn}
Obviously, $\acc^1(V)=\acc(V)$ is exactly the set of accumulation
points of $V$; $\acc^{k}(V)\subset \acc^{k-1}(V)$ for $k\geq 2$.
\begin{thm}
  For any set $\cC\subset (0,1]$ and any $k\in\bN$, one has
  $\acc^k(\bK^2(\mc))\ne\emptyset$. 
\end{thm}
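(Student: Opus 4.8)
The plan is to prove by induction on $k$ that $\acc^k(\bK^2(\cC_0))\ne\emptyset$; since $\bK^2(\cC_0)\subseteq\bK^2(\cC)$ for any $\cC\subseteq(0,1]$ and accumulation points are monotone under inclusion, $\acc^k(\bK^2(\cC))\ne\emptyset$ follows. So I work entirely with $\cC_0=\emptyset$, i.e. with volumes $K_X^2$ of log canonical surfaces $X$ with big $K_X$. The base case $k=0$ is trivial. For the inductive step, the idea is to produce, starting from a surface whose volume $v$ is a $(k-1)$-iterated accumulation point, a new surface whose volume is a $k$-iterated accumulation point. The natural tool is Theorem~\ref{thm: main}(ii.b): any log canonical surface $(X,B)$ with ample $K_X+B$ and a nonempty accessible nklt center has volume in $\acc(\bK^2(\cC))$. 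Iterating this construction $k$ times should push a given volume up the accumulation hierarchy.

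Concretely, I would first fix a single surface that already witnesses one level of accumulation with an accessible nklt center still present — for instance the surface on $\PP^1\times\PP^1$ from Example~\ref{ex: Z}, or a suitable blowup of $\FF_1$, whose boundary is a union of rulings and sections meeting transversally, so that it has an accessible nklt center (a node of $\lfloor B\rfloor$). The key structural point is that the construction in Theorem~\ref{thm: nklt}/Corollary~\ref{cor: nklt2} (blowing up a point $B_1\cap B_2$ repeatedly and perturbing the coefficients on the exceptional chain) does not merely produce \emph{one} accumulating sequence: it can be performed in families. More precisely, I would argue that for each $k$ there is a log canonical surface $(X_k,B_k)$ with ample $K_{X_k}+B_k$, boundary with $\cC_0\cup\{1\}$ coefficients, and with $k$ "independent" accessible nklt centers $p_1,\dots,p_k$ lying at distinct nodes of $B_k$; applying the nklt-perturbation construction at $p_1,\dots,p_{k}$ with independent parameters $s_1,\dots,s_k$ yields a $k$-parameter family of log surfaces whose volumes are $(K_{X_k}+B_k)^2-\sum_i c_i^2/s_i$ for positive constants $c_i$. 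Letting $s_1\to\infty$ first produces an accumulation point; then letting $s_2\to\infty$ gives a $2$-iterated accumulation point among those; and so on. Carrying this out carefully shows $(K_{X_k}+B_k)^2\in\acc^k(\bK^2(\cC_0))$.

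The cleanest way to organize the induction is probably the following. Suppose $v\in\acc^{k-1}(\bK^2(\cC_0))$ is realized, via the "only if" direction already proved, as $v=(K_X+B)^2$ for $(X,B)\in\cS(\cC_0\cup\{1\})$ with ample $K_X+B$; one needs in addition that $(X,B)$ can be taken to have an accessible nklt center, so that Theorem~\ref{thm: main}(ii.b) applies and $v\in\acc(\bK^2(\cC_0))$ with the stronger conclusion that it is a \emph{new} accumulation point limit of volumes each of which is itself a $(k-1)$-iterated accumulation point. Here the honest bookkeeping is: in Corollary~\ref{cor: nklt2} the approximating surfaces $(X^{(s)},B^{(s)})$ again have ample log canonical divisor, $B^{(s)}$ the strict transform of $B$, and — if $B$ had enough accessible structure — one can arrange that each $(X^{(s)},B^{(s)})$ \emph{still} has an accessible nklt center, so the inductive hypothesis applies to each of them and each $\vol(K_{X^{(s)}}+B^{(s)})\in\acc^{k-1}(\bK^2(\cC_0))$. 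Then $v=\lim_s\vol(K_{X^{(s)}}+B^{(s)})\in\acc^k(\bK^2(\cC_0))$.

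The main obstacle is exactly this persistence: the perturbation in Theorem~\ref{thm: nklt} destroys the accessible nklt center it acts on (by design — it makes the coefficients on the exceptional chain drop below $1$ and the node is smoothed out on the log canonical model). So a single accessible center gives only one level of accumulation. The fix is to build surfaces with \emph{many} disjoint accessible nklt centers from the start and consume them one at a time, which is why I would prove the refined statement with $k$ independent centers. Producing such $(X_k,B_k)$ explicitly is easy — e.g. take $\PP^1\times\PP^1$ with many horizontal and vertical rulings so that there are many nodes of $\lfloor B\rfloor$, each an accessible center, and check ampleness of $K+B$ and that acting at one node leaves the others untouched — and then the iterated-limit argument is a routine $k$-fold application of the $1/s$ volume estimate. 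I expect the only delicate points to be verifying that the successive log canonical models keep enough accessible nodes (one must act at nodes that survive the previous contractions, which is automatic if the chosen nodes are pairwise "far apart" on the surface) and that all volumes stay positive, which is handled exactly as in the proof of Theorem~\ref{thm: nklt} by taking the parameters large.
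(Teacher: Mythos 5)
Your proposal is correct and takes essentially the same route as the paper: start from a single log surface with many disjoint accessible nklt centers arising as nodes of a reduced line arrangement (the paper uses $\PP^2$ with $n$ lines in general position and the $n-1$ nodes along one fixed line, rather than $\PP^1\times\PP^1$ with rulings), apply the perturbation of Theorem~\ref{thm: nklt} simultaneously at all of them with independent parameters $s_1,\dots,s_{n-1}$ to get volumes $(n-3)^2-\sum_j 1/s_j$, and take iterated limits $s_1\to\infty$, then $s_2\to\infty$, etc. The paper additionally observes that each intermediate volume already lies in $\acc^1(\bK^2(\cC_0))$ because the perturbed surfaces retain an accessible nklt center, which gains one extra level of iteration ($\acc^n$ rather than $\acc^{n-1}$), but this is only a bookkeeping refinement of your argument.
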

\begin{proof}
  Consider $(X,B)=(\PP^2,\sum_{j=1}^n L_j)$ with $n\geq 4$
where
  the $L_j$'s are $n$ distinct lines in general position. Then
  $(X, B)$ is a log canonical surface with ample $K_X+B$ and
  $(K_X+B)^2=(n-3)^2$. 
  Let $p_j\in L_j\cap L_n$ for $1\leq j\leq n-1$ be the $n-1$ nodes of
  $B$ on $L_n$. We apply the construction of Theorem~\ref{thm: nklt}
  to all of these points at once.

  Let $h\colon Z=Y^{(s_1,\dotsc, s_{n-1})} \to \bP^2$ be obtained by blowing
  up $s_j$ times at the point $p_j$ and its preimages on the the strict transforms of $L_j$,
  $1\le j\le n-1$. As in Theorem~\ref{thm: nklt}, we define the
  divisor $B_Z$ by $K_Z+B_Z = h^*(K_X+B)$ and the
  divisor $B_Z'$ by 

  \begin{displaymath}
    B_Z' 
    = B_Z - \sum_{j=1}^{n-1} \sum_{i=1}^{s_j} \frac{i}{s_j} E_{j,i}
    = h_*\inv B + \sum_{j=1}^{n-1} \sum_{i=1}^{s_j} \frac{s_j-i}{s_j} E_{j,i}
  \end{displaymath}

  By construction, $K_Z+B_Z'$ intersects the curves $E_{j,i}$
  non-negatively, and one has $(K_Z+B_Z') h_*\inv L_j = n-4$ for
  $1\le j\le n-1$ and $n-3-\sum\frac1{s_j}$ for $j=L_n$. Thus, for
  $s_j\gg0$ the divisor $K_Z + B_Z'$ is nef of volume
  \begin{displaymath}
    (K_Z+B_Z')^2 = (n-3)^2 - \sum_{j=1}^k \frac1{s_j}
  \end{displaymath}
  and on the log canonical model of $(Z,B_Z')$ the curve $h_*\inv L_n$
  is not contracted and $(Z,B_Z')$ has a nonempty accessible nklt locus.

  Each of the above numbers is an accumulation point of $\bK^2(\cC_0)$
  by Theorem~\ref{thm: main}.  Considering the sequences with
  $s_1\gg s_2\gg \dotsc s_{n-1}\gg0$, we see that
  $(n-3)^2$ is in $\acc^{n}(\cC_0)$.  Now, for any $n\ge\max(k,4)$ one has
  $\acc^k(\cC)\supset\acc^n(\cC_0)\ne\emptyset$.
\end{proof}
\bibliographystyle{amsalpha}
%\bibliography{va} 

\def\cprime{$'$}
\providecommand{\bysame}{\leavevmode\hbox to3em{\hrulefill}\thinspace}
\providecommand{\MR}{\relax\ifhmode\unskip\space\fi MR }
% \MRhref is called by the amsart/book/proc definition of \MR.
\providecommand{\MRhref}[2]{%
  \href{http://www.ams.org/mathscinet-getitem?mr=#1}{#2}
}
\providecommand{\href}[2]{#2}

\end{document}